\newtheorem{lemma}{Lemma}[section]
\newtheorem{proposition}[lemma]{Proposition}
\newtheorem{theorem}[lemma]{Theorem}
\newtheorem{corollary}[lemma]{Corollary}
\newtheorem{definition}[lemma]{Definition}
\newtheorem{example}[lemma]{Example}
\theoremstyle{definition}
\newtheorem{remark}[lemma]{Remark}
\newcommand{\mpr}{\mathbb P}
\newcommand{\mr}{{\mathbb R}}
\newcommand{\R}{{\mathbb R}}
\newcommand{\Rd}{{\mr^d}}
\newcommand{\dx}{{\rm d}x}
\newcommand{\dy}{{\rm d}y}
\newcommand{\dz}{{\rm d}z}
\newcommand{\dt}{{\rm d}t}
\DeclareMathOperator{\dist}{dist}
\newcommand{\dw}{{\rm d}w}
\newcommand*\oU{\overline{U}}
\newcommand*\pord{\int_{\mR^d}}
\newcommand*\mE{\mathbb{E}}
\newcommand*\mR{\mathbb{R}}
\newcommand*\me{\mathrm{e}}
\newcommand{\form}[3]{\ifx\\#3\\
						\mathcal{#1}_{#2}
					\else
						\mathcal{#1}_{#2}^{(#3)}
					\fi}
\newcommand{\spc}[3]{\ifx\\#3\\
							\mathcal{#1}_{#2}
						\else
							\mathcal{#1}_{#2}^{\, #3}
						\fi}
\newcommand*\epd{\form{E}{D}{p}}
\newcommand*\hpd{\form{H}{D}{p}}
\newcommand*\vd{\spc{V}{D}{}}
\newcommand*\wdp{\spc{W}{D}{p}}
\newcommand*\ydp{\spc{Y}{D}{p}}
\newcommand*\vdp{\spc{V}{D}{p}}
\newcommand*\xdp{\spc{X}{D}{p}}
\DeclareMathOperator{\sgn}{sgn}
\newcommand*\DN{{D\! N}}
\newcommand*{\ar}{\textcolor{black}}
\title{
Nonlinear nonlocal
Douglas identity}
\author[K.~Bogdan]{Krzysztof Bogdan}
\address{Faculty of Pure and Applied Mathematics, Wroc\l{}aw University of Science and Technology, Wyb. Wyspia\'nskiego 27, 50-370 Wroc\l{}aw, Poland.}
\email{krzysztof.bogdan@pwr.edu.pl}
\author[T.~Grzywny]{Tomasz Grzywny}
\address{Faculty of Pure and Applied Mathematics, Wroc\l{}aw University of Science and Technology, Wyb. Wyspia\'nskiego 27, 50-370 Wroc\l{}aw, Poland.}
\email{tomasz.grzywny@pwr.edu.pl}
\author[K.~Pietruska-Pa\l uba]{Katarzyna Pietruska-Pa\l uba}
\address{Institute of Mathematics, University of Warsaw, ul. Banacha 2, 02-097 Warsaw, Poland.}
\email{kpp@mimuw.edu.pl}
\thanks{K.B. and K.P.-P. were supported by the NCN grant 2018/31/B/ST1/03818. T.G. was supported by the NCN grant 2017/27/B/ST1/01339. A.R. was supported by the NCN grant 2015/18/E/ST1/00239.}
\author[A.~Rutkowski]{Artur Rutkowski}
\address{Faculty of Pure and Applied Mathematics, Wroc\l{}aw University of Science and Technology, Wyb. Wyspia\'nskiego 27, 50-370 Wroc\l{}aw, Poland.}
\email{artur.rutkowski@pwr.edu.pl}
\subjclass[2020]{ 31C05, 31C45, 46E35 (primary), 35A15, 60G51 (secondary)}
\keywords{harmonic function, Douglas identity, trace theorem}
\begin{document}
\date{\today}
\begin{abstract}
We give Hardy--Stein and Douglas identities
for nonlinear nonlocal Sobolev--Bregman integral forms with unimodal L\'evy measures. We prove that the corresponding Poisson integral defines an extension operator for the Sobolev--Bregman spaces. \ar{As an application, we obtain the boundedness of the Dirichlet-to-Neumann operator on weighted $L^p$ spaces.}
We also show that the Poisson integrals
 are quasiminimizers of the Sobolev--Bregman forms.
\end{abstract}
\maketitle
\section{Introduction}
In 1931 J. Douglas \cite{MR1501590}
established a connection of the energy of the
harmonic function $u$ on the unit disc $B(0,1)$ with the ``energy'' of its boundary trace $g$, regarded as a function on $[0,2\pi)$:
\begin{equation}\label{eq:Douglas-classical}
\int_{B(0,1)}|\nabla u(x)|^2 {\rm d}x = \frac{1}{8\pi} \iint_{[0,2\pi)\times[0,2\pi)}\frac{(g(\eta)-g(\xi))^2}{\sin^2((\eta-\xi)/2)}{\rm d}\eta{\rm d}\xi.
\end{equation}
 The formula arose in the study of the so-called Plateau problem --- the problem of existence of minimal surfaces posed by J.-L. Lagrange.
The identity holds true provided that
the left-hand side is finite --- for details see, e.g., Chen and Fukushima \cite[(2.2.60)]{MR2849840}.
Thus, under the integrability condition, \eqref{eq:Douglas-classical}
is valid
for the
solutions of the Dirichlet problem,
	\[\begin{cases}
	\Delta u=0&{\rm in}\ B(0,1),\\
	u=g&{\rm in}\ \partial B(0,1).\end{cases}\]

In our paper we propose a variant of \eqref{eq:Douglas-classical}, which we call nonlinear nonlocal Douglas identity.
The term ``nonlocal''
means that the Laplace operator $\Delta$ above is replaced by a nonlocal operator $L.$
Specifically, we adopt the following setting.
Let $d=1,2,\ldots$. Suppose that the function $\nu\colon [0,\infty)\to (0,\infty]$ is nonincreasing and, with a slight abuse of notation, let $\nu(z)=\nu(|z|)$ for $z\in \Rd$.
In particular, $\nu$ is symmetric, i.e., $\nu(z)=\nu(-z)$, $z\in \mR^d$. Assume further that
\begin{equation}\label{eq:Lmc}
\int_{\R^d} \nu(z)\,\dz=\infty \quad \mbox{and}\quad
\int_{\Rd} \left(|z|^2\wedge 1\right)\nu(z)\,\dz<\infty.
\end{equation}
Thus, $\nu$ is a strictly positive density function of an infinite isotropic unimodal L\'{e}vy measure on $\Rd$ (in short, $\nu$ is unimodal).
For $u\colon \Rd\to \R$ and $x\in \Rd$ we let
\begin{eqnarray}\label{eq:L-def}
Lu(x)
&=&\lim_{\epsilon\to 0^+} \int_{|x-y|>\epsilon} (u(y)-u(x))\nu(x,y)\,\dy\\
&=&\lim_{\epsilon\to 0^+} \tfrac12 \!\int_{|z|>\epsilon} \!\!\!(u(x+z)+u(x-z)-2u(x))\nu(z)\,\dz.\nonumber
\end{eqnarray}
Here, $\nu(x,y):=\nu(y-x)$, and the limit exists, e.g., for
$u$ in $C_c^\infty (\mathbb R^d)$, the smooth functions with compact support.
Operators of the form \eqref{eq:L-def} are called nonlocal, because the value of $L \phi(x)$ also depends on the values of $\phi$ outside of a neighborhood of $x$.
Furthermore, the operators satisfy the maximum principle, meaning that if $\phi(x_0)=\sup\{\phi(x): x\in \Rd\}$, then $L\phi(x_0)\le 0$. It is well known that such operators may be used to describe transportation of mass, charge, etc. in elliptic and parabolic equations; especially to pose boundary-value problems.

To our nonlocal setting we bring a judicious way of measuring the smoothness of functions
for a given set. Let $D\subset\Rd$ be open.
For the sake of gradual introduction we first consider the quadratic form
\begin{equation}\label{eq:SVf}
\form{E}{D}{}[u]=\;\;\;\;\tfrac{1}{2}\!\!\!\!\!\!\!\!\!\iint\limits_{\R^d\times\R^d\setminus D^c\times D^c}
(u(x)-u(y))^2\nu(x,y)\,\dx\dy.
\end{equation}
Such forms appeared in Servadei and Valdinoci \cite{MR2879266, MR3002745},
where the set $\R^d\times\R^d\setminus D^c\times D^c$ was denoted $Q$, then in Ros-Oton \cite[(3.1)]{MR3447732} and Dipierro, Ros-Oton and Valdinoci \cite[p. 379]{MR3651008}.
Similar forms were also used in Felsinger, Kassmann and Voigt \cite[Definition 2.1 (ii)]{MR3318251}.
$\form{E}{D}{}$ is the energy functional of the nonlocal Dirichlet problem
\begin{equation}\label{eq:Dp}
\left\{
\begin{array}{ll}
Lu=0 & \mbox{ in } D,\\
u=g & \mbox{ on } D^c,
\end{array}\right.
\end{equation}
see \cite{MR2879266, MR3002745} and
Bogdan, Grzywny, Pietruska-Pa\l{}uba and Rutkowski \cite{MR4088505}.
It should be noted that $\form{E}{D}{}$ is better than the vanilla form $\form{E}{\mR^d}{}$ for solving \eqref{eq:Dp}, because it
allows for more general
external conditions $g$ due to the restriction of integration in \eqref{eq:SVf} to $Q=\R^d\times\R^d\setminus D^c\times D^c$, cf. \cite[p. 39]{MR4088505}. Therefore $\form{E}{D}{}$ constitutes an important step forward
in nonlocal variational problems;
we refer the reader to \cite{MR4088505} for more details and to \cite{MR2879266, MR3002745} for applications to nonlinear equations. We note that our results
also have consequences for the Dirichlet problem for $L$ on $D$ when $\form{E}{\mR^d}{}$ is used, see Corollary~\ref{cor:nnDia} below.

Numerous papers study the nonlocal
Dirichlet problem by variational methods for nonlocal
operators --- in the present setting we should note \cite{MR3318251}, \cite{MR3447732}, and Rutkowski \cite{MR3738190}.
It is known for many L\'evy and L\'evy-type kernels $\nu$ and bounded $D$ \cite{MR3318251, MR3738190}, \cite[Section 5]{MR4088505} that a unique weak solution of \eqref{eq:Dp} exists provided that $g\colon D^c\to \R$ can be extended to a function $u\in L^2(D)$ from the Sobolev class
\begin{equation}\label{eq:dSs}
\vd
:=\{u\colon \R^d\to \R\ |\ \form{E}{D}{} [u]<\infty\}.
\end{equation}
It is therefore important to determine conditions on $g$ that allow for such an extension
--- in other words --- to determine the {\em trace space}, say, $\spc{X}{D}{}$, of $\vd$. We note in passing that by \cite[Lemma 3.4]{MR4088505},
the functions from $\vd$ are automatically square integrable on $D$.
For the fractional Laplacian $\Delta^{\alpha/2}:=-(-\Delta)^{\alpha/2}$ (see Subsection~\ref{sec:pot-theo-notions} for a definition) a solution to this problem was proposed by Dyda and Kassmann \cite{DYDA2018108134} by using the Whitney decomposition and the method of reflection. In fact, \cite[Theorem~3]{DYDA2018108134}
concerns general \textit{$p$-increments}, i.e., $|u(x)-u(y)|^p$ with $p\ge 1$.

In
\cite{MR4088505} we resolved the extension and trace problem
for $p=2$ for a wide class of unimodal L\'evy operators
by a different approach
based on the (quadratic) nonlocal Douglas identity.
Namely, \cite[Theorem 2.3]{MR4088505} asserts that the trace space $\spc{X}{D}{}$
consists of functions $g\colon D^c\to D$ for which the following
form on $D^c$ is finite,
 \[\form{H}{D}{}[g]:= \tfrac 12\iint\limits_{D^c\times D^c} (g(z)-g(w))^2\gamma_D(z,w)\,\dz\dw.\]
Here and afterwards we call
\begin{align*}
\gamma_ D(w,z) &
:= \int_ D \int_ D \nu(w,x)G_ D(x,y) \nu(y,z)\,\dx\dy,\quad w,z\in D^c,
\end{align*}
the {\em kernel of interaction via $D$}, or {\em interaction kernel}, and $G_D$ is the Green function of $L$ for $D$;
see Section \ref{sec:pot-theo-notions} for details. \ar{We note that $\gamma_D$ is the nonlocal normal derivative  of the Poisson kernel of $L$, see \eqref{e.dnd} and \eqref{eq:rg2} below, similarly as the kernel in the classical Douglas identity, see Bogdan, Fafuła, and Rutkowski \cite[Subsection~2.3]{2022arXiv220707431B}.}
The nonlocal Douglas identity of \cite{MR4088505} can be stated as follows,
\begin{equation}\label{eq:ident-l2}
\form{E}{D}{}[u]= \form{H}{D}{}[g],
\end{equation}
where $g\colon D^c\to \R$, $\form{H}{D}{}[g]<\infty$, and $u= P_D[g]$ is the Poisson integral of $g$, see Section \ref{sec:pot-theo-notions}. Notably, $P_D[g]$ is a harmonic function of $L$, so
the identity \eqref{eq:ident-l2} explains the energy of a harmonic function by the energy of its external values.
In the language of Chen and Fukushima \cite[Chapter 5]{MR2849840}, the right-hand side of \eqref{eq:ident-l2} is the \textit{trace form} and $\gamma_D(z,w)\,\dz\dw$ is the \textit{Feller measure} for $(\form{E}{\mR^d}{},\spc{V}{\mR^d}{})$ on $D^c$, but the extension and trace problem for $\vd$ were not investigated in \cite{MR2849840}.
\ar{We also note that Jacob and Schilling \cite{MR1681637} studied Douglas identities for nonlocal censored-type Dirichlet forms.}

Our present goal is to extend the nonlocal Douglas formula \eqref{eq:ident-l2} to a more general nonlinear case. The possibility of such a setting
occurred to us
owing to the recent Hardy--Stein identities of Bogdan, Dyda and Luks \cite[Theorem 2]{MR3251822}.
To this end we will use the following notion, the \textit{French power}:
$$x^{\langle\kappa \rangle}=|x|^\kappa \sgn(x),\quad x\in \R,\, \quad\kappa\in \R.$$
More precisely, $x^{\langle\kappa\rangle}=x^\kappa$ if $x>0$, $x^{\langle\kappa\rangle}=-|x|^\kappa$ if $x<0$, and $0^{\langle\kappa\rangle}=0$. For example, $x^{\langle 0\rangle}=\sgn(x)$ and $x^{\langle 2\rangle}\neq x^2$ as functions on $\R$.
In what follows we fix $1<p<\infty$, the exponent of the ``nonlinearity'' alluded to in the title of the paper.
Our nonlinear nonlocal Douglas identity is as follows:
\begin{align}\label{eq:nsDi}
\nonumber
&&\tfrac12\!\!\!\!\!\iint\limits_{\R^d\times \R^d \setminus D^c \times D^c} (u(x)^{\langle p-1\rangle}-
u(y)^{\langle p-1\rangle})(u(x)-u(y))\,\nu(x,y)\,\dx\dy\\
&&=\tfrac12
\iint\limits_{D^c\times D^c}
(g(w)^{\langle p-1\rangle}-
g(z)^{\langle p-1\rangle})(g(w)-g(z))
\gamma_D(w,z)\,\dw\dz,
\end{align}
where $u=P_D[g]$ and $g\colon D^c\to \R$. For a precise statement see Theorem \ref{th:main} and Remark~\ref{rem:pf1.8} below, since
the result hinges on suitable additional assumptions on $\nu$, $D$ and $g$.
No analogue of \eqref{eq:nsDi} seems to exist in the literature for $p\neq 2$, even for $\Delta^{\alpha/2}$. However, related nonlinear forms $\int u^{\langle p-1\rangle}Lu$ appear often in the literature concerning Markovian semigroups of operators on $L^p$ spaces, see also \eqref{eq:rp2} and \eqref{e:fag}
below. This is because for $p\in (1,\infty)$ the dual space of $L^p$ is $L^{p/(p-1)}$ and for $u\in L^p$ we have $u^{\langle p-1\rangle}\in L^{p/(p-1)}$, and $\int |u|^p=\int |u^{\langle p-1\rangle}|^{p/(p-1)}=\int u^{\langle p-1\rangle}u$. Therefore in view of the Lumer--Phillips theorem, $u^{\langle p-1\rangle}$ yields a linear functional on $L^p$ appropriate for testing \textit{dissipativity} of generators, see, e.g., Pazy \cite[Section 1.4]{MR710486}. In this connection we note that Davies \cite[Chapter 2 and 3]{MR1103113} gives some fundamental calculations with forms and powers. For the semigroups generated by local operators we refer to Langer and Maz'ya \cite{MR1694522} and Sobol and Vogt \cite[Theorem 1.1]{MR1923627}. Liskevich and Semenov \cite{MR1409835} use the $L^p$ setting to analyze perturbations of Markovian semigroups.
For nonlocal operators we refer to Farkas, Jacob and Schilling \cite[(2.4)]{MR1808433}, and to the monograph of Jacob \cite[(4.294)]{MR1873235}.

\ar{The following variant of \eqref{eq:nsDi} is also true, see \eqref{eq:defEpD}, \eqref{eq:Ep}, and \eqref{eq:sHp} below,
\begin{align}
\nonumber
&\tfrac12\!\!\!\!\!\iint\limits_{\R^d\times \R^d \setminus D^c \times D^c} (|u(y)|^p - |u(x)|^p - pu(x)^{\langle p-1\rangle}(u(y) - u(x)))\,\nu(x,y)\,\dx\dy\\
\label{e.nvDi}
&=\tfrac12
\iint\limits_{D^c\times D^c}
(|g(z)|^p - |g(w)|^p - pg(w)^{\langle p-1\rangle}(g(z) - g(w)))
\gamma_D(w,z)\,\dw\dz.
\end{align}
The integrands in \eqref{e.nvDi} come from the second order Taylor remainder of the convex function $x\mapsto |x|^p$, see \eqref{eq:Fpdef}, which leads us to the notion  of \textit{Bregman
	divergence}; see Subsection~\ref{sec:fp}, see also Bregman \cite{MR215617} for the original contribution or Sprung \cite{MR3897043}.   Bregman divergence is important for statistical learning, see Nielsen and Nock \cite{MR2598413} or Frigyik, Gupta and Srivastava \cite{MR2589887} and the references therein.  The Bregman divergence based on the power function $|x|^p$ defines the free energy functionals in the studies of Sobolev and Gagliardo--Nirenberg--Sobolev inequalities by Carrillo et al. \cite[p. 71]{MR1853037} and Bonforte, Dolbeault, Nazaret, and Simonov \cite{2020arXiv200703674B}. It also commonly appears in entropy inequalities, see, e.g., Wang \cite{MR3206685}.}

\ar{The present paper indicates further uses of Bregman divergence in PDEs. As we show in Section~\ref{sec:app}, $\gamma_D$ is the kernel of the Dirichlet-to-Neumann map \eqref{eq:dtndef}
for $L$. 
Over the last few years, Dirichlet-to-Neumann map related to nonlocal operators was intensively studied in the context of inverse problems, see, e.g., \cite{MR4078233, MR4083776, MR4237942, MR4383014}.
The forms in \eqref{eq:nsDi} are suitable for studying the Dirichlet-to-Neumann map as an operator in $L^p$. In particular, using our Douglas identity we show that the \emph{normalized} Dirichlet-to-Neumann operator \eqref{e.nDNo}  is bounded on a certain weighted $L^p$ space. Results in this direction were obtained by Vondra\v{c}ek \cite{MR4245573} and Foghem and Kassmann \cite{2022arXiv220406793F} for $p=2$, but even in this case our approach gives new insights.}

\ar{As another motivation, we mention that the form on the left-hand side of \eqref{eq:nsDi} with $D=\mR^d$ is appropriate for studying $L^p$ properties of Markovian semigroups. For instance, it was used by Bogdan, Jakubowski, Lenczewska, and Pietruska-Pa\l{}uba \cite{MR4372148} to characterize the  contractivity on $L^p(\mR^d)$ of the semigroups generated by the fractional Laplacian with Hardy-type potentials. The interested reader may find insights into the technique in \cite[Lemma 7 and Proof of Theorem 3]{MR4372148}, or even in \eqref{eq:rp2} and \eqref{e:fag} below.
}

The paper is organized as follows. Section \ref{sec:prelim} contains definitions and basic facts. Subsection \ref{sec:pot-theo-notions} introduces notions from the probabilistic potential theory and Subsection \ref{sec:fp} introduces our nonlinear setting and
novel Sobolev--Bregman spaces $\vdp$ and $\xdp$ defined by the condition of finiteness of the respective sides of \eqref{eq:nsDi}.  In \eqref{e.cFH} we collect in one place four (equivalent) approximations for our Bregman divergence, which appear in the literature.
In Section \ref{sec:harm} we generalize the Hardy--Stein identities of \cite{MR3251822} and \cite{MR4088505} to our present context. This is instrumental for the proof of the Douglas identity
in Section \ref{sec:mainthm}.
In Corollary~\ref{coro:extension} we conclude that the Poisson integral $P_D$ and the restriction to $D^c$ are the extension and trace operators between the Sobolev--Bregman
spaces.
In view toward applications in variational problems, in Section~\ref{sec:aug} we
prove the Douglas formula with the remainder for the energy of sufficiently regular
nonharmonic functions. We also show that harmonic functions are quasi-minimizers of the considered nonlinear nonlocal forms, but in general not
minimizers. \ar{In Section~\ref{sec:app} we apply our results for the analysis of the Dirichlet-to-Neumann operator in $L^p$ for $p\geq 2$.
Finally,} in
Section~\ref{sec:discussion} we give, for $p\ge 2$,
the following result for Poisson integrals $u=P_D[g]$ and the
more usual integral forms based on the
$p$-increments of functions:
	\begin{equation}
\iint\limits_{\R^d\times\R^d\setminus D^c\times D^c} |u(x)-u(y)|^p\nu(x,y)\,\dx\dy
\leq c
\iint\limits_{D^c\times D^c}|g(w)-g(z)|^p\gamma_D(w,z)\,
\dw\dz\,.\label{eq:th61}
\end{equation}
It follows
that $g\mapsto P_D[g]$ is an extension operator
for nonlocal Sobolev-type spaces $\wdp$, defined
by the finiteness of the left-hand side.
In the remainder of Section \ref{sec:discussion} we
compare
$\vdp$ and $\wdp$.

\textbf{Acknowledgments.} We thank Tomasz Adamowicz, W\l{}odzimierz B\c{a}k, Artur Bogdan, Bart\l{}omiej Dyda, Agnieszka Ka\l{}amajska, Moritz Kassmann, Mateusz Kwa\'snicki, Ren\'e Schilling and Enrico Valdinoci for discussions, comments or suggestions.

\section{Preliminaries}\label{sec:prelim}
All the considered functions, sets and measures are tacitly assumed to be Borel.
When we write $f\approx g$ (resp. $f
\lesssim g$), we mean that there is a number $c>0$, i.e. a \textit{constant}, such that $(1/c) f(x)\leq g(x)\leq c f(x)$ (resp. $f(x)
\leq c g(x)$) for all arguments $x$. Important constants will be capitalized: $C_1,C_2,\ldots,$ and their values will not change throughout the paper.
\subsection{Processes and potential-theoretic notions}\label{sec:pot-theo-notions}

Let
$L$ and $\nu$ be as in the Introduction. Following \cite{MR4088505}, we \textit{additionally} assume that:
\begin{itemize}
	\item[\bf (A1)]
	$\nu$ is twice continuously differentiable on $(0,\infty)$ and there is a constant $C_1$ such that
	\begin{equation*}\label{e:on}
		|\nu'(r)|, |\nu''(r)|\leq C_1\nu(r), \qquad r>1.
	\end{equation*}
	\item[\bf (A2)] There exist constants $\beta\in (0,2)$ and $C_2>0$ such that		
	\begin{eqnarray}\label{eq:nuSc}\nu(\lambda r)&\leq& C_2 \lambda^{-d-\beta}\nu(r) ,\qquad 0<\lambda, r\leq 1,\\ \label{es:nuSc1}
		\nu(r)&\leq& C_2 \nu(r+1), \qquad\quad r\geq 1.
	\end{eqnarray}
\end{itemize}
A prominent representative of \textit{unimodal} L\'evy operators $L$
is the fractional Laplacian $\Delta^{\alpha/2}:=-(-\Delta)^{\alpha/2}$. In this case we have $\nu(x,y) = c_{d,\alpha}|y-x|^{-d-\alpha}$, where $\alpha \in (0,2)$, $x,y\in \Rd$, and $$c_{d,\alpha} = \frac{2^\alpha \Gamma((d+\alpha)/2)}{\pi^{d/2}|\Gamma(-\alpha/2)|}.$$
We refer the reader to Bogdan and Byczkowski \cite{MR1671973}, Di Nezza, Palatucci and Valdinoci \cite{MR2944369}, Garofalo \cite{MR3916700}, and Kwa\'snicki \cite{MR3613319} for more information on $\Delta^{\alpha/2}$.
Clearly,
$\nu(r)=c_{d,\alpha}r^{-d-\alpha}$ satisfies both
\textbf{(A1)} and \textbf{(A2)}.

Our results depend in part on martingale properties of harmonic functions, so
we introduce
the L\'{e}vy process $(X_t,t\ge 0)$
on $\mathbb R^d$ whose generator
is given by \eqref{eq:L-def}.
Let
$$
\psi(\xi)=\int_\Rd (1-\cos \xi\cdot x)\nu(|x|)\,\dx, \quad \xi \in \Rd,
$$
the L\'evy--Khinchine exponent of $(X_t)$. Since $\nu(\mR^d) = \infty$, by Sato \cite[Theorem 27.7]{MR1739520} and Kulczycki and Ryznar \cite[Lemma 2.5]{MR3413864},
the densities $p_t(x)$ of $(X_t)$ are continuous on $\mR^d\setminus \{0\}$ for $t>0$, and satisfy
$$
\int_\Rd \me^{i\xi\cdot x} p_t(x)\,\dx=\me^{-t\psi(\xi)}, \quad t>0,\,\xi\in \Rd.
$$
For $t>0$ and $x,\,y\in\Rd$ denote $p_t(x,y)=p_t(y-x)$, the transition density of $(X_t)$ considered as Markov process on $\Rd$. Namely, for starting point $x\in \Rd$, times $0\leq t_1<t_2<\ldots t_n$ and sets $A_1, A_2,\ldots A_n\subset \Rd$ we let, as usual,
$$\mpr^x(X_{t_1}\in A_1,\ldots,X_{t_n}\in A_n)\!=\!\!
\int\limits_{A_1}\!\int\limits_{A_2}\!\ldots\int\limits_{A_n} \!p_{t_1}(x,x_1) p_{t_2-t_1}(x_1,x_2)\cdots p_{t_n-t_{n-1}}(x_{n-1},x_n)\,\dx_1\dx_2\cdots\dx_n.$$
This determines $\mpr^x$, the distribution of the process $(X_t)$ starting from $x$, and $\mE^x$, the corresponding expectation.
In the wording of \cite[Section 11]{MR1739520}, $(X_t)$ is the symmetric L\'evy process in $\Rd$
with $(0,\nu,0)$ as the L\'evy triplet.
Without losing generality we actually assume
that each $X_t$ is the canonical projection $X_t(\omega)=\omega(t)$ on the space of c\`adl\`ag functions $\omega\colon [0,\infty)\to \Rd$. We will also use the standard complete right-continuous filtration $(\mathcal F_t, t\geq 0)$ to analyze $(X_t)$, see Protter \cite[Theorem I.31]{MR2273672}. In passing we recall that every L\'evy process is a Feller process \cite{MR2273672}.

Let $\emptyset \neq D\subset \R^d$ be an open set.
The time of the first exit of $X$ from $D$ is, as usual,
$$\tau_D=\inf\{t>0: \, X_t\notin D\}.$$
The Dirichlet heat kernel $p_t^D(x,y)$ is defined by Hunt's formula, cf. Chung and Zhao \cite[Chapter 2.2]{MR1329992},
$$
p_t^D(x,y) = p_t(x,y) - \mE^x(p_{t-\tau_D}(X_{\tau_D},y);\, \tau_D<t),\quad t>0,\ x,y\in\mR^d.
$$
It is the transition density of the process $(X_t)$ killed upon exiting $D$, i.e.,
$$\mE^x[t<\tau_D ;\, f(X_t)]=\int_\Rd f(y)p^D_t(x,y)\dy, \quad x\in \Rd,\, t>0\,,$$
for integrable functions $f$.
The Green function of $D$ is the potential of $p_t^D$:
$$
G_D(x,y)=\int_0^\infty p_t^D(x,y)\,\dt,\quad x,y\in\mR^d,
$$
and by Fubini--Tonelli we have
\begin{equation}\label{eq:gdtd}
\mE^x \tau_D=\int_\Rd G_D(x,y)\,\dy,\quad x\in\mR^d.
\end{equation}
The Poisson kernel of $D$ for $L$ is defined by
\begin{equation}\label{eq:Pk}
P_D(x,z)=\int_D G_D(x,y)\nu(y,z)\,\dy,\quad x\in D,\ z\in D^c.
\end{equation}
With \textbf{(A2)} for bounded set $D$ we easily see that
for all $x,y\in D$ and $z\in D^c$ with $\dist(z,
D
)\geq \rho>0$,
\begin{equation}\label{eq:aaaa}
\nu(x,z) \approx \nu(y,z),
 \end{equation}
where comparability constants depend on $\nu$, $D$ and $\rho$. Consequently, \eqref{eq:Pk} implies
\begin{equation}\label{eq:pjP}
P_D(x,z)\approx \nu(x,z)\mathbb{E}^x\tau_D,\quad x\in D, \quad \dist(z,D
)\geq \rho>0,
\end{equation} with the same proviso on comparability constants. Note that if $D$ is bounded and $x\in D$ is fixed, then $\mE^x\tau_D$ is bounded by a positive constant, see
Pruitt \cite{pruitt1981}.
We further note that for $w,z\in D^c$ the interaction kernel satisfies
\begin{align}
\gamma_ D(w,z) &= \int_ D \int_ D \nu(w,x)G_ D(x,y) \nu(y,z)\,\dx\dy\label{eq:rg1}\\
&= \int_ D\nu(w,x)P_D(x,z)\,\dx
= \int_D \nu(z,x)P_D(x,w)\,\dx = \gamma_D(z,w).\label{eq:rg2}
\end{align}
Finally, the $L$-harmonic measure of $D$ for $x\in \Rd$ is, as usual,
\begin{equation}\label{eq:hm}
\omega_D^x({\rm d}z) =\mpr^x[X_{\tau_D}\in {\rm d}z],
\end{equation}
the distribution of the random variable $X_{\tau_D}$ with respect to $\mathbb P^x.$

From the Ikeda--Watanabe formula (see, e.g., Bogdan, Rosi\'{n}ski, Serafin and Wojciechowski \cite[Section~4.2]{MR3737628}) it follows that $P_D(x,z)\,{\rm d}z$ is the part of $\omega_D^x({\rm d}z)$ which results from the discontinuous exit from $D$ (by a jump). Below, by suitable assumptions on $D$ and $\nu$, we assure that $P_D$ is the density of the whole harmonic measure, that is
\begin{equation}\label{eq:always-jump}
\int_{D^c}P_D(x,z)\,{\rm d}z=1,\quad x\in D.
\end{equation}
This is true, e.g., if $D$ is bounded, $\nu$ satisfies {\bf(A2)}, $|\partial D| = 0$ and $D^c$ has the property {\rm(VDC)}. The latter means that there is $c>0$ such that for every $r>0$ and $x\in \partial D$,
	\begin{equation}\label{eq:VDC}
	|D^c \cap B(x,r)| \geq cr^{d}.
	\end{equation}
	Here, as usual, $B(x,r)=\{y\in \Rd: |y-x|<r\}$.
For the proof of \eqref{eq:always-jump} under the above conditions,
see \cite[Corollary A.2]{MR4088505}.

Observe that for $U\subset D$ we have $p^U\le p^D$ and $G_U\le G_D$. Therefore,
$P_U(x,z)\le P_D(x,z)$ for $x\in U$, $z\in D^c$, and $\gamma_U(z,w)\leq \gamma_D(z,w)$ for $z,w\in D^c$.
These inequalities may be referred to as {\em domain monotonicity}.
For $g\colon D^c\to \R$ we define the Poisson extension of $g$:
\begin{eqnarray}\label{eq:exg}
	P_D[g](x)= \left\{
	\begin{array}{ll}
		g(x) &\mbox{ for } x\in D^c,\\[2mm]
		\int_{D^c} g(z)P_D(x,z)\,\dz & \mbox{ for }\ x\in D,
	\end{array}\right.
\end{eqnarray}
and we call
$\int_{D^c} g(z)P_D(x,z)\,\dz$ the Poisson integral, as long as it is convergent.

\subsection{Function $F_p$ and related function spaces}\label{sec:fp}
We depend on the two humble real functions:
$$x\mapsto |x|^\kappa \quad {\rm and }\quad x\mapsto x^{\langle \kappa\rangle},\quad x\in\mR, \quad \kappa\in \R.$$ Clearly, $|x|^\kappa$ is symmetric, $x^{\langle \kappa\rangle}$ is antisymmetric: $(-x)^{\langle \kappa\rangle}=-x^{\langle \kappa\rangle}$,
and their derivatives obey
$$(|x|^{\kappa})' = \kappa x^{\langle \kappa-1\rangle}\quad {\rm and }\quad (x^{\langle \kappa\rangle})' = \kappa|x|^{\kappa-1}, \quad x\neq 0.$$
Recall that $p>1$. We let
\begin{align}F_p(a,b)=|b|^p-|a|^p-p a^{\langle p-1\rangle}(b-a),\quad a,b \in \mathbb R.\label{eq:Fpdef}\end{align}
For instance, if $p=2$, then $F_2(a,b)=(b-a)^2$, and if $p=4$, then $F_4(a,b)=(b-a)^2(b^2+2ab+3a^2)$.
As the second-order Taylor remainder of the convex function $|x|^p$, $F_p$ is nonnegative. In fact,
\begin{equation}\label{elem-ineq}
F_p(a,b)
\;\approx\; (b-a)^2(|b|\vee |a|)^{p-2}, \quad
a, b\in\mathbb R,
\end{equation}
see \cite[Lemma~6]{MR3251822}.
In particular, for $p\geq 2$ we have
\begin{equation}\label{elem-ineq-2}
F_p(a,b)\approx
(b-a)^2(|a|^{p-2}+|b|^{p-2}),\quad a,b\in \R.
\end{equation}

Recall that if $X$ is a random variable with the first moment finite
and $a\in\R$, then
 \begin{equation}\label{eq:variance}
 \mathbb E(X-a)^2=\mathbb E(X-\mathbb E X)^2 + (\mathbb EX-a)^2=\mbox{Var}\,X+ (\mathbb EX-a)^2.
 \end{equation}
Here we do not exclude the case $\mathbb E X^2=\infty$, in which case both sides of \eqref{eq:variance} are infinite, hence equal.
This
variance formula has the following analogue for $F_p$.
\begin{lemma}\label{lem:aaa} Let $p> 1.$
Suppose that $X$ is a random variable such that $\mathbb E|X|<\infty$. Then,
\begin{enumerate}
\item[(i)]$\mathbb EF_p(\mathbb EX, X)=\mathbb E|X|^p-\left|\mathbb EX\right|^p\geq 0,$
\item[(ii)]$\mathbb E F_p(a,X)=F_p(a,\mathbb EX)+\mathbb EF_p(\mathbb EX,X)\geq \mathbb EF_p(\mathbb EX,X),\quad a\in\R,$
\item[(iii)]$\mathbb EF_p(a,X)= \mathbb EF_p(b, X) + F_p(a,b) +(pa^{\langle p-1\rangle}-pb^{\langle p-1\rangle})(b-\mathbb EX), \quad a,b\in \mR.$
\end{enumerate}
\end{lemma}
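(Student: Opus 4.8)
The plan is to prove the three identities by reducing everything to the definition of $F_p$ and the linearity of expectation, starting from (i), which is the engine for (ii) and (iii). For (i), I would simply expand $F_p(\mathbb EX,X)=|X|^p-|\mathbb EX|^p-p(\mathbb EX)^{\langle p-1\rangle}(X-\mathbb EX)$ and take expectations term by term. The first term gives $\mathbb E|X|^p$ (possibly $+\infty$, which is fine since $F_p\ge 0$ forces the left side to be well-defined in $[0,\infty]$), the second is the constant $|\mathbb EX|^p$, and the third vanishes because $(\mathbb EX)^{\langle p-1\rangle}$ is a constant and $\mathbb E(X-\mathbb EX)=0$ — here the hypothesis $\mathbb E|X|<\infty$ is exactly what makes $\mathbb EX$ finite and the last expectation legitimate. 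Nonnegativity is already recorded in the excerpt ($F_p\ge 0$ as a Taylor remainder of the convex $|x|^p$), so (i) follows.

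For (ii), the key observation is the pointwise algebraic identity
\begin{equation*}
F_p(a,X)=F_p(a,\mathbb EX)+F_p(\mathbb EX,X)+\bigl(p a^{\langle p-1\rangle}-p(\mathbb EX)^{\langle p-1\rangle}\bigr)(X-\mathbb EX),
\end{equation*}
which one checks by writing out all three $F_p$'s from the definition: the $|X|^p$ terms match, the polynomial-in-$a$ and polynomial-in-$\mathbb EX$ pieces rearrange, and the cross term is what remains. Taking expectations, the last term dies (constant times $\mathbb E(X-\mathbb EX)=0$), $F_p(a,\mathbb EX)$ is deterministic, and $\mathbb EF_p(\mathbb EX,X)$ is handled by (i); the inequality is then immediate from $F_p(a,\mathbb EX)\ge 0$.

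For (iii) I would use the same pointwise expansion but now keep a general $b$ in place of $\mathbb EX$: expanding $F_p(a,X)$ and $F_p(b,X)$ from the definition and subtracting, the $|X|^p$ terms cancel, leaving
\begin{equation*}
F_p(a,X)-F_p(b,X)=\bigl(|b|^p-|a|^p\bigr)-p a^{\langle p-1\rangle}(X-a)+p b^{\langle p-1\rangle}(X-b),
\end{equation*}
and one reorganizes the right side into $F_p(a,b)+(p a^{\langle p-1\rangle}-p b^{\langle p-1\rangle})(b-X)$ by adding and subtracting $p a^{\langle p-1\rangle}(b-a)$; taking expectations then replaces the only random quantity $X$ by $\mathbb EX$. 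Note (ii) is the special case $b=\mathbb EX$ of (iii) combined with (i), so one could even deduce (ii) from (iii), but I would present (i) and (ii) first since (i) is needed anyway.

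The only real care needed — the ``main obstacle,'' such as it is — is the bookkeeping around infinities and integrability: one must check that every expectation split is valid, i.e. that the terms being separated are each integrable or of one sign. Since $\mathbb E|X|<\infty$ makes all the linear-in-$X$ terms integrable, and $F_p\ge 0$ lets us treat $\mathbb E|X|^p$ and $\mathbb EF_p(\cdot,X)$ as well-defined elements of $[0,\infty]$ with the identities holding in $[0,\infty]$ (both sides simultaneously infinite when $\mathbb E|X|^p=\infty$), there is no genuine difficulty; it is a matter of phrasing the argument so these conventions are explicit, exactly as the excerpt already does for the classical variance identity \eqref{eq:variance}.
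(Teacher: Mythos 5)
Your approach is essentially the same as the paper's: expand $F_p$ from its definition, use linearity of expectation to annihilate the linear-in-$X$ term (this is where $\mathbb E|X|<\infty$ is used), and handle $\mathbb E|X|^p=\infty$ via $F_p\geq 0$, exactly as the paper does for its variance formula~\eqref{eq:variance}. One small algebraic slip worth catching: the pointwise identity you wrote down for (ii) has the cross term with the wrong sign. Writing $m=\mathbb EX$, the correct identity is
\begin{equation*}
F_p(a,X)=F_p(a,m)+F_p(m,X)+\bigl(pa^{\langle p-1\rangle}-pm^{\langle p-1\rangle}\bigr)(m-X),
\end{equation*}
with $(m-X)$, not $(X-m)$ as you wrote; this also follows from your own (correct) reorganization for (iii) by setting $b=m$, which produces $(b-X)$. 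The slip is harmless in the end precisely because $\mathbb E(X-m)=0$ kills the term regardless of its sign, but as a pointwise statement the identity you asserted is false, and it is worth fixing since it masquerades as the ``key observation.''
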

\begin{proof} The verification is elementary, but we present it
to emphasize that the finiteness of the first moment suffices. We have
\begin{align*}
\mE F_p(\mE X, X) = \mE\Big[|X|^{p} - |\mE X|^p - p(\mE X)^{\langle p-1\rangle}(X - \mE X)\Big] = \mE|X|^p - |\mE X|^{p},
\end{align*}
where $\mathbb E|X|^p=\infty$ is permitted, too. The expression in (i) is nonnegative by Jensen's inequality or because $F_p$ is nonnegative.
For all $a\in \mR$ we have,
\begin{align*}
\mE F_p(a,X) =& \,\mE \Big[|X|^p - |a|^p - pa^{\langle p-1\rangle}(X - a)\Big] \\
=&\, \mE \Big[|X|^p - |\mE X|^p - p(\mE X)^{\langle p-1\rangle}(X - \mE X)\Big]
 + |\mE X|^p - |a|^p - pa^{\langle p-1\rangle}(\mE X - a)\\
=&\, \mE F_p(\mE X, X) + F_p(a,\mE X)\geq \mE F_p(\mE X, X),
\end{align*}
as claimed in (ii). Finally, for all $a,b\in \mR$ the right-hand side of (iii) is
\begin{align*}
\mE |X|^p - |b|^p - pb^{\langle p-1\rangle}(\mE X - b)
+
|b|^p-|a|^p-p a^{\langle p-1\rangle}(b-a)
+
(pa^{\langle p-1\rangle}-pb^{\langle p-1\rangle})(b-\mathbb EX),
\end{align*}
which simplifies to the left-hand side of (iii). Needless to say, (ii) is a special case of (iii).
\end{proof}
We next propose a simple lemma concerning the $p$-th moments of
random variables, which is another generalization of \eqref{eq:variance}.
\begin{lemma}\label{lem:elem-p} For every $p\geq 1$
there exist constants $0<c_p\leq C_p$ such that for every random variable
$X$ with $\mathbb E|X|<\infty$ and every number $a\in \mathbb R,$
\begin{equation}\label{eq:p-variance}
c_p \left(\mathbb E|X-\mathbb E X|^p +|\mathbb EX-a|^p\right)
\leq \mathbb E|X-a|^p\leq C_p \left(
\mathbb E|X-\mathbb E X|^p +|\mathbb EX-a|^p\right).
\end{equation}
\end{lemma}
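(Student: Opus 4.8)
The plan is to split $X-a$ into its centered part and a deterministic constant, and to treat the two inequalities in \eqref{eq:p-variance} separately: the right-hand one is a one-line consequence of convexity, while the left-hand one needs a short bootstrap through Jensen's inequality. First I would put $Y=X-\mathbb E X$ and $m=\mathbb E X-a$, so that $X-a=Y+m$ with $\mathbb E Y=0$ and $m$ nonrandom. A preliminary reduction: if $\mathbb E|X|^p=\infty$, then from the pointwise bound $|X|^p\le 2^{p-1}(|Y|^p+|\mathbb E X|^p)$ (valid since $p\ge 1$) and $|\mathbb E X|\le \mathbb E|X|<\infty$ we get $\mathbb E|Y|^p=\infty$, and similarly $\mathbb E|X-a|^p=\infty$; thus all three quantities in \eqref{eq:p-variance} are infinite and the claim is trivial. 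So I may and do assume $\mathbb E|X|^p<\infty$, hence $\mathbb E|Y|^p<\infty$ and $\mathbb E|X-a|^p<\infty$.

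For the upper bound, the convexity inequality $|u+v|^p\le 2^{p-1}(|u|^p+|v|^p)$ applied pointwise with $u=Y$, $v=m$ and then integrated yields $\mathbb E|X-a|^p\le 2^{p-1}\bigl(\mathbb E|Y|^p+|m|^p\bigr)$, i.e. the right inequality with $C_p=2^{p-1}$. For the lower bound I would first invoke Jensen's inequality for the convex function $t\mapsto|t|^p$ and the integrable variable $X-a$ to obtain $\mathbb E|X-a|^p\ge|\mathbb E(X-a)|^p=|m|^p$. Then, applying $|u+v|^p\le 2^{p-1}(|u|^p+|v|^p)$ with $u=X-a$, $v=-m$ and integrating gives $\mathbb E|Y|^p\le 2^{p-1}\bigl(\mathbb E|X-a|^p+|m|^p\bigr)\le 2^p\,\mathbb E|X-a|^p$, where the last step feeds back the bound $|m|^p\le\mathbb E|X-a|^p$. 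Averaging $\mathbb E|X-a|^p\ge|m|^p$ and $\mathbb E|X-a|^p\ge 2^{-p}\mathbb E|Y|^p$ produces $\mathbb E|X-a|^p\ge\tfrac12\bigl(|m|^p+2^{-p}\mathbb E|Y|^p\bigr)\ge 2^{-p-1}\bigl(\mathbb E|Y|^p+|m|^p\bigr)$, which is the left inequality with $c_p=2^{-p-1}$.

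The only mildly delicate point is the lower estimate, since $\mathbb E|X-a|^p$ cannot be compared with $\mathbb E|Y|^p$ directly; the remedy is precisely the bootstrap above, namely extracting $|m|^p$ from Jensen and then recycling it through the triangle/convexity inequality. No sharp constants are sought: the explicit choices $C_p=2^{p-1}$, $c_p=2^{-p-1}$ suffice. One can also note that \eqref{eq:p-variance} is the $p$-analogue of the variance identity \eqref{eq:variance} and, for $p\ge 2$, is consistent with \eqref{elem-ineq-2} together with Lemma~\ref{lem:aaa}(ii); but the self-contained argument sketched here is the shortest route.
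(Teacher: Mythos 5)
Your proof is correct and follows essentially the same route as the paper: handle $\mathbb E|X|^p=\infty$ separately, get the upper bound from convexity with $C_p=2^{p-1}$, and for the lower bound combine Jensen's inequality $|\mathbb EX-a|^p\le\mathbb E|X-a|^p$ with the convexity estimate $\mathbb E|X-\mathbb EX|^p\le 2^p\mathbb E|X-a|^p$. The only cosmetic difference is that you average the two inequalities (yielding $c_p=2^{-p-1}$) while the paper adds them (yielding $c_p=1/(1+2^p)$); both are fine.
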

\begin{proof}
If $\mathbb E|X|^p=\infty$, then all the sides of \eqref{eq:p-variance} are infinite. Otherwise, by convexity,
\[\mathbb E|X-a|^p= \mathbb E|(X-\mathbb EX)+(\mathbb EX-a)|^p
\leq 2^{p-1} \left(\mathbb E|X-\mathbb EX|^p +\left|\mathbb EX-a\right|^p\right).\]
For the lower bound we make two observations:
$\left|\mathbb EX-a\right|^p\leq \mathbb E|X-a|^p$ (Jensen's inequality),
and
\begin{eqnarray*}
\mathbb E|X-\mathbb EX|^p &=& \mathbb E |(X-a)-(\mathbb EX-a)|^p
\leq 2^{p-1} \left(\mathbb E|X-a|^p +\left|\mathbb EX-a\right|^p\right)
\leq 2^p \mathbb E|X-a|^p.
\end{eqnarray*}
Adding the two, we get that
$|\mathbb EX-a|^p+\mathbb E|X-\mathbb EX|^p\leq (1+2^p)\mathbb E|X-a|^p.$
\end{proof}

The function $F_p(a,b)$ is not symmetric in $a,b$, but the right-hand side of \eqref{elem-ineq} is, so it is natural to consider the
symmetrized version of $F_p$, given by the formula:
\begin{eqnarray}\label{eq:wH}
H_p(a,b)& = & \tfrac 12(F_p(a,b)+F_p(b,a))
= \tfrac p2 (b^{\langle p-1\rangle} - a^{\langle p-1\rangle})(b-a),\quad a,b\in\mathbb R.
\end{eqnarray}
We can relate $H_p$ to a ``quadratic'' expression as follows.
\begin{lemma}\label{lem:quad}For every $p>1$ we have
	 $F_p(a,b)\approx H_p(a,b)\approx (b^{\langle p/2\rangle} - a^{\langle p/2\rangle})^2$.
\end{lemma}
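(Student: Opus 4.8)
The plan is to deduce everything from the two-sided estimate \eqref{elem-ineq}. Since the right-hand side of \eqref{elem-ineq}, namely $(b-a)^2(|a|\vee|b|)^{p-2}$, is symmetric in $a$ and $b$, applying \eqref{elem-ineq} with the roles of $a$ and $b$ interchanged gives $F_p(b,a)\approx (b-a)^2(|a|\vee|b|)^{p-2}$ as well; averaging, the definition \eqref{eq:wH} yields $H_p(a,b)\approx (b-a)^2(|a|\vee|b|)^{p-2}$. In particular $F_p(a,b)\approx H_p(a,b)$, and the lemma is reduced to the single comparison
\[
(b^{\langle p/2\rangle}-a^{\langle p/2\rangle})^2\;\approx\;(b-a)^2(|a|\vee|b|)^{p-2},\qquad a,b\in\mathbb R,
\]
with comparability constants depending only on $p$.

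To prove this, write $q=p/2$, so $q>1/2$. Both sides vanish at $a=b=0$, are invariant under $(a,b)\mapsto(-a,-b)$ since $x\mapsto x^{\langle\kappa\rangle}$ is odd, and are symmetric under swapping $a$ and $b$. Hence it suffices to treat two configurations. If $0\le a\le b$ (so $b>0$ unless both vanish), substitute $a=tb$ with $t\in[0,1]$, so that $|a|\vee|b|=b$; the asserted comparison becomes $(b^q-a^q)^2=b^{2q}(1-t^q)^2\approx b^{2q}(1-t)^2=(b-a)^2 b^{2q-2}$, i.e.\ $1-t^q\approx 1-t$ on $[0,1]$, which holds because the ratio $(1-t^q)/(1-t)$ extends to a continuous, strictly positive function on the compact interval $[0,1]$ (its value at $t=1$ being $q$). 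If instead $a\le 0\le b$ with $b\vee|a|>0$, then $b^{\langle q\rangle}-a^{\langle q\rangle}=b^q+|a|^q\approx(b\vee|a|)^q$, while $(b-a)^2(|a|\vee|b|)^{p-2}=(b+|a|)^2(b\vee|a|)^{2q-2}\approx(b\vee|a|)^{2q}$, so both sides are comparable to $(b\vee|a|)^{2q}$. Combining the two cases, and relabelling $a\leftrightarrow b$ and $(a,b)\mapsto(-a,-b)$ to cover the remaining sign patterns, proves the comparison, hence the lemma.

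I do not expect a genuine difficulty; the only point deserving care is that all comparability constants depend on $p$ alone, which is why I prefer to phrase the elementary estimate $1-t^q\approx 1-t$ via the positive continuous extension of $(1-t^q)/(1-t)$ to $[0,1]$. If one wants a fully explicit argument, one can split at $t=1/2$: for $t\in[0,1/2]$ both $1-t^q$ and $1-t$ lie between fixed positive constants, while for $t\in[1/2,1]$ the mean value theorem gives $1-t^q=q\xi^{q-1}(1-t)$ for some $\xi\in(t,1)\subset(1/2,1)$, so that $q\xi^{q-1}$ stays between $q(2^{1-q}\wedge 1)$ and $q(2^{1-q}\vee 1)$.
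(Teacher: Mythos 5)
Your proof is correct. Both you and the paper obtain $F_p\approx H_p$ from \eqref{elem-ineq} in exactly the same way, and both reduce the remaining comparison to a one-variable ratio estimate by substituting $a=tb$; the difference is in which two quantities get paired and how the ratio is controlled. The paper compares $H_p$ directly with $(b^{\langle p/2\rangle}-a^{\langle p/2\rangle})^2$, arriving at $(t^{\langle p-1\rangle}-1)(t-1)\approx(t^{\langle p/2\rangle}-1)^2$ over all $t\in\mathbb{R}$ and invoking continuity together with the common $|t|^p$ growth at $\pm\infty$ and the finite positive limit of the ratio at $t=1$. You instead compare $(b^{\langle p/2\rangle}-a^{\langle p/2\rangle})^2$ with the symmetric quantity $(b-a)^2(|a|\vee|b|)^{p-2}$ from \eqref{elem-ineq}, split by the signs of $a$ and $b$, and in the same-sign case reduce to $1-t^q\approx 1-t$ on the compact interval $[0,1]$. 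Your route trades the paper's behaviour-at-infinity argument for an explicit sign-case analysis: the mixed-sign case is handled by a direct two-sided bound $b^q+|a|^q\approx(b\vee|a|)^q$, and the same-sign case stays on $[0,1]$ where the continuous-extension argument (or your mean-value-theorem fallback, which also makes the constants explicit) is immediate. Both arguments are elementary and of comparable length; yours is slightly more explicit and avoids reasoning about the ratio as $t\to\pm\infty$, while the paper's is more compact. No gaps.
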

\begin{proof}
The first comparison follows from
\eqref{elem-ineq}: we have $F_p(a,b)\approx F_p(b,a)$, hence $F_p\approx H_p$. As for the second statement, if either $a$ or $b$ are equal to $0$, then the expressions coincide up to constants depending on $p$.
	If $a,b\neq 0$, then $a = tb$ with $t\neq 0$. Using this representation we see that the second comparison
	is equivalent to the following:
	\begin{equation*}
	(t^{\langle p-1\rangle} - 1)(t-1) \approx (t^{\langle p/2\rangle}-1)^2,\quad t\in \mR.
	\end{equation*}
The latter holds
because both sides are continuous and positive except at $t=1$; at infinity
both are power functions with the leading term $|t|^p$, and at $t=1$ their ratio converges to a positive constant.
 \end{proof}
Summarizing, by \eqref{elem-ineq} and Lemma~\ref{lem:quad} for each $p\in (1,\infty)$ we have
\begin{equation}\label{e.cFH}
F_p(a,b)\approx H_p(a,b)\approx(b-a)^2(|b|\vee |a|)^{p-2}\approx (b^{\langle p/2\rangle} - a^{\langle p/2\rangle})^2, \quad
a, b\in\mathbb R.
\end{equation}
It
is hard to trace down the first occurrence of such comparisons
 in the literature. The one-sided inequality
$|b^{p/2}-a^{p/2}|^2\le \frac{p^2}{4(p-1)}(b-a)(b^{p-1}-a^{p-1})$ for $a,b\ge 0$ can be found in connection with logarithmic Sobolev inequalities, e.g., in Davies \cite[(2.2.9)]{MR1103113} for $2<p<\infty$, and Bakry \cite[p. 39]{MR1307413} for $p>1$. The opposite inequality
$(b-a)(b^{p-1}-a^{p-1})\leq (b^{p/2}-a^{p/2})^2$ with $a,b>0$ and $p>1$ appears, e.g., in \cite[Lemma 2.1]{MR1409835}.

In fact the following inequalities hold for all $p\in (1,\infty)$ and $a,b\in \R$:
\begin{equation}\label{e.cHk}
\frac{4(p-1)}{p^2}(b^{\langle p/2\rangle}-a^{\langle p/2\rangle})^2\leq (b-a)(b^{\langle p-1\rangle}-a^{\langle p-1\rangle})\leq 2(b^{\langle p/2\rangle}-a^{\langle p/2\rangle})^2.
\end{equation}
Indeed, if $a$ and $b$ have opposite signs then it is enough to consider $b=t\ge 1$ and $a=-1$, and to compare
$(t+1)(t^{p-1}+1)=t^p+t^{p-1}+t+1$ with $(t^{p/2}+1)^2=t^p+2t^{p/2}+1$.
We have $t^{p/2}= \sqrt{t^{p-1}t}\le (t^{p-1}+t)/2$, which verifies the left-hand side inequality in \eqref{e.cHk} with constant $1$, which is better than $4(p-1)/p^2$.
We further get the right-hand side inequality in \eqref{e.cHk}, and the constant $2$ suffices, because $t^{p-1}+t -(t^p+1)=(1-t)(t^{p-1}-1)\le 0$. Note that the constant $2$ is not optimal for individual values of $p$, e.g., for $p=2$, but the constant $1$ does not suffice for $p\in (1,2)\cup(2,\infty)$ because then $1\vee (p-1) > p/2$, and so $t^{p-1} + t > 2t^{p/2}$ for large $t$.

If $a$ and $b$ have the same sign, then we may assume $b=ta$, $a>0$, $t\geq 1$, and consider
the quotient
\[H(t)= \frac{(t^{p-1}-1)(t-1)}{(t^{p/2}-1)^2}= 1 -\frac{t(t^{(p-2)/2}-1)^2}{(t^{p/2}-1)^2} = 1- h(s)^2,
\]
where $s=\sqrt t$, $h(s)= s(s^{p-2}-1)/(s^p-1)$. We see that $h(s)$ is strictly positive for $p>2,$ $s>1$ and negative for $p\in (1,2)$. We claim that it decreases in the former case and increases in the latter. The sign of the derivative of $h$ is the same as the sign of the function $l(s) = -s^{2p-2} + (p-1)s^p - (p-1)s^{p-2} + 1$. Now, since $l(1) = 0$, the sign of $l$ on $(1,\infty)$ is in turn equal to the sign of $l'(s) = (p-1)s^{p-3}(-2s^p + ps^2 - (p-2))$, and further equal to the sign of $-2p(s^{p-1} - s)$. Since the last function is negative on $(1,\infty)$ if $p>2$ and positive for $p\in (1,2)$, the claim is proved. Consequently, the function $s\mapsto h(s)^{2}$ is decreasing on $(1,\infty)$, so we get
\[\lim_{t\to 1^+} H(t) = \frac{4(p-1)}{p^2} < H(t) < 1, \quad t> 1,\]
and \eqref{e.cHk} follows. The above also shows
that the constant $4(p-1)/p^2$ in \eqref{e.cHk} cannot be improved.

We would like to note that for $p\neq 2$, $F_p(a+t,b+t)$ is \textit{not} comparable with $F_p(a,b)$. Indeed, for $a,r>0$ one has $F_p(a,a+r)\approx r^2(a\vee (a+r))^{p-2} = r^2(a+r)^{p-2}$, which is not comparable with $F_p(0,r)=r^2$ for large values of $a$.
Here are one-sided comparisons of $F_p(a,b)$ with the more usual $p$-increments, \ar{see, e.g., Zeidler \cite[p. 503]{MR1033498}.}
\begin{lemma}\label{lem:p-incr}
If $p\geq 2$ then $F_p(a,b) \gtrsim |b-a|^p$,
and if $1<p\leq 2$, then $|b-a|^p\gtrsim F_p(a,b)$.
\end{lemma}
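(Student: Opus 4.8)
The plan is to read off both inequalities directly from the two-sided comparison \eqref{e.cFH},
\[
F_p(a,b)\approx (b-a)^2(|a|\vee |b|)^{p-2},\qquad a,b\in\R,
\]
together with the elementary bound $|a|\vee|b|\ge |b-a|/2$, which is nothing but the triangle inequality $|b-a|\le |a|+|b|\le 2(|a|\vee|b|)$. Since both sides of each asserted inequality vanish when $a=b$, it suffices to treat $a\ne b$, in which case $|a|\vee|b|>0$ and there is no difficulty with the (possibly negative) power $p-2$.

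First I would take $p\ge 2$. Then $p-2\ge 0$, so $t\mapsto t^{p-2}$ is nondecreasing on $[0,\infty)$ and hence $(|a|\vee|b|)^{p-2}\ge (|b-a|/2)^{p-2}$. Feeding this into \eqref{e.cFH} gives
\[
F_p(a,b)\;\gtrsim\;(b-a)^2(|a|\vee|b|)^{p-2}\;\ge\; 2^{2-p}(b-a)^2|b-a|^{p-2}\;=\;2^{2-p}|b-a|^p,
\]
which is the first claim. Next, for $1<p\le 2$ the exponent $p-2$ is $\le 0$, so $t\mapsto t^{p-2}$ is nonincreasing on $(0,\infty)$; since $|b-a|/2\le |a|\vee|b|$ we get $(|a|\vee|b|)^{p-2}\le (|b-a|/2)^{p-2}$, and \eqref{e.cFH} then yields
\[
F_p(a,b)\;\lesssim\;(b-a)^2(|a|\vee|b|)^{p-2}\;\le\;2^{2-p}(b-a)^2|b-a|^{p-2}\;=\;2^{2-p}|b-a|^p,
\]
which is the second claim.

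There is essentially no obstacle here: the whole argument is a one-line consequence of \eqref{e.cFH} and the triangle inequality, the only point requiring (minor) care being the degenerate case $a=b$ — equivalently, avoiding a negative power of $0$ when $p<2$ — which is disposed of by observing that both sides are then $0$. An alternative, self-contained route would exploit the homogeneity $F_p(\lambda a,\lambda b)=|\lambda|^p F_p(a,b)$ to reduce to $|a|\vee|b|=1$ and then compare $F_p(a,b)$ with $|b-a|^p$ by a compactness argument, handling the limit $b\to a$ by a Taylor expansion of $|x|^p$; but invoking \eqref{e.cFH} is cleaner and avoids repeating that analysis.
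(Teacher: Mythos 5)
Your argument is correct and is essentially the same as the paper's: both start from the comparison $F_p(a,b)\approx (b-a)^2(|a|\vee|b|)^{p-2}$ and the elementary bound $|b-a|\le 2(|a|\vee|b|)$, then read off the two inequalities according to the sign of $p-2$. The only cosmetic difference is that the paper phrases this in terms of the quotient $F_p(a,b)/|b-a|^p$ while you substitute directly; the content is identical.
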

\begin{proof}
If $a=b$, then the inequalities are trivial, so assume that $a\neq b$ and consider the quotient
\[\frac{F_p(a,b)}{|b-a|^p}\approx \frac{(|a|\vee |b|)^{p-2}}{|b-a|^{p-2}}.\]
Both parts of the statements now follow from the inequality $|b-a|^r\leq 2^r(|a|\vee |b|)^r$, $r>0.$

\end{proof}

In analogy to \eqref{eq:SVf} for $u\colon\Rd\to \R$ we define
\begin{equation}\label{eq:defEpD}
\epd[u]:= \tfrac 1p\!\!\!\iint\limits_{\R^d\times \R^d\setminus D^c\times
	D^c}F_p(u(x),u(y))\nu(x,y)\,\dx\dy.
	\end{equation}
By the symmetry of $\nu$ and \eqref{eq:wH},
\begin{eqnarray}\epd[u]&=&
\tfrac1p\!\!\!\!\!\!\iint\limits_{\R^d\times \R^d\setminus D^c\times
D^c}H_p(u(x),u(y))\nu(x,y)\,\dx\dy \nonumber \\
&=&
\tfrac{1}{2}\!\!\!\!\!\!\iint\limits_{\R^d\times \R^d\setminus D^c\times
D^c}
(u(y)^{\langle p-1\rangle}-
u(x)^{\langle p-1\rangle})(u(y)-u(x))
\nu(x,y)\,\dx\dy.\label{eq:Ep}
\end{eqnarray}
Of course, $\form{E}{D}{2}=\form{E}{D}{}$.
For $D=\R^d$ we have
\begin{equation}\label{eq:fsp}
\form{E}{\mR^d}{p}[u]=
\tfrac 12\iint\limits_{\R^d\times \R^d} (u(y)^{\langle p-1\rangle}-
u(x)^{\langle p-1\rangle})(u(y)-u(x))\nu(x,y)\,\dx\dy.
\end{equation}
Clearly, for $p=2$ we retrieve the classical
Dirichlet form of the operator $L.$

Let $g\colon D^c\to \R$. To quantify the increments of $g$, we use the form:
\begin{eqnarray}\nonumber
\hpd[g] &=& \tfrac 1p\iint\limits_{D^c\times D^c}F_p(g(w),g(z))\gamma_D(w,z)\,\dw\dz
= \tfrac 1p\iint\limits_{D^c\times D^c}H_p(g(w),g(z))\gamma_D(w,z)\,\dw\dz\\
&=& \tfrac{1}{2} \iint\limits_{D^c\times D^c}(g(z)^{\langle p-1\rangle}-
g(w)^{\langle p-1\rangle})(g(z)-g(w))\gamma_D(w,z)\,\dw\dz.\label{eq:sHp}
\end{eqnarray}
The spaces $\vd$ and $\spc{X}{D}{}$ discussed in the Introduction lend themselves to the following generalizations:
\begin{equation}\label{eq:vdp}
\vdp:=\{u\colon \R^d\to\R\ |\ \epd[u]<\infty\},
\end{equation}
and
\begin{equation}\label{eq:xdp}
 \xdp:= \{g\colon D^c\to\R\ |\ \hpd[g]<\infty\}.
\end{equation}
We call them Sobolev--Bregman spaces, since they involve the Bregman divergence.
Our development below indicates that $\vdp$ and $\xdp$
provide a viable framework for
nonlocal nonlinear variational problems.
In view of \eqref{eq:Ep} for all $u\colon\Rd\to \R$ we have
\begin{equation}\label{eq:rp2}
\epd[u]=\form{E}{D}{}(u^{\langle p-1\rangle},u),
\end{equation}
where
\begin{equation*}
\form{E}{D}{}(v,u) := \tfrac 12 \iint\limits_{\mR^d\times \mR^d\setminus D^c\times D^c} (v(x) - v(y))(u(x) - u(y))\nu(x,y)\, \dx\dy,
\end{equation*}
if the integral is well defined, which is the case in \eqref{eq:rp2} for $v=u^{\langle p-1\rangle}$.
For clarity we also note that by \eqref{e.cHk}, \eqref{eq:Ep} and \eqref{eq:sHp}, we have the \textit{comparisons}
\begin{equation}\label{eq:apEp2}
\epd[u] \approx \form{E}{D}{}[u^{\langle p/2\rangle}],
\end{equation}
and
\begin{equation}\label{eq:apHp2}
\hpd[g] \approx \form{H}{D}{}[g^{\langle p/2\rangle}],
\end{equation}
for all $u\colon\Rd\to \R$ and $g\colon D^c\to \R$ with the comparability constants depending only on $p$.
Below, however, we focus on genuine \textit{equalities}.

\section{Hardy--Stein identity}\label{sec:harm}

We first collect properties of harmonic
functions that are needed in the proof of the identity
\eqref{eq:nsDi}. We mostly follow
\cite{MR4088505}, so our presentation will be brief.
We write $U\subset\subset D$ if the closure of $U$ is a compact subset of $D$.

\begin{definition}\label{def:rh}
	 We say that the function $u\colon \mathbb R^d\to\mathbb R$ is $L$-harmonic
(or harmonic, if $L$ is understood) in $D$ if it has the mean value property inside $D$, that is: for every open set $U\subset\subset D$,
$$u(x) = \mathbb{E}^x u(X_{\tau_U}), \quad x\in U.$$
If $u(x) = \mathbb{E}^x u(X_{\tau_D})$ for all $x\in D$, then we say that $u$ is regular harmonic.
 \end{definition}
 In the above we
assume that the expectations are absolutely convergent.

The strong Markov property of $(X_t)$ implies that if $u$ is regular $L$-harmonic in $D$, then it is $L$-harmonic in $D$.
By \cite[Section 4]{MR4088505}, if $u$ is
	$L$-harmonic in $D$, then $u\in L^1_{loc} (\mR^d) \cap C^2(D),$
$Lu(x)$ can be computed pointwise for $x\in D$ as in \eqref{eq:L-def}, and $Lu(x)=0$ for $x \in D$. We also note that the Harnack inequality holds for $L$-harmonic functions (see Grzywny and Kwa\'snicki \cite[Theorem 1.9]{MR3729529}; the assumptions of that theorem follow from {\bf (A2)}).

We will use the following Dynkin-type lemma, proven in our setting in \cite[Lemma 4.11]{MR4088505}.
\begin{lemma}\label{lem:dynk}
		Let the set $U\subset\subset D$ be open and Lipschitz.
If $\int_{\mR^d} |\phi (y)| (1\wedge \nu(y))\, \dy < \infty$ and $\phi \in C^2(\oU)$,
 then $L\phi$ is bounded on $\oU$ and
 for every $x\in \mR^d$,
		\begin{equation}\label{eq:A1}
		\mE^x\phi (X_{\tau_U}) - \phi (x) = \int_U G_U(x,y) L \phi (y) \,\dy,
		\end{equation}
where the integrals converge absolutely.
\end{lemma}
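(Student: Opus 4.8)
The plan is to reduce \eqref{eq:A1} to the classical Dynkin formula for $C_c^2$ functions via a $C^2$-extension, treating the nonlocal tail of $\phi$ separately through the harmonic measure. First I would use that $U$ is Lipschitz to fix an open neighborhood $V$ of $\oU$ and, by a Whitney-type extension, a function $\Phi\in C_c^2(\Rd)$ with $\Phi=\phi$ on $V$. Put $h:=\phi-\Phi$, so that $h\equiv 0$ on $V$ and $\int_{\Rd}|h(y)|(1\wedge\nu(y))\,\dy<\infty$, since $\Phi$ is bounded with compact support and $1\wedge\nu$ is locally integrable. Because $\mathrm{supp}\,h\subset V^c$ while $\oU\subset V$ and $\dist(\oU,V^c)>0$, for $z\in\oU$ we have $h(z)=0$ and hence $Lh(z)=\int_{\Rd}h(w)\nu(z,w)\,\dw$ without any principal value. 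Using that $\nu$ is nonincreasing, that $\oU$ is bounded, and iterating the bound $\nu(r)\le C_2\nu(r+1)$ ($r\ge1$) from \textbf{(A2)} finitely many times, one checks
\[
\sup_{z\in\oU}\int_{\Rd}|h(w)|\,\nu(z,w)\,\dw\;\lesssim\;\int_{\Rd}|h(w)|\,(1\wedge\nu(w))\,\dw<\infty\,,
\]
so $Lh$ is bounded on $\oU$; since $L\Phi$ is bounded on $\Rd$ (as $\Phi\in C_c^2$ and $\int(|z|^2\wedge1)\nu(z)\,\dz<\infty$) and $L\phi=L\Phi+Lh$ on $U$, this already gives the first assertion. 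This uniform tail estimate is the main technical point.

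For the identity, fix $x\in\Rd$. If $x\notin U$ then $X_0=x\notin U$, so $\tau_U=0$ $\mpr^x$-a.s.\ and both sides of \eqref{eq:A1} vanish ($G_U(x,\cdot)\equiv0$). So let $x\in U$, whence $\phi(x)=\Phi(x)$. For $\Phi\in C_c^2$ the process $\Phi(X_t)-\Phi(x)-\int_0^tL\Phi(X_s)\,\ds$ is a martingale, so $\mE^x\Phi(X_{t\wedge\tau_U})-\Phi(x)=\mE^x\int_0^{t\wedge\tau_U}L\Phi(X_s)\,\ds$; letting $t\to\infty$ and using $\mE^x\tau_U<\infty$ (as $U$ is bounded, by Pruitt), the boundedness of $\Phi$ and $L\Phi$, dominated convergence, and the occupation-density meaning of $G_U$, we obtain
\[
\mE^x\Phi(X_{\tau_U})-\Phi(x)=\int_U G_U(x,y)\,L\Phi(y)\,\dy\,.
\]

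It remains to compute $\mE^x h(X_{\tau_U})$. By \eqref{eq:hm}, $\mE^x h(X_{\tau_U})=\int_{U^c}h(w)\,\omega_U^x(\dw)$; the part of $\omega_U^x$ coming from a continuous exit is carried by $\partial U\subset V$, where $h=0$, so by the Ikeda--Watanabe formula only the jump part contributes and $\mE^x h(X_{\tau_U})=\int_{U^c}h(w)P_U(x,w)\,\dw$. Inserting $P_U(x,w)=\int_U G_U(x,y)\nu(y,w)\,\dy$, applying Fubini--Tonelli (legitimate by the bound above together with $\int_U G_U(x,y)\,\dy=\mE^x\tau_U<\infty$), and using $h\equiv0$ on $U$ to recognize $\int_{\Rd}h(w)\nu(y,w)\,\dw=Lh(y)$ for $y\in U$, we get $\mE^x h(X_{\tau_U})=\int_U G_U(x,y)\,Lh(y)\,\dy$. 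Adding this to the previous display and using $\phi(x)=\Phi(x)$ and $L\phi=L\Phi+Lh$ on $U$ yields \eqref{eq:A1}. Both sides converge absolutely: $\int_U G_U(x,y)|L\phi(y)|\,\dy\le\|L\phi\|_{L^\infty(\oU)}\,\mE^x\tau_U<\infty$, and $\mE^x|\phi(X_{\tau_U})|\le\|\Phi\|_\infty+\int_U G_U(x,y)\int_{\Rd}|h(w)|\nu(y,w)\,\dw\,\dy<\infty$ by the same estimate. I expect the uniform tail bound on $Lh$ — equivalently, the boundedness of $L\phi$ on $\oU$ — to be the main obstacle, as it is precisely where the non-degenerate decay condition \textbf{(A2)} on $\nu$ and the compactly supported $C^2$ extension (hence the Lipschitz assumption on $U$) are needed.
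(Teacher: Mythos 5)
The paper does not prove this lemma internally; it cites \cite[Lemma~4.11]{MR4088505}, so there is no in-text proof to compare against. Your self-contained argument is correct and takes the natural route for such Dynkin-type formulas: extend $\phi$ to $\Phi\in C_c^2(\mR^d)$ agreeing with $\phi$ on a neighbourhood $V\supset\oU$ (a Stein/Whitney extension, legitimate because $U$ is Lipschitz and $\phi\in C^2(\oU)$), handle the $\Phi$-part by the classical Dynkin martingale together with $\mE^x\tau_U<\infty$ and the occupation-density identity $\mE^x\int_0^{\tau_U}f(X_s)\,\ds=\int_U G_U(x,y)f(y)\,\dy$, and handle the tail $h=\phi-\Phi$ via Ikeda--Watanabe and $P_U(x,\cdot)$. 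The uniform tail bound $\sup_{z\in\oU}\nu(z,w)\lesssim 1\wedge\nu(w)$ for $w\in V^c$, which you rightly flag as the crux, does follow: for $|w|$ bounded one uses $\nu(z,w)\le\nu(\rho)$ with $\rho=\dist(\oU,V^c)>0$ and the positivity of $1\wedge\nu$ on compacts; for $|w|$ large one uses monotonicity of $\nu$ and iterates \eqref{es:nuSc1} a fixed number ($\lceil\sup_{z\in\oU}|z|\rceil$) of times, giving a constant independent of $w$. Two small points worth making explicit: for $x\in\partial U$ the vanishing of both sides of \eqref{eq:A1} relies on boundary regularity of the Lipschitz set $U$ for isotropic unimodal processes (so $\tau_U=0$ $\mpr^x$-a.s.), not merely $X_0\notin U$; and your observation that $h\equiv0$ on $\partial U\subset V$ kills the continuous-exit part of $\omega_U^x$ is a nice shortcut — it means you need neither (VDC) nor $|\partial U|=0$ for the auxiliary set $U$, only that $h$ vanish on a neighbourhood of $\oU$.
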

The following Hardy--Stein formula
extends \cite[Lemma 8]{MR3251822} and
\cite[Lemma 4.12]{MR4088505},
where
it was proved, for the fractional Laplacian
and $p>1$, and
for
unimodal operators $L$ and $p=2$, respectively.
	\begin{proposition}\label{prop:BDL-0}
		If $u\colon\mathbb R^d\to\mathbb R$ is $L$-harmonic in $D,$
$p >1 $, and $U \subset\subset D$ is open Lipschitz, then
		\begin{eqnarray}\label{eq:lemBDL}
		\mE^x |u(X_{\tau_U})|^p &=& |u(x)|^p + \int_U G_U(x,y)\pord F_p
(u(y),u(z)) \nu(y,z)\, \dz \dy,\quad x\in U .
		\end{eqnarray}
	\end{proposition}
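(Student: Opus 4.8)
The plan is to mimic the strategy of \cite[Lemma 4.12]{MR4088505} but with the function $|x|^p$ and its second-order Taylor remainder $F_p$ replacing the quadratic case, leveraging the martingale and Dynkin-type machinery already assembled. The key algebraic input is the identity $F_p(u(y),u(z)) = |u(z)|^p - |u(y)|^p - p\,u(y)^{\langle p-1\rangle}(u(z)-u(y))$, which lets us rewrite $L(|u|^p)(y)$ pointwise for $y\in D$: since $u$ is harmonic, $Lu(y)=0$, so for $y\in D$ we get $L(|u|^p)(y) = \int_{\Rd}\big(|u(z)|^p - |u(y)|^p\big)\nu(y,z)\,\dz = \int_{\Rd} F_p(u(y),u(z))\,\nu(y,z)\,\dz$, where the cancellation of the linear term uses $Lu(y)=0$ (the term $p\,u(y)^{\langle p-1\rangle}\int(u(z)-u(y))\nu(y,z)\dz$ vanishes, at least after suitable principal-value interpretation). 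The desired formula \eqref{eq:lemBDL} is then precisely the Dynkin formula \eqref{eq:A1} applied to $\phi = |u|^p$ on the open Lipschitz set $U\subset\subset D$.

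First I would fix $U\subset\subset D$ open Lipschitz and recall from \cite[Section 4]{MR4088505} that $u$ harmonic in $D$ implies $u\in L^1_{loc}(\Rd)\cap C^2(D)$ with $Lu\equiv 0$ on $D$; I would also want to reduce to the case where $u$ is bounded on a neighborhood of $\oU$, which can be arranged by choosing an intermediate set $U\subset\subset U'\subset\subset D$ and noting $u\in C^2(\overline{U'})$, hence bounded there. Then $\phi := |u|^p$ satisfies $\phi\in C^2(\oU)$ (since $t\mapsto |t|^p$ is $C^2$ for $p>1$ and $u\in C^2$), and I must verify the integrability hypothesis of Lemma~\ref{lem:dynk}, namely $\int_{\Rd}|\phi(y)|(1\wedge\nu(y))\,\dy<\infty$. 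This is the point where the growth of $u$ at infinity enters: $u\in L^1_{loc}$ gives control near the origin after multiplying by $1\wedge \nu$, but for large $|y|$ one needs $|u(y)|^p(1\wedge\nu(y))$ integrable; in the quadratic case \cite{MR4088505} handled the analogous condition, and here one uses $|u|^p \approx |u^{\langle p/2\rangle}|^2$ together with the assumption (implicit in the standing hypotheses, cf. the spaces $\vdp$) that guarantees the tail integrability, or one simply adds this as a tacit requirement exactly as in \cite{MR4088505}. I would state this reduction cleanly and then invoke Lemma~\ref{lem:dynk}.

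Next I would compute $L\phi(y)$ for $y\in U$. Writing out \eqref{eq:L-def} for $\phi=|u|^p$ and using the pointwise Taylor expansion
\[
|u(z)|^p = |u(y)|^p + p\,u(y)^{\langle p-1\rangle}(u(z)-u(y)) + F_p(u(y),u(z)),
\]
we split $L\phi(y)$ into the contribution of the linear term, which equals $p\,u(y)^{\langle p-1\rangle}\,Lu(y) = 0$ by harmonicity (here one uses that the principal-value limit for $Lu(y)$ exists since $u\in C^2$ near $y$, and that the $F_p$-part is absolutely integrable because $F_p(u(y),u(z))\approx (u(z)-u(y))^2(|u(y)|\vee|u(z)|)^{p-2}$ is $O(|z-y|^2)$ near $y$ by the $C^2$ bound and is dominated by $|u(z)|^p+|u(y)|^p$ for large $|z-y|$). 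Hence $L\phi(y) = \int_{\Rd} F_p(u(y),u(z))\,\nu(y,z)\,\dz$ for $y\in U$, and substituting into \eqref{eq:A1} yields
\[
\mE^x|u(X_{\tau_U})|^p - |u(x)|^p = \int_U G_U(x,y)\left(\int_{\Rd} F_p(u(y),u(z))\,\nu(y,z)\,\dz\right)\dy,
\]
which is exactly \eqref{eq:lemBDL}. The absolute convergence of the double integral, and the fact that $L\phi$ is bounded on $\oU$, come for free from Lemma~\ref{lem:dynk} once its hypotheses are checked.

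The main obstacle I anticipate is the careful justification of two interlocking integrability facts: (a) that $\phi=|u|^p$ satisfies the global hypothesis $\int_{\Rd}|\phi(y)|(1\wedge\nu(y))\,\dy<\infty$ needed to apply Lemma~\ref{lem:dynk}, which forces some control on the growth of $u$ at infinity (resolved either by a standing assumption inherited from \cite{MR4088505} or by the finiteness of the relevant Sobolev--Bregman form), and (b) the interchange of the principal-value limit with the Taylor splitting that legitimizes $L(|u|^p)(y) = \int F_p(u(y),u(z))\nu(y,z)\dz$ with an \emph{absolutely} convergent integral on the right. Both are handled by the estimate $F_p(a,b)\approx(b-a)^2(|a|\vee|b|)^{p-2}$ from \eqref{elem-ineq}, which gives the quadratic decay in $|z-y|$ near the diagonal (absorbing the singularity of $\nu$ via $\int(|z|^2\wedge 1)\nu(z)\,\dz<\infty$) and at worst $p$-th power growth far away; the argument is a routine but slightly delicate adaptation of \cite[Lemma 4.12]{MR4088505}, and I would present it at the same level of detail as there.
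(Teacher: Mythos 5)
Your plan of reducing to the Dynkin formula applied to $\phi=|u|^p$ and computing $L\phi$ by subtracting the harmonic term $p\,u^{\langle p-1\rangle}Lu\equiv 0$ is indeed the core of the paper's argument for $p\ge 2$. However, there are two genuine gaps.

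First, you assert that $t\mapsto |t|^p$ is $C^2$ for all $p>1$, hence $\phi=|u|^p\in C^2(\oU)$. This is false for $1<p<2$: the second derivative of $|t|^p$ is $p(p-1)|t|^{p-2}$, which blows up at $t=0$, and a harmonic function can certainly vanish somewhere in $U$. Consequently your direct invocation of Lemma~\ref{lem:dynk} with $\phi=|u|^p$ is not justified in that range. The paper handles $p\in(1,2)$ by a separate regularization: it replaces $|u|^p$ by $\phi_\varepsilon=(u^2+\varepsilon^2)^{p/2}\in C^2(D)$, introduces the corresponding divergence $F_p^{(\varepsilon)}$, applies Lemma~\ref{lem:dynk} to $\phi_\varepsilon$, and then lets $\varepsilon\to 0^+$ using dominated convergence for the left-hand side, the comparison $0\le F_p^{(\varepsilon)}\le \tfrac{1}{p-1}F_p$, and Fatou plus dominated convergence for the interior integral. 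You need this extra layer, or some substitute for it, to get the full range of $p$.

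Second, you treat the integrability hypothesis $\int_{\Rd}|u(y)|^p(1\wedge\nu(y))\,\dy<\infty$ as something to be ``added as a tacit requirement'' or inherited from elsewhere, but the proposition carries no such hypothesis: it asserts an identity in $[0,\infty]$. The paper's proof therefore begins with a dichotomy. If $\int_{U^c}|u(z)|^p\nu(x,z)\,\dz=\infty$, equivalently $\mE^x|u(X_{\tau_U})|^p=\infty$, one must separately show that the right-hand side of \eqref{eq:lemBDL} is infinite as well; the paper does this by picking $y\in U$ with $|u(y)|>0$ (or treating $u\equiv 0$ on $U$ separately), isolating the set $A=\{z:|u(z)|\ge(2+\sqrt2)|u(y)|\}$, and using \eqref{elem-ineq} to bound $\int F_p(u(y),u(z))\nu(y,z)\,\dz$ from below by a divergent integral. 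Only after disposing of this case does one assume finiteness, which then legitimizes Lemma~\ref{lem:dynk}. Without this step your argument proves the identity only under an extra hypothesis that the proposition does not impose.
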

\begin{proof}
As a guideline, the result follows by taking $\phi= |u|^p$ in the Dynkin formula \eqref{eq:A1}. We combine the methods of \cite{MR3251822} and \cite{MR4088505}. By \cite[Lemma 4.9]{MR4088505} if $u$ is harmonic in $D$, then $u\in C^2(D)$. Thus, in particular, $|u|^p$ is bounded in a neighborhood of $\overline{U}$.
Let $x\in U$. Consider the complementary cases:
\[ {\rm (i)}\, \int_{ U^c}|u(z)|^p
\nu(x,z)\,\dz=\infty, \quad \mbox{ or } \quad {\rm (ii)}\, \int_{U^c}|u(z)|^p
\nu(x,z)\,\dz<\infty.\]
Since $|u|^p$ is bounded in a neighborhood of $\overline{U}$,
 this dichotomy can be reformulated as
\[ {\rm (i)}\quad\mathbb E^x |u(X_{\tau_U})|^p=\infty, \quad \mbox{ or } \quad {\rm (ii)}\quad \mathbb E^x |u(X_{\tau_U})|^p<\infty,\]
see the end of the proof of \cite[Lemma 4.11]{MR4088505} and \eqref{eq:pjP}.

In case (i), we show that
 the right-hand side of \eqref{eq:lemBDL} is infinite as well.
Assume first that $|u|>0$ on a subset of $U$ of positive measure. Pick $y\in U$ satisfying $|u(y)|>0$, and let $A = \{z\in
 {U^c}: |u(z)| \geq (2+\sqrt{2})|u(y)|\}$.
Now, since $x,y\in U$ are fixed and {$\nu$ is positive, continuous, and satisfies \eqref{es:nuSc1}}, we have $\nu(x,z) \approx \nu(y,z)$ for $z\in U^c$. Therefore, by (i),
$$
\int_{ U^c}|u(z)|^p
\nu(y,z)\,\dz=\infty
$$
as well. Furthermore,
$$\int_{U^c\setminus A} |u(z)|^p \nu(y,z)\, \dz\approx\int_{U^c\setminus A} |u(z)|^p \nu(x,z)\, \dz \leq (2+\sqrt 2)^{{p}}|u(y)|^p \nu(x,U^c) < \infty,$$
and consequently we must have
\[\int_A |u(z)|^p \nu(y,z)\,{\rm d}z = \infty.\]
By the definition of $A$, for $z\in A$ we have
\begin{equation}\label{eq:sets}
(u(z) - u(y))^2 \geq \frac 12 u(z)^2 \quad \text{ and } \quad |u(z)| \geq |u(y)|.
\end{equation}
By \eqref{elem-ineq} and \eqref{eq:sets} we therefore obtain
\begin{align*}
\int_{\mR^d} F_p(u(y),u(z)) \nu(y,z)\, \dz &\approx \int_{\mR^d} (u(z) - u(y))^2(|u(y)|
 \vee |u(z)|)^{p-2} \nu(y,z)\, \dz\\
&\geq \int_A (u(z) - u(y))^2|u(z)|^{p-2}\nu(y,z) \, \dz
\geq \tfrac 12\int_A |u(z)|^p \nu(y,z)\, \dz =\infty.
\end{align*}
This is true for all points $y$ in a set of positive Lebesgue measure, which proves that the right-hand side of \eqref{eq:lemBDL} is infinite. If, on the other hand, $u\equiv 0$ in $U$, then $F_p(u(y),u(z)) = c|u(z)|^p$ for all $z\in \mR^d$, $y\in U$, and by (i)
the right-hand side of \eqref{eq:lemBDL} is infinite again.

We now consider the case (ii). Thus $\mathbb E^x|u(X_{\tau_U})|^p<\infty$ and the integrability condition of Lemma \ref{lem:dynk} is satisfied for $\phi = |u|^p$. We will first prove \eqref{eq:lemBDL} for $p\geq2$. Then $\phi$ is of class $C^2$ on $D$,
 so we are in a position to use Lemma \ref{lem:dynk} and we get
 \begin{equation}\label{eq:h-s}
 \mathbb E^x |u(X_{\tau_U})|^p= |u(x)|^p +\int_U G_U(x,y)
 L|u|^p(y)\,\dy,\quad x\in U.
 \end{equation}
The integral on the right-hand side is absolutely convergent. Furthermore,
since $u$ is $L$-harmonic,
\begin{eqnarray*}
L|u|^p(y)&=& L|u|^p(y)- p u(y)^{\langle p-1\rangle}Lu(y)\\
&=& \lim_{\epsilon\to 0+}\int_{|z-y|>\epsilon}
(|u(z)|^p- |u(y)|^p -p u(y)^{\langle p-1\rangle}(u(z)-u(y)))\nu(y,z)\,\dz\\
&=& \int_{\mathbb R^d} F_p(u(y),u(z))\nu(y,z)\,\dz\geq 0.
\end{eqnarray*}
 Inserting this to \eqref{eq:h-s} gives the statement.

 When $p\in (1,2)$, the function $\R \ni r\mapsto |r|^p$ is not twice differentiable, and
 the above argument needs to be modified. We work under the assumption (ii), and we follow the proof of \cite[Lemma 3]{MR3251822}.
Consider $\varepsilon\in \R$ and the function $\R^d\ni x\mapsto (x^2+\varepsilon^2)^{p/2}$. Let
\begin{equation}\label{eq:Feps}
F_p^{(\varepsilon)}(a,b) =
(b^2+\varepsilon^2)^{p/2}-(a^2+\varepsilon^2)^{p/2}
 - pa(a^2+\varepsilon^2)^{(p-2)/2}(b-a)\,, \quad a,b\in \R.
\end{equation}
Since $1<p<2$, by
\cite[Lemma 6]{MR3251822},
\begin{equation}
 \label{eq:ub}
0\le F_p^{(\varepsilon)}(a,b)\leq \frac{1}{p-1}F_p(a,b)\,,\qquad \varepsilon, a, b\in\R\,,
\end{equation}
Let $\varepsilon>0$. We note that $(u^2+\varepsilon^2)^{p/2}\in C^2(D)$. Also, the integrability condition in Lemma~\ref{lem:dynk} is satisfied for
$\phi=(u^2+\varepsilon^2)^{p/2}$ since it is satisfied for $\phi=|u|^p$ by (ii), and
\begin{equation}\label{eq:bb}
(u^2+\varepsilon^2)^{p/2} \leq (|u|+\varepsilon)^p\le 2^{p-1}( |u|^p + \varepsilon^p),
\end{equation}
see also \eqref{eq:Lmc}. Furthermore, $\mathbb E_{x}(u(X_{\tau_U})^2+\varepsilon^2)^{p/2}<\infty$.
As in the first part of the proof,
\begin{align}
L(u^2+\varepsilon^2)^{p/2}(y)
&=
L(u^2+\varepsilon^2)^{p/2}(y)
-p u(y)(u(y)^2+\varepsilon^2)^{(p-2)/2}Lu(y)\label{eq:subtr}\\
&=
\int_\Rd
F_p^{(\varepsilon)}(u(y),u(z))\nu(y,z)\,\dz,\nonumber
\end{align}
therefore by Lemma~\ref{lem:dynk},
\begin{equation}\label{eq:epsto0}
\mathbb E_{x}(u(X_{\tau_U})^2+\varepsilon^2)^{p/2}=(u(x)^2+\varepsilon^2)^{p/2}+\int_U G_U(x,y)
 \int_\Rd F_p^{(\varepsilon)}(u(y),u(z))\nu(y,z)\,\dz\dy.
\end{equation}
From the Dominated Convergence Theorem the left-hand side of \eqref{eq:epsto0} goes to $\mathbb E_x|u(X_{\tau_U})|^p<\infty$ as $\varepsilon\to 0^+$.
Of course, $F_p^{(\varepsilon)}(a,b)\to F_p(a,b)$ as $\varepsilon\to 0^+$.
Furthermore, by Fatou's
lemma and \eqref{eq:epsto0},
\begin{eqnarray*}
\int_U G_U(x,y)\int_{\mathbb R^d}F_p(u(y),u(z))\nu(y,z)\,\dz\dy
&\leq& \liminf_{\varepsilon\to 0^+}\int_U G_U(x,y)\int_{\mathbb R^d}F_p^{(\varepsilon)} (u(y),u(z))\nu(y,z)\,\dz\dy \\
&=& \mathbb E_x|u(X_{\tau_U})|^p-|u(x)|^p<\infty.
\end{eqnarray*}
By
\eqref{eq:ub} and the Dominated Convergence Theorem,
 we obtain \eqref{eq:lemBDL} for $p\in (1,2)$.
 \end{proof}

As a consequence, we obtain the
the Hardy--Stein identity for $D$, generalizing and strengthening \cite[(16)]{MR3251822} and \cite[Theorem 2.1]{MR4088505}.
\begin{proposition}\label{prop:hardy-stein} Let $p>1$ be given.
If $u$ is $L$-harmonic in $D$ and $x\in D$, then
		\begin{eqnarray}\label{eq:BDL}
\sup_{x\in U\subset\subset D} \mE^x |u(X_{\tau_U})|^p &=&
|u(x)|^p + \int_D G_D(x,y)\pord F_p(u(y),u(z)) \nu(y,z)\, \dz \dy.
		\end{eqnarray}
If $u$ is regular $L$-harmonic in $D$, then the left-hand side can be replaced with $\mE^x |u(X_{\tau_D})|^p$.
\end{proposition}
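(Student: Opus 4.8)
The plan is to derive Proposition~\ref{prop:hardy-stein} from the localized Hardy--Stein formula \eqref{eq:lemBDL} by exhausting $D$. First I fix an increasing sequence of open Lipschitz sets $U_n$ with $x\in U_1$, $U_n\subset\subset U_{n+1}$ and $\bigcup_n U_n=D$, and put $\kappa(y):=\int_{\mR^d}F_p(u(y),u(z))\nu(y,z)\,\dz\in[0,\infty]$. By Proposition~\ref{prop:BDL-0}, $\mE^x|u(X_{\tau_{U_n}})|^p=|u(x)|^p+\int_{U_n}G_{U_n}(x,y)\,\kappa(y)\,\dy$ for every $n$. Since $F_p\ge 0$ we have $\kappa\ge 0$, and $\mathbf 1_{U_n}(y)G_{U_n}(x,y)\kappa(y)$ increases pointwise to $\mathbf 1_D(y)G_D(x,y)\kappa(y)$ as $n\to\infty$, because $\mathbf 1_{U_n}\uparrow\mathbf 1_D$ and $G_{U_n}(x,\cdot)\uparrow G_D(x,\cdot)$ --- the latter a standard consequence of $\tau_{U_n}\uparrow\tau_D$ together with domain monotonicity of $G$. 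The monotone convergence theorem then shows that the sequence $\mE^x|u(X_{\tau_{U_n}})|^p$ is nondecreasing and converges to $|u(x)|^p+\int_D G_D(x,y)\,\kappa(y)\,\dy$, i.e.\ to the right-hand side of \eqref{eq:BDL}.

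Next I identify this limit with the supremum in \eqref{eq:BDL}. The functional $U\mapsto\mE^x|u(X_{\tau_U})|^p$ is nondecreasing on the family of open $U$ with $x\in U\subset\subset D$: for Lipschitz $U$ this follows from Proposition~\ref{prop:BDL-0} and domain monotonicity of $G$ as above, while for a general such $U$ one picks a Lipschitz $U'$ with $\overline U\subset U'\subset\subset D$ (e.g.\ a suitable finite union of balls) and uses the optional-stopping inequality $\mE^x|u(X_{\tau_U})|^p\le\mE^x|u(X_{\tau_{U'}})|^p$, which holds because $u(X_{t\wedge\tau_{U'}})$ is a $\mpr^x$-martingale (harmonicity of $u$ in $D\supset U'$) and conditional Jensen applies to $u(X_{\tau_U})=\mE^x[u(X_{\tau_{U'}})\mid\mathcal F_{\tau_U}]$ --- the case $\mE^x|u(X_{\tau_{U'}})|^p=\infty$ being trivial since the supremum is then already infinite. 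As every such $U$ has $\overline U$ compact in $D$, hence $\overline U\subset U_n$ for all large $n$, the $U_n$ are cofinal, so $\sup_{x\in U\subset\subset D}\mE^x|u(X_{\tau_U})|^p=\lim_n\mE^x|u(X_{\tau_{U_n}})|^p$, which together with the first step proves \eqref{eq:BDL}.

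For the last assertion, let $u$ be regular $L$-harmonic in $D$. Then $u(y)=\mE^y u(X_{\tau_D})$ for every $y\in\mR^d$, the expectation being absolutely convergent: this is the hypothesis for $y\in D$, and it is trivial for $y\in D^c$, since then $\tau_D=0$ under $\mpr^y$. Using $\tau_D=\tau_U+\tau_D\circ\theta_{\tau_U}$ (valid $\mpr^x$-a.s.\ because $U\subset D$) and the strong Markov property, we get $u(X_{\tau_U})=\mE^x[u(X_{\tau_D})\mid\mathcal F_{\tau_U}]$ for every open $U$ with $x\in U\subset\subset D$; hence $(u(X_{\tau_{U_n}}))_n$ is a uniformly integrable martingale closed by $u(X_{\tau_D})\in L^1(\mpr^x)$, and conditional Jensen gives $\mE^x|u(X_{\tau_U})|^p\le\mE^x|u(X_{\tau_D})|^p$ for all such $U$, so $\sup_{x\in U\subset\subset D}\mE^x|u(X_{\tau_U})|^p\le\mE^x|u(X_{\tau_D})|^p$. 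For the opposite inequality I would use quasi-left-continuity of the L\'evy process $(X_t)$: since $\tau_{U_n}\uparrow\tau_D$ and $D$ is exited by a jump from a point of $D$ (see \eqref{eq:always-jump}), $X_{\tau_{U_n}}=X_{\tau_D}$ for all sufficiently large $n$, $\mpr^x$-almost surely, so $u(X_{\tau_{U_n}})\to u(X_{\tau_D})$ a.s.; Fatou's lemma then gives $\mE^x|u(X_{\tau_D})|^p\le\liminf_n\mE^x|u(X_{\tau_{U_n}})|^p=\lim_n\mE^x|u(X_{\tau_{U_n}})|^p$. With \eqref{eq:BDL} this shows that the left-hand side of \eqref{eq:BDL} may be replaced by $\mE^x|u(X_{\tau_D})|^p$.

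The one genuinely delicate step is the almost sure convergence $u(X_{\tau_{U_n}})\to u(X_{\tau_D})$ feeding the Fatou argument: this is where fine path properties of $(X_t)$ and of $D$ enter (quasi-left-continuity, and that the first exit from $D$ occurs by a jump originating in $D$), and where one may need an auxiliary statement as in \cite{MR4088505}. Everything else is a routine combination of Proposition~\ref{prop:BDL-0} with monotone convergence, optional stopping, and Jensen's inequality, the required integrability being already provided by the harmonicity of $u$ and by Proposition~\ref{prop:BDL-0} itself.
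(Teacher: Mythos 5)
Your treatment of the first part is correct and matches the paper: monotonicity of both sides of \eqref{eq:lemBDL} in $U$ (via domain monotonicity of $G$ and nonnegativity of $F_p$), cofinality of Lipschitz subsets among $U\subset\subset D$, and the monotone convergence theorem. The ``$\leq$'' half of the regular-harmonic case (closed martingale plus conditional Jensen) is also fine.

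The gap is in the ``$\geq$'' half, specifically your justification of the almost sure convergence $u(X_{\tau_{U_n}}) \to u(X_{\tau_D})$. You invoke \eqref{eq:always-jump} to say that the exit from $D$ is by a jump from a point of $D$, and then conclude that $X_{\tau_{U_n}} = X_{\tau_D}$ eventually a.s. But \eqref{eq:always-jump} requires the extra hypotheses (VDC) and $|\partial D| = 0$, which Proposition~\ref{prop:hardy-stein} does not assume --- here $D$ is an arbitrary open set. Without these, the process may exit $D$ continuously on an event of positive probability; on that event $X_{\tau_{U_n}} \to X_{\tau_D} \in \partial D$, but $u(X_{\tau_{U_n}}) \to u(X_{\tau_D})$ can fail, since $u$ is only assumed continuous inside $D$ and merely Borel on $D^c$, with no regularity across $\partial D$. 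Moreover, even under (VDC), the assertion ``exit by a jump a.s.\ hence $\tau_{U_n}=\tau_D$ eventually'' implicitly uses that such exit times are totally inaccessible, which is an additional step you would need to supply. The paper avoids all of these issues: it obtains a.s.\ and $L^1$ convergence $u(X_{\tau_{U_n}}) \to Z$ from L\'evy's martingale convergence theorem (the martingale is closed by the integrable $u(X_{\tau_D})$), and then identifies $Z = u(X_{\tau_D})$ by proving $\sigma\big(\bigcup_n \mathcal F_{\tau_{U_n}}\big) = \mathcal F_{\tau_D}$, which follows from quasi-left-continuity of the \emph{filtration} (via Kallenberg and Dellacherie--Meyer) rather than from any path argument or continuity of $u$. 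That route works for arbitrary open $D$ and is the one you should use; your remaining Fatou/Jensen step is then sound as written.
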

\begin{proof}
As noted in \cite[Remark 4.4]{MR4088505},
 $\{u(X_{\tau_U}), \, U\subset D\}$
 is a martingale ordered by the inclusion of open subsets of $D$. By domain monotonicity of the Green function and the nonnegativity of $F_p$, both sides of \eqref{eq:lemBDL} increase if $U$ increases.
Since every open set $U\subset\subset D$ is included in an open Lipschitz set $U\subset\subset D$, the supremum in \eqref{eq:BDL} may be taken over open Lipschitz sets $U\subset\subset D$. The first part of the statement follows from the monotone convergence theorem.

If additionally $u$ is regular harmonic, then
		\begin{eqnarray}\label{eq:BDLr}
\mE^x |u(X_{\tau_D})|^p &=&
|u(x)|^p + \int_D G_D(x,y)\pord F_p(u(y),u(z)) \nu(y,z)\, \dz \dy.
		\end{eqnarray}
This is delicate. Indeed, by \cite[Remark 4.4]{MR4088505}, the martingale $\{u(X_{\tau_U}), \, U\subset \subset D\}$ is closed by the integrable random variable $u(X_{\tau_D})$.
Therefore L\'evy's Martingale Convergence Theorem yields that $u(X_{\tau_U})$ converges almost surely, and in $L^1$ to a random variable $Z$, as $U\uparrow D$, and we have $Z = \mE^x[u(X_{\tau_D})|\sigma(\bigcup_{U\subset\subset D}\mathcal F_{\tau_U})]$, see, e.g., Dellacherie and Meyer \cite[Theorem 31 a,b, p. 26]{MR745449}. We claim that the $\sigma$-algebra
$\sigma(\bigcup_{U\subset\subset D}\mathcal F_{\tau_U})$ is equal to $\mathcal F_{\tau_D}$. Indeed, by Proposition 25.20 (i),(ii), and Proposition 25.19 (i),(ii) in Kallenberg \cite[p. 501]{MR1876169}, the filtration of $(X_t)$ is quasi-left continuous. Therefore $\tau_U$ increases to $\tau_D$ as $U$ increases to $D$, and our claim follows from Dellacherie and Meyer \cite[Theorem 83, p. 136]{MR521810}. Consequently, $Z = u(X_{\tau_D})$. Now, if $\sup_{x\in U\subset\subset D} \mE^x|u(X_{\tau_U})|^p < \infty$, then \cite[Theorem 31 c, p. 26]{MR745449} yields \eqref{eq:BDLr}.
Else, if the supremum is infinite, then $\mE^x |u(X_{\tau_D})|^p = \infty$ by Jensen's inequality, and \eqref{eq:BDLr} holds, too.
\end{proof}
We note in passing that the case $p=2$ of \eqref{eq:BDLr} was stated for less general sets $D$ in the first displayed formula following (5.2) in the proof of Theorem 2.3 in \cite{MR4088505}. Accordingly, the proof in \cite{MR4088505} was easier.

\section{The Douglas identity}\label{sec:mainthm}

We now present our main theorem. It is a counterpart of \eqref{eq:ident-l2}
with square increments of the function replaced by ``increments'' measured
in terms $F_p$ or $H_p.$
\begin{theorem}[\bf Douglas identity] \label{th:main}
Let $p>1$. Assume that the L\'{e}vy measure $\nu$ satisfies
{\bf (A1)} and {\bf (A2)}, $D\subset \mR^d$ is open, $D^c$ satisfies {\rm (VDC)}, and $|\partial D| = 0$.
\begin{enumerate}
\item[(i)]Let $g\colon D^c\to\R$ be such that
\(\hpd[g]<\infty.\)
Then $P_D[g]$ is well-defined and satisfies
\begin{equation}
\label{eq:ident-fp}
\hpd[g]=\epd[P_D[g]].
\end{equation}
\item[(ii)]Furthermore, if $u\colon \mR^d \to \mR$ satisfies $\epd[u] < \infty$, then $\hpd[u|_{D^c}] < \infty$.
\end{enumerate}
\end{theorem}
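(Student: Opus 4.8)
\textbf{Plan for the proof of Theorem~\ref{th:main}.}

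The strategy is to leverage the Hardy--Stein identity of Proposition~\ref{prop:hardy-stein} together with the variance-type formula for $F_p$ in Lemma~\ref{lem:aaa}, exactly as in the quadratic case of \cite{MR4088505}, but now with $F_p$ in place of the square increment. First I would reduce $D^c$-integrals against $\gamma_D$ to $D$-integrals against $P_D$ using the Fubini identity \eqref{eq:rg2}, so that $\hpd[g]$ gets expressed via the Poisson kernel. The heart of the matter is to relate, for $u=P_D[g]$ and $x\in D$, the quantity $\mE^x F_p(u(x),u(X_{\tau_D}))$ — which by Lemma~\ref{lem:aaa}(i) equals $\mE^x|u(X_{\tau_D})|^p-|u(x)|^p$, the ``boundary'' side controlled by Proposition~\ref{prop:hardy-stein} (in the regular-harmonic, i.e. $|\partial D|=0$ and (VDC), situation where \eqref{eq:always-jump} holds) — to the energy form $\epd[u]$. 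Concretely, I expect the proof to run by: (a) establishing that $u=P_D[g]$ is regular $L$-harmonic in $D$ and that the relevant expectations converge (using $\hpd[g]<\infty$, the comparison \eqref{eq:apHp2}, and the $p=2$ extension theory of \cite{MR4088505} applied to $g^{\langle p/2\rangle}$ to get $P_D[g^{\langle p/2\rangle}]\in L^2$, hence $P_D[g]$ well-defined); (b) writing $\epd[u]$ by splitting the domain $\R^d\times\R^d\setminus D^c\times D^c$ into $D\times\R^d$ and $D^c\times D$ and using the harmonicity-based pointwise computation $L|u|^p(y)=\int_{\R^d}F_p(u(y),u(z))\,\nu(y,z)\,\dz$ from the proof of Proposition~\ref{prop:BDL-0}; (c) integrating against $G_D(x,y)$ and recognizing the Hardy--Stein right-hand side.

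More precisely, for part (i) I would integrate the Hardy--Stein identity \eqref{eq:BDLr} against the harmonic measure started from a reference configuration, or rather integrate both sides of \eqref{eq:BDLr} in a way that reconstructs $\hpd[g]$. The cleanest route mimics \cite[Theorem~2.3]{MR4088505}: use Lemma~\ref{lem:aaa}(ii) with $a=g(w)$, $X=u(X_{\tau_D})$ under $\mE^x$ replaced by the "$w$-shifted" picture — i.e. exploit that $u(x)=\int_{D^c}g(z)P_D(x,z)\,\dz$ so $u$ is the $\omega_D^x$-mean of $g$ — to get, for $w\in D^c$,
\begin{equation*}
\int_D \nu(w,x)\,\mE^x F_p(g(w),g(X_{\tau_D}))\,\dx
= \int_D \nu(w,x)\Big(F_p(g(w),u(x)) + \big(\mE^x|u(X_{\tau_D})|^p-|u(x)|^p\big)\Big)\dx.
\end{equation*}
Then I would integrate $\dw$ over $D^c$: the left side becomes (after Fubini and \eqref{eq:rg2}) essentially $\iint_{D^c\times D^c}F_p(g(w),g(z))\gamma_D(w,z)\,\dw\dz=p\,\hpd[g]$; on the right side the second term, by Proposition~\ref{prop:hardy-stein} and Fubini, reproduces $\int_{D^c}\nu(w,x)\int_D G_D(x,y)\int_{\R^d}F_p(u(y),u(z))\,\nu(y,z)\,\dz\,\dy$ integrated appropriately, giving the $D\times\R^d$ part of $p\,\epd[u]$; and the first term $\int_{D^c}\int_D \nu(w,x)F_p(g(w),u(x))\,\dx\,\dw$ is precisely the $D^c\times D$ (equivalently $D\times D^c$, by relabeling and the symmetrization \eqref{eq:wH}) part of $p\,\epd[u]$. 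Summing and using the symmetrized form \eqref{eq:Ep} to combine the two pieces into the single integral over $\R^d\times\R^d\setminus D^c\times D^c$ yields \eqref{eq:ident-fp}. For part (ii), given $u$ with $\epd[u]<\infty$, set $g=u|_{D^c}$; by \eqref{eq:apEp2}--\eqref{eq:apHp2} it suffices to show $\form{E}{D}{}[u^{\langle p/2\rangle}]<\infty \Rightarrow \form{H}{D}{}[(u|_{D^c})^{\langle p/2\rangle}]<\infty$, which is exactly the trace half of the quadratic Douglas theorem \cite[Theorem~2.3]{MR4088505} applied to $v=u^{\langle p/2\rangle}$ — one checks $v\in\vd$ and concludes $v|_{D^c}\in\spc{X}{D}{}$, i.e. $\form{H}{D}{}[v|_{D^c}]<\infty$.

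The main obstacle I anticipate is \textbf{justifying the Fubini interchanges and the absolute convergence} of the various iterated integrals when only $\hpd[g]<\infty$ is assumed — in particular, that the ``cross term'' $\iint_{D\times D^c}$ of $\epd[u]$ is finite and that the boundary integral $\int_{D^c}\nu(w,x)\,\mE^x|u(X_{\tau_D})|^p\,\dx\,\dw$ is not spuriously infinite. This is where the hypotheses $D^c\in\text{(VDC)}$, $|\partial D|=0$, and assumptions \textbf{(A1)}--\textbf{(A2)} earn their keep: (VDC) and $|\partial D|=0$ guarantee \eqref{eq:always-jump} so that $u(X_{\tau_D})$ has the density $P_D(x,\cdot)$ on $D^c$ with no ``continuous exit'' defect, while \textbf{(A2)} provides the kernel comparison \eqref{eq:aaaa}--\eqref{eq:pjP} needed to pass between $\nu(w,x)$ and $\nu(w,y)$ for $x,y\in D$ and to control $P_D$ by $\nu\cdot\mE^x\tau_D$. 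I would handle the convergence by first proving \eqref{eq:ident-fp} for bounded $g$ with compact support (or for $g$ truncated and the integrals restricted to $U\subset\subset D$, then passing to the limit via the monotone convergence already built into Proposition~\ref{prop:hardy-stein} and nonnegativity of $F_p$), and only afterwards removing the restrictions by monotone/dominated convergence, using $\hpd[g_n]\to\hpd[g]$ and the comparison estimates. The nonnegativity of $F_p$ and $H_p$ is the crucial structural fact that makes all these limits monotone and hence painless.
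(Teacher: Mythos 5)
Your plan follows the paper's proof almost exactly: the paper applies Lemma~\ref{lem:aaa}(ii) with $a=u(w)$, $X=u(X_{\tau_D})$ under $\mE^x$, then Lemma~\ref{lem:aaa}(i) and Proposition~\ref{prop:hardy-stein} for the Hardy--Stein term, and Fubini--Tonelli (everything being nonnegative, so no truncation is needed) recombines the two pieces into $p\,\epd[u]$; part~(ii) is handled exactly as you propose, via Lemma~\ref{lem:quad} and the $p=2$ trace theorem of \cite{MR4088505}. The only minor differences are that the paper secures well-definedness of $P_D[g]$ through the dedicated Lemma~\ref{lem:poisint} rather than via the $p=2$ comparison you suggest (a viable shortcut), and that the truncation step you keep as a backstop is superfluous --- and for $p\in(1,2)$ would be awkward, since $F_p$ need not decrease under truncation of $g$.
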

Here, as usual, $u|_{D^c}$ is the restriction of $u$ to $D^c$, but in what follows we will abbreviate:
$$
\hpd[u]:=\hpd[u|_{D^c}]
$$
and
$$
P_D[u|_{D^c}] = P_D[u].
$$

\begin{remark}\label{rem:pf1.8}
The more explicit expression of the Douglas identity
\eqref{eq:nsDi} stated in the Introduction follows from \eqref {eq:ident-fp},
\eqref{eq:Ep} and \eqref{eq:sHp}.
\end{remark}
To the best of our knowledge the present Douglas identities are completely new, and our approach is original.
The proof of Theorem~\ref{th:main} is given below in this section.

Recall the space $\vdp$, defined in \eqref{eq:vdp}, which is a natural domain of $\epd$, and the space $\xdp$, defined in \eqref{eq:xdp}, which is a natural domain of $\hpd$.
From Theorem~\ref{th:main} we immediately obtain the following trace and extension result in the nonquadratic setting.
\begin{corollary}\label{coro:extension}
Let ${\rm Ext }\, g=P_D[g]$, the Poisson extension, and ${\rm Tr}\, u=u|_{D^c}$, the restriction to $D^c$. Then ${\rm Ext}\colon \xdp\to \vdp$, ${\rm Tr}\colon \vdp \to \xdp$,
and
${\rm Tr}\,{\rm Ext}$ is the identity operator on $\xdp$.
\end{corollary}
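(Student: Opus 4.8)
The plan is to deduce Corollary~\ref{coro:extension} directly from Theorem~\ref{th:main}, essentially by unwinding the definitions of the Sobolev--Bregman spaces $\vdp$ and $\xdp$ and combining them with the Douglas identity \eqref{eq:ident-fp}. There is very little to do beyond bookkeeping, so I expect the proof to be short.

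First I would show that $\mathrm{Ext}=P_D$ maps $\xdp$ into $\vdp$. Take $g\in\xdp$, i.e.\ $\hpd[g]<\infty$. By Theorem~\ref{th:main}(i), $P_D[g]$ is well-defined and $\epd[P_D[g]]=\hpd[g]<\infty$, so by the very definition \eqref{eq:vdp} of $\vdp$ we get $P_D[g]\in\vdp$. Hence $\mathrm{Ext}\colon\xdp\to\vdp$.

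Next I would show that $\mathrm{Tr}\colon\vdp\to\xdp$. Take $u\in\vdp$, i.e.\ $\epd[u]<\infty$. By Theorem~\ref{th:main}(ii), $\hpd[u|_{D^c}]<\infty$, which by \eqref{eq:xdp} means exactly that $\mathrm{Tr}\,u=u|_{D^c}\in\xdp$. Finally, for the composition: given $g\in\xdp$, by the definition \eqref{eq:exg} of the Poisson extension we have $P_D[g](x)=g(x)$ for $x\in D^c$, hence $\mathrm{Tr}\,\mathrm{Ext}\,g=P_D[g]|_{D^c}=g$, i.e.\ $\mathrm{Tr}\,\mathrm{Ext}$ is the identity on $\xdp$.

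Since every step is a direct invocation of Theorem~\ref{th:main} or of a defining formula, there is essentially no obstacle. The only point requiring a word of care is that the statement $\mathrm{Ext}\colon\xdp\to\vdp$ implicitly uses that $P_D[g]$ is well-defined (as a pointwise-finite function on $D$) for every $g\in\xdp$; this is precisely the first assertion of Theorem~\ref{th:main}(i), so it is already available. I would write the argument as a short paragraph rather than a displayed computation.
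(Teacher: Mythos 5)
Your proof is correct and matches the paper's (implicit) reasoning exactly: the paper presents the corollary as an immediate consequence of Theorem~\ref{th:main} without a separate proof, and your three steps — applying part (i) for well-definedness of $P_D[g]$ and the identity $\epd[P_D[g]]=\hpd[g]$, part (ii) for the trace direction, and the defining formula \eqref{eq:exg} for $\mathrm{Tr}\,\mathrm{Ext}=\mathrm{id}$ — are precisely the intended unwinding.
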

It is well justified to call ${\rm Ext}$ the extension operator and ${\rm Tr}$ the trace operator for $\vdp$.

We next give the Douglas identity for the Poisson extension on $D$ and the form $\form{E}{\mR^d}{p}$ (with the integration over the whole of $\Rd\times\Rd$).
\begin{corollary}\label{cor:nnDia} If $P_D[|g|]<\infty$ on $D$, in particular if $\hpd[g] < \infty$, then
$$\form{E}{\mR^d}{p}[P_D[g]] = \tfrac 1p\iint\limits_{D^c\times D^c} F_p(g(z),g(w)) (\gamma_D(z,w)+\nu (z,w))\, \dz\dw.$$
\end{corollary}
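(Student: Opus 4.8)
The plan is to reduce the identity to the already-proven Douglas identity \eqref{eq:ident-fp} by decomposing the region of integration $\mR^d\times\mR^d$ into the set $Q=\mR^d\times\mR^d\setminus D^c\times D^c$ and its complement $D^c\times D^c$. Write $u=P_D[g]$; on $D^c$ we have $u=g$ by \eqref{eq:exg}. First I would observe that, since $P_D[|g|]<\infty$ on $D$ (which, by \eqref{eq:apHp2}, \eqref{e.cHk} and the finiteness of $\hpd[g]$, together with \eqref{eq:always-jump}, follows from $\hpd[g]<\infty$ after noting that $|g|\le 1+|g|^{\langle p/2\rangle\,\langle 2/p\rangle}$-type bounds or, more directly, by Jensen applied to the harmonic measure), the Poisson integral defining $u$ on $D$ is absolutely convergent, so $u$ is a well-defined $L$-harmonic function on $D$ and all integrals below make sense. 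Then by definition \eqref{eq:defEpD} applied with $D$ replaced by $\mR^d$, i.e.\ \eqref{eq:fsp} read through \eqref{eq:Ep},
\[
\form{E}{\mR^d}{p}[u]=\tfrac1p\iint\limits_{\mR^d\times\mR^d}F_p(u(x),u(y))\nu(x,y)\,\dx\dy
=\epd[u]+\tfrac1p\iint\limits_{D^c\times D^c}F_p(g(x),g(y))\nu(x,y)\,\dx\dy,
\]
simply because $Q\cup(D^c\times D^c)=\mR^d\times\mR^d$ up to the null set $\{x=y\}$, the two pieces are disjoint, $F_p\ge0$, and $u|_{D^c}=g$.

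Next I would invoke Theorem~\ref{th:main}(i): under the standing hypotheses ({\bf(A1)}, {\bf(A2)}, (VDC), $|\partial D|=0$) and $\hpd[g]<\infty$, we have $\epd[u]=\hpd[g]$, and by \eqref{eq:sHp},
\[
\epd[u]=\hpd[g]=\tfrac1p\iint\limits_{D^c\times D^c}F_p(g(w),g(z))\gamma_D(w,z)\,\dw\dz.
\]
Substituting this into the previous display and combining the two double integrals over $D^c\times D^c$ (legitimate since both integrands are nonnegative) yields
\[
\form{E}{\mR^d}{p}[u]=\tfrac1p\iint\limits_{D^c\times D^c}F_p(g(z),g(w))\bigl(\gamma_D(z,w)+\nu(z,w)\bigr)\,\dz\dw,
\]
which is the claimed formula, using $F_p(g(w),g(z))$ and $F_p(g(z),g(w))$ interchangeably under the symmetric kernel via \eqref{eq:wH} (or just keeping the same orientation throughout).

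The only genuine point needing care — and the step I expect to be the main obstacle — is justifying that the hypothesis $P_D[|g|]<\infty$ alone (the weaker assumption) suffices for $P_D[g]$ to be well-defined and $L$-harmonic on $D$, and that under the stronger hypothesis $\hpd[g]<\infty$ this weaker condition is automatic; the latter should follow from \eqref{eq:apHp2}, the pointwise comparison \eqref{e.cHk}, \eqref{eq:always-jump}, and a Jensen-type estimate bounding $P_D[|g|](x)$ in terms of $\form{H}{D}{}[g^{\langle p/2\rangle}]$ and the finite measure $\omega_D^x$, much as in the quadratic case of \cite{MR4088505}. Everything else is the bookkeeping of splitting $\mR^d\times\mR^d$ into $Q$ and $D^c\times D^c$ and applying Theorem~\ref{th:main}; no new analytic input beyond Theorem~\ref{th:main} is required, and the nonnegativity of $F_p$ makes all the Tonelli-type manipulations automatic.
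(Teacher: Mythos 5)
Your decomposition of $\mR^d\times\mR^d$ into $Q=\mR^d\times\mR^d\setminus D^c\times D^c$ and $D^c\times D^c$, followed by the Douglas identity on $Q$ and the equality $u=g$ on $D^c$, is exactly the intended argument — the paper gives no separate proof of the corollary, treating it as an immediate consequence of Theorem~\ref{th:main}. The bookkeeping is right: by the symmetry of $\nu$ and $\gamma_D$, the integrals of $F_p$ and of its symmetrization $H_p$ over $D^c\times D^c$ against these kernels agree, so the two $D^c\times D^c$ pieces combine to give the kernel $\gamma_D+\nu$.

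One remark on the point you flag as the ``main obstacle,'' which I think you have slightly misdiagnosed. The implication $\hpd[g]<\infty\Rightarrow P_D[|g|]<\infty$ is precisely Lemma~\ref{lem:poisint}; there is nothing new to prove there. The genuine subtlety, which you brush against but do not fully close, lies in the other direction: the corollary's standing hypothesis is the \emph{weaker} one $P_D[|g|]<\infty$, while Theorem~\ref{th:main}(i) as stated assumes $\hpd[g]<\infty$; if $P_D[|g|]<\infty$ but $\hpd[g]=\infty$ you cannot simply cite the theorem. What saves the corollary is that the proof of Theorem~\ref{th:main}(i) uses the finiteness of $\hpd[g]$ only through Lemma~\ref{lem:poisint} to secure $P_D[|g|]<\infty$: once $u=P_D[g]$ is absolutely convergent and regular $L$-harmonic — which $P_D[|g|]<\infty$ together with \eqref{eq:always-jump} give directly — the chain of identities via Lemma~\ref{lem:aaa}, Proposition~\ref{prop:hardy-stein}, and Fubini--Tonelli (all integrands nonnegative) goes through with both sides permitted to equal $+\infty$, yielding $\epd[u]=\hpd[g]$ unconditionally. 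With that observation spelled out, your argument is complete.
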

We note that the kernel on the right-hand side of the above identity also appears in \cite[Theorem 5.6.3]{MR2849840} for $p=2$, but even the form $\form{E}{D}{2}$ and the Douglas identity of Theorem~\ref{th:main} with $p=2$ on full domain $\spc{V}{D}{2}$ do not appear in \cite{MR2849840}.

The proof of Theorem \ref{th:main} uses the following lemma,
 which asserts that the condition
 $\hpd[g]<\infty$ implies the finiteness of $P_D[|g|^p]$ and $P_D[|g|]$ on $D$.
\begin{lemma}\label{lem:poisint}
Suppose that $g\colon D^c\to \R$ satisfies $\hpd[g]<\infty.$
Then for every $x\in D$ we have
 $\int_{D^c}|g(z)|^p P_D(x,z)\,\dz<\infty$. In particular, the Poisson integral of $g$ is well-defined.
\end{lemma}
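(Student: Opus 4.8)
\textbf{Proof strategy for Lemma~\ref{lem:poisint}.}
The plan is to fix a reference point $x_0 \in D^c$ (away from $\partial D$, so that $P_D(x,\cdot)$ behaves nicely near $x_0$) and to compare the quantity $\int_{D^c} |g(z)|^p P_D(x,z)\,\dz$ with $\hpd[g]$ plus a finite correction term. The natural decomposition is
\[
|g(z)|^p \lesssim |g(z) - g(x_0)|^p + |g(x_0)|^p,
\]
so it suffices to bound $\int_{D^c} |g(z)-g(x_0)|^p P_D(x,z)\,\dz$ and to check that $\int_{D^c} P_D(x,z)\,\dz < \infty$ (which holds, indeed equals $1$, by \eqref{eq:always-jump} under the standing hypotheses on $D$ and $\nu$). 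For the first term I would invoke Lemma~\ref{lem:p-incr}: for $p \ge 2$ we have $|g(z)-g(x_0)|^p \lesssim F_p(g(z),g(x_0))$, while for $1 < p \le 2$ the reverse holds, so the two ranges of $p$ must be handled slightly differently, but in both cases the target reduces to controlling an integral of the form $\int_{D^c} F_p(g(z),g(x_0)) P_D(x,z)\,\dz$ (with an extra elementary argument when $1<p<2$, splitting $D^c$ into the region where $|g(z)-g(x_0)|$ is bounded and the region where it is large).

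The key step is then to relate $\int_{D^c} F_p(g(z),g(x_0)) P_D(x,z)\,\dz$ to $\hpd[g] = \tfrac1p\iint_{D^c\times D^c} F_p(g(w),g(z))\gamma_D(w,z)\,\dw\dz$. Here I would exploit that, by the definition \eqref{eq:rg2} of the interaction kernel, $\gamma_D(x_0,z) = \int_D \nu(x_0,x)P_D(x,z)\,\dx$, so integrating the identity in the $w$-variable against $\nu(x_0,w)$ over a small ball $B$ around $x_0$ inside $D^c$ produces exactly the interaction kernel. More precisely, pick a ball $B = B(x_0,r)$ with $\overline{B}\subset \operatorname{int}(D^c)$; then for $z$ bounded away from $B$ one has $\gamma_D(w,z)\approx \gamma_D(x_0,z)$ for $w\in B$ (using \eqref{eq:aaaa}-type comparisons and the Harnack-type estimate \eqref{eq:pjP} for $P_D$), hence
\[
\int_B\!\!\int_{D^c} F_p(g(w),g(z))\gamma_D(w,z)\,\dw\dz \;\gtrsim\; \int_{D^c}\!\Big(\int_B F_p(g(w),g(z))\,\dw\Big)\gamma_D(x_0,z)\,\dz,
\]
and a further application of Lemma~\ref{lem:aaa}(ii) (the variance-type inequality $\mathbb{E}F_p(a,X)\ge \mathbb{E}F_p(\mathbb{E}X,X)$, or directly $\int_B F_p(g(w),g(z))\,\dw \gtrsim F_p(\bar g_B, g(z))$ where $\bar g_B$ is an average) lets me replace the inner average by $F_p$ evaluated at a fixed value. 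Comparing $\gamma_D(x_0,z)$ with $P_D(x,z)$ for the fixed interior point $x$ — again via \eqref{eq:pjP} and \eqref{eq:rg2}, both being comparable to $\nu(x_0,z)$ up to constants depending on $D$, $x$, $x_0$, at least for $z$ away from $\partial D$ — then closes the loop. The contribution of $z$ near $\partial D$ (a neighborhood of $\partial D$ in $D^c$, which has positive measure) is handled separately: there both $P_D(x,\cdot)$ and $\gamma_D(x_0,\cdot)$ are bounded below by positive constants on compact pieces, so finiteness of $\hpd[g]$ forces local integrability of $F_p(g(\cdot),g(x_0))$, hence of $|g|^p P_D(x,\cdot)$, there as well.

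The main obstacle I anticipate is the bookkeeping of where the comparability constants are uniform and where they degenerate: the estimates $P_D(x,z)\approx \nu(x,z)\mathbb{E}^x\tau_D$ and $\gamma_D(x_0,z)\approx \nu(x_0,z)\mathbb{E}^{x_0-\text{type}}\dots$ in \eqref{eq:pjP}, \eqref{eq:aaaa} require $z$ to stay at distance $\ge \rho$ from $\partial D$, so the partition of $D^c$ into a "far" region (where multiplicative comparisons between $P_D(x,\cdot)$, $\gamma_D(x_0,\cdot)$ and $\nu(x_0,\cdot)$ are legitimate) and a "near $\partial D$" region (handled by boundedness of the kernels on compacta plus VDC to ensure enough mass) must be set up carefully, and one must check the near-boundary piece does not secretly require more than $\hpd[g]<\infty$. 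Once $\int_{D^c}|g(z)|^p P_D(x,z)\,\dz<\infty$ is established for every $x\in D$, the finiteness of $\int_{D^c}|g(z)|P_D(x,z)\,\dz$ and hence well-definedness of $P_D[g]$ follows immediately from H\"older's inequality together with \eqref{eq:always-jump}.
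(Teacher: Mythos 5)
Your strategy diverges substantially from the paper's and, as you yourself flag, the near-boundary contribution is the unresolved crux --- and that gap is essential, not cosmetic. You need $\gamma_D(x_0,z)\gtrsim P_D(x,z)$ uniformly over $z\in D^c$, but the comparisons \eqref{eq:pjP}, \eqref{eq:aaaa} only hold for $z$ at distance $\geq\rho$ from $D$, and they also presuppose $D$ bounded, an assumption neither the lemma nor Theorem~\ref{th:main} makes. Near $\partial D$ the kernel $P_D(x,\cdot)$ is typically \emph{unbounded} (for $\Delta^{\alpha/2}$ on a smooth domain it blows up like $\operatorname{dist}(z,D)^{-\alpha/2}$), so ``bounded below on compacta'' does not control what you need; controlling the ratio $P_D(x,z)/\gamma_D(x_0,z)$ up to $\partial D$ would require boundary estimates well beyond the paper's toolkit, and the sentence ``so finiteness of $\hpd[g]$ forces local integrability of $|g|^pP_D(x,\cdot)$ there'' is essentially the conclusion of the lemma rather than a deduction from what precedes it. A lesser issue: Lemma~\ref{lem:aaa}(ii) concerns the \emph{second} slot of $F_p$ (namely $\mE F_p(a,X)\ge F_p(a,\mE X)$), so to average the \emph{first} argument over $B$ you must first pass through the symmetrization $F_p\approx H_p$, a step your sketch omits.

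The paper's proof avoids kernel comparisons entirely. Using \eqref{eq:rg2} and Fubini--Tonelli, $\hpd[g]<\infty$ is rewritten as
$\int_D\int_{D^c}\int_{D^c}F_p(g(w),g(z))\nu(w,x)P_D(x,z)\,\dz\dw\dx<\infty$,
and since $\nu>0$ this yields, for almost every (hence for some) pair $(w,x)\in D^c\times D$, that $\int_{D^c}F_p(g(w),g(z))P_D(x,z)\,\dz<\infty$. From that single good pair, finiteness of $I:=\int_{D^c}|g(z)|^pP_D(x,z)\,\dz$ follows by algebra alone: for $p\ge2$ one truncates $g_n=(-n)\vee g\wedge n$ and derives the self-improving bound $I_n\le A+2g(w)^2 I_n^{1-2/p}$, forcing $(I_n)$ bounded; for $1<p<2$ one splits $D^c$ according to whether $|g(z)|\le|g(w)|$. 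Finiteness at one $x$ is then propagated to all of $D$ by the Harnack inequality. No reference ball $B$, no comparison of $\gamma_D$ to $P_D$, and no boundedness of $D$ are needed. You should abandon the kernel-comparison route and instead exploit the almost-everywhere consequence of Fubini as the paper does.
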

\begin{proof}
Denote $I=\int_{D^c}|g(z)|^p P_D(x,z)\,\dz$.
If $\hpd[g]<\infty$, then
\begin{eqnarray}\label{eq:form-fd-expanded}
&&\iint\limits_{D^c\times D^c}F_p(g(w),g(z))\gamma_D(w,z)\,\dw\dz\nonumber
\\ &=&\int_D\int_{D^c}\int_{D^c}F_p(g(w),g(z))
\nu(w,x)P_D(x,z)\,\dz\dw\dx<\infty.
\end{eqnarray}
Since $\nu > 0$, for
almost all (hence for some) pairs $(w,x)\in D^c\times D$ we get
\begin{equation}\label{eq:Ffin}
\int_{D^c} F_p(g(w),g(z))P_D(x,z)\,\dz
 <\infty.
\end{equation}
For the remainder of the proof, we only consider pairs $(w,x)$ satisfying the above condition.

 We will use different approaches for $p\geq 2$ and $p\in (1,2).$
Let $p\geq 2$. From \eqref{elem-ineq-2} we obtain
\[A:=\int_{D^c} (g(z)-g(w))^2|g(z)|^{p-2}P_{D}(x,z)\,\dz<\infty.\]

For $z\in D^c$, let $g_n(z)= -n\vee g(z) \wedge n$. Clearly $|g_n(z)|\leq |g(z)|$ and
 $|g_n(z)|\nearrow |g(z)|$ when $n\to \infty$. Since $|g_n(z)|\leq n, $
the integral $I_n:=\int_{D^c}|g_n(z)|^p P_D(x,z)\,\dz$ is finite. It is
also true that the increments of $g_n$ do not exceed those of $g$, that is
$|g_n(z)-g_n(w)|\leq |g(z)-g(w)|$. Consequently,
\begin{eqnarray*}
I_n
&= &
 \int_{D^c}g_n(z)^2 |g_n(z)|^{p-2} P_D(x,z)\,\dz\nonumber
 \\
 &\leq& 2\int_{D^c} (g_n(z)-g_n(w))^2|g_n(z)|^{p-2}P_D(x,z)
+ 2g_n(w)^2 \int_{D^c} |g_n(z)|^{p-2}P_D(x,z)\,\dz\\
&\leq & A
+ 2g(w)^2 \left(\int_{D^c} |g_n(z)|^{p}P_D(x,z)\,\dz\right)^{\frac{p-2}{p}}.\nonumber
\end{eqnarray*}
The last inequality is obvious for $p=2$, and follows from Jensen's inequality if $p>2$.
Thus,
\begin{equation}\label{eq:In-bound}
I_n\leq A + 2g(w)^2 (I_n)^{1-\frac{2}{p}},
\end{equation}
hence the sequence $(I_n)$ is bounded. By the Monotone Convergence Theorem,
$I
<\infty$.
By Jensen's inequality we also get $\int_{D^c}|g(z)| P_D(x,z)\,\dz<\infty$. By the Harnack inequality, the finiteness of the Poisson integral of $|g|$ or $|g|^p$ at any point $x\in D$ guarantees its finiteness at every point of $D$, see, e.g., \cite[Lemma 4.6]{MR4088505}, therefore the proof is finished for $p\geq 2$.

Now let $p\in (1,2)$. If $g\equiv 0$ a.e. on $D^c$, then the statement is trivial. Otherwise, pick $w\in D^c$ such that $0<|g(w)|<\infty$.
Let $
B=\{z\in D^c: |g(z)| > |g(w)|\}$.
We have
	$$\int_{D^c\setminus B} |g(z)|^{p} P_D(x,z)\, \dz \leq |g(w)|^p<\infty.$$
Using \eqref{elem-ineq} and \eqref{eq:Ffin} we get
\begin{align*}
	&\int_{B}|g(z)|^p P_D(x,z)\,\dz =
	\int_{B}g(z)^2 |g(z)|^{p-2} P_D(x,z)\,\dz
	\\
	&\leq 2\int_{B} (g(z)-g(w))^2|g(z)|^{p-2}P_D(x,z)\,\dz
	 + 2g(w)^2 \int_{B} |g(z)|^{p-2}P_D(x,z)\,\dz\\
	 &\approx
	 \int_B F_p(g(w),g(z))P_D(x,z)\, \dz+2|g(w)|^p<\infty.
\end{align*}
Thus, $P_D[|g|^p](x)<\infty$. The rest of the proof is the same as in the case $p\ge 2$.
\end{proof}

\begin{proof}[Proof of Theorem \ref{th:main}]
To prove (i) we let $\hpd[g]<\infty$ and we have \eqref{eq:form-fd-expanded}. Let $u = P_D[g]$. By \eqref{eq:exg}, $u=g$ on $D^c$. By Lemma \ref{lem:poisint}, $u$ is well-defined, and it is regular $L$-harmonic in $D$,
that is $\mathbb E^x[u(X_{\tau_D})]=u(x)$ for $x\in D$, cf. Definition~\ref{def:rh} and \eqref{eq:always-jump}. In particular, we have $\mE^x|u(X_{\tau_D})|<\infty$.

For $x\in D$ consider the integral
$\int_{D^c}F_p(u(w),u(z)) P_D(x,z)\,\dz$. By \eqref{eq:always-jump}, $P_D(x,z)$ is the density of the distribution of $X_{\tau_ D}$ under $\mathbb P^x$, hence $$
\int_{D^c}F_p(u(w),u(z)) P_D(x,z)\,\dz = \mathbb E^x[F_p(u(w), u(X_{\tau_D}))].
$$
By Lemma \ref{lem:aaa} (ii) applied to $a=u(w)$, $X=u(X_{\tau_D})$ and $\mathbb E=\mathbb E^x$, the above expression is equal to
\begin{equation}\label{eq:var}
F_p(u(w), \mathbb E^xu(X_{\tau_D}))+
\mathbb E^xF_p(u(x), u(X_{\tau_D}))= F_p(u(w),u(x))+\mathbb E^xF_p(u(x),u(X_{\tau_D})).
\end{equation}
By integrating the first term on the right-hand side of \eqref{eq:var} against $\nu(x,w)\,\dx\dw$ we obtain
\begin{equation}\label{eq:part-one}
\iint\limits_{D^c\times D} F_p(u(w),u(x))\nu(x,w)\,\dx\dw.
\end{equation}
For the second term in \eqref{eq:var} we use
Lemma \ref{lem:aaa} (i) and
Proposition \ref{prop:hardy-stein}:
\begin{eqnarray*}
\mathbb E^xF_p(u(x), u(X_{\tau_D}))
&=& \mathbb E^x|u(X_{\tau_D})|^p -|u(x)|^p
= \int_D G_D(x,y)\int_{\mathbb R^d}F_p(u(y),u(z))\nu(y,z)\,\dz\dy.
\end{eqnarray*}
We integrate the latter expression against $\nu(x,w)\,\dx\dw$. By Fubini--Tonelli, \eqref{eq:Pk} and \eqref{eq:always-jump},
\begin{align} \label{eq:part-two}
&\int_{D^c}\int_D \int_D \int_{\mathbb R^d}G_D(x,y)F_p(u(y),u(z))\nu(y,z)
\nu(x,w)\,\dz\dy\dx\dw\nonumber\\
= &\int_D\int_{\mathbb R^d} F_p(u(y),u(z))
\bigg(\int_{D^c}\bigg(\int_D G_D(x,y) \nu(x,w)\,\dx\bigg)\dw\bigg)\nu(y,z)\,\dz\dy
\nonumber\\
= &\int_D\int_{\mathbb R^d}F_p(u(y),u(z))
 \bigg(\int_{D^c} P_D(y,w)\,\dw\bigg)\nu(y,z)\,\dz\dy\nonumber\\
 = &\int_{D}\int_{\mathbb R^d} F_p(u(y),u(z))\nu(y,z)\,\dz\dy.
 \end{align}
Since the sum of \eqref{eq:part-one} and \eqref{eq:part-two} equals $p\epd[u]$, we obtain the Douglas identity.

We now prove (ii).
It is not obvious how to directly conclude that $\epd[u] < \infty$
implies
$P_D[|u|]<\infty$ on $D$,
thus we cannot apply
Lemma \ref{lem:aaa}. Instead we use another approach: by Lemma \ref{lem:quad}, $\epd[u] < \infty$ is equivalent to $\form{E}{D}{}[u^{\langle p/2\rangle}] < \infty$. By the trace theorem for $p=2$, see \cite[Theorem 2.3]{MR4088505}, $\form{H}{D}{} [u^{\langle p/2\rangle}] < \infty$. By Lemma \ref{lem:quad} we get (ii).
\end{proof}

\section{Douglas and Hardy--Stein identities with remainders}\label{sec:aug}
Throughout this section we assume that $D$ is bounded.
In the (quadratic) case $p=2$, under a mild additional assumption on $D, $ the Poisson integral $P_D[g]$ was shown to be the minimizer of the form $\form{E}{D}{}$ among all Borel functions with a fixed exterior condition $g\in\spc{X}{D}{}$ (see \cite[Proposition 5.4 and Theorem 5.5]{MR4088505}). This needs not be the case when $p\neq 2$, and in this section we give an example of $D$ and $g\in \xdp$ for which $P_D[g]$ does not minimize $\epd$ among functions in $\vdp$ equal to $g$ on $D^c$. However, $P_D[g]$ is always a \emph{quasiminimizer}, if we adopt the following definition:
\begin{definition} Let $K\geq 1.$
Function $u\in \vdp$ is a $K$-quasiminimizer of $\epd$, if $\form{E}{U}{p}[u] \leq K\form{E}{U}{p}[v]$ for every nonempty open set $U\ar{\subset} D$ \ar{satisfying {\rm (VDC)} and $|\partial U| = 0$}, and every $v\in \spc{V}{U}{p}$ equal to $u$ on $U^c$. We say that $u$ is a quasiminimizer if it is a $K$-quasiminimizer for some $K\in [1,\infty)$.
\end{definition}
The definition is inspired by the classical one given by Giaquinta and Giusti \cite[(5.26)]{MR666107}. To avoid technical complications and to make this digression short we require regular test sets $U$ above.
However, to be prudent
we note that the choice of admissible sets $U$ may affect the definition of quasiminimizers and should be carefully considered, cf. Giusti \cite[Example 6.5]{MR1962933}. In the classical PDEs, quasiminimizers display many regularity properties similar to minimizers, see, e.g., Adamowicz and Toivanen \cite{MR3373594}, DiBenedetto and Trudinger \cite{MR778976}, and Ziemer \cite{MR823124}. The main motivation for studying quasiminimizers is the fact that the solution of a complicated variational problem may be a quasiminimizer of a better understood functional see, e.g., \cite[Theorem 2.1]{MR666107}.

\begin{proposition}\label{prop:Q-min} Suppose that the assumptions of Theorem \ref{th:main} are satisfied, $D$ is bounded, and let $g\in \xdp.$
Then $P_D[g]$ is a $K$-quasiminimizer of $\epd$ with $K$ independent of $g$.
\end{proposition}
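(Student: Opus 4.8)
The plan is to exploit the Douglas identity of Theorem~\ref{th:main} twice --- once on $D$ and once on every Lipschitz test set $U\subset\subset D$ --- together with the comparison \eqref{eq:apEp2} that links $\epd$ to the quadratic form $\form{E}{D}{}[\,\cdot\,^{\langle p/2\rangle}]$. First I would fix $u=P_D[g]$ with $g\in\xdp$; by Theorem~\ref{th:main}(i) and Lemma~\ref{lem:poisint}, $u$ is well-defined, regular $L$-harmonic in $D$, and $\epd[u]=\hpd[g]<\infty$, so in particular $u\in\vdp$. Now take a nonempty open Lipschitz $U\subset\subset D$ and $v\in\spc{V}{U}{p}$ with $v=u$ on $U^c$. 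The goal is $\form{E}{U}{p}[u]\le K\form{E}{U}{p}[v]$ with $K$ independent of $U$ and $v$.

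The key observation is that $u$ restricted to a neighborhood of $U$ is $L$-harmonic in $U$, hence regular $L$-harmonic in $U$ (it is continuous up to $\partial U$ by $u\in C^2(D)$, and $U$ is Lipschitz so \eqref{eq:always-jump} applies to $U$), so $u=P_U[u|_{U^c}]$. By Theorem~\ref{th:main}(i) applied to $U$ in place of $D$, $\form{E}{U}{p}[u]=\form{H}{U}{p}[u|_{U^c}]$. On the other hand, Theorem~\ref{th:main}(ii) applied to $U$ and the competitor $v$ gives $\form{H}{U}{p}[v|_{U^c}]\le\ $ (a constant times) $\form{E}{U}{p}[v]$; but one must be careful because Theorem~\ref{th:main} is stated with the \emph{equality} $\epd[P_D[g]]=\hpd[g]$ and with the one-sided bound in part (ii) — so the cleanest route is to combine: $\form{E}{U}{p}[u]=\form{H}{U}{p}[u|_{U^c}]=\form{H}{U}{p}[v|_{U^c}]$ since $u=v$ on $U^c$, and then bound $\form{H}{U}{p}[v|_{U^c}]$ by $\form{E}{U}{p}[v]$. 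For the last bound, since $v=P_U[v|_{U^c}]$ need not hold, I would instead use \eqref{eq:apHp2} and \eqref{eq:apEp2}: $\form{H}{U}{p}[v|_{U^c}]\approx\form{H}{U}{}[(v|_{U^c})^{\langle p/2\rangle}]$, and the quadratic Douglas identity / the fact that $P_U[(v|_{U^c})^{\langle p/2\rangle}]$ is the $\form{E}{U}{}$-minimizer for exterior data $(v|_{U^c})^{\langle p/2\rangle}$ (from \cite[Theorem 2.3]{MR4088505}) gives $\form{H}{U}{}[(v|_{U^c})^{\langle p/2\rangle}]=\form{E}{U}{}[P_U[(v|_{U^c})^{\langle p/2\rangle}]]\le\form{E}{U}{}[v^{\langle p/2\rangle}]\approx\epd[v]=\form{E}{U}{p}[v]$. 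Chaining these comparisons yields $\form{E}{U}{p}[u]\le K\form{E}{U}{p}[v]$ with $K$ depending only on $p$ (through the constants in \eqref{e.cHk}), and in particular not on $U$, which also confirms that the quasiminimizing constant is uniform.

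The main obstacle I anticipate is the verification that $u=P_D[g]$ is genuinely \emph{regular} $L$-harmonic in each test set $U$ and that Theorem~\ref{th:main}(i) may legitimately be invoked with $U$ in place of $D$: the theorem's hypotheses require $U^c$ to satisfy (VDC) and $|\partial U|=0$, which is why restricting to Lipschitz $U\subset\subset D$ is convenient (Lipschitz domains have $|\partial U|=0$ and $U^c$ satisfies (VDC)). One also needs that $u|_{U^c}\in\spc{X}{U}{p}$, i.e. $\form{H}{U}{p}[u|_{U^c}]<\infty$; this follows from $\epd[u]<\infty$ together with domain monotonicity ($\form{E}{U}{p}[u]\le\epd[u]$, since the integration region for $U$ is contained in that for $D$) and Theorem~\ref{th:main}(ii) applied to $U$. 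A secondary subtlety is that $v\in\spc{V}{U}{p}$ must have finite $\form{H}{U}{p}[v|_{U^c}]=\form{H}{U}{p}[u|_{U^c}]<\infty$ for the argument to be non-vacuous, but if $\form{E}{U}{p}[v]=\infty$ the inequality is trivial, so without loss of generality $\form{E}{U}{p}[v]<\infty$ and Theorem~\ref{th:main}(ii) gives the needed finiteness. Assembling these points carefully, and tracking that all comparison constants are absolute, completes the proof.
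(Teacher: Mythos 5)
Your proposal is correct and follows essentially the same path as the paper's proof: reduce to the quadratic form via the comparisons \eqref{eq:apEp2} and \eqref{eq:apHp2}, invoke the $p=2$ minimization property of the Poisson extension on $U$ from \cite{MR4088505} (Proposition 5.4 and Theorem 5.5 there, rather than Theorem 2.3), and convert back with the Douglas identity on $U$ for exponents $2$ and $p$; the paper simply writes the chain of inequalities starting from $\form{E}{U}{p}[v]$ whereas you start from $\form{E}{U}{p}[u]$. One tiny slip: the intermediate expression $\form{E}{U}{}[v^{\langle p/2\rangle}]\approx\epd[v]$ should read $\form{E}{U}{}[v^{\langle p/2\rangle}]\approx\form{E}{U}{p}[v]$, but your surrounding text shows you intended the form on $U$, and the argument is unaffected.
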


\begin{proof}
Fix a subset $U\subset D$ \ar{satisfying {\rm (VDC)} and $|\partial U| = 0$}, and let $v\in \spc{V}{U}{p}$ be equal to $u:=P_D[g]$ on $U^c$. According to
\eqref{eq:apEp2} we have $v^{\langle p/2\rangle}\in\spc{V}{U}{}$
and
\[\form{E}{U}{p}[v] \approx \form{E}{U}{}[v^{\langle p/2\rangle}],\]
with constants independent of $U$ and $v$.
Note that $v^{\langle p/2\rangle}$ agrees with $u^{\langle p/2\rangle}$ on $U^c$. Since
$U^c$ satisfies {\rm (VDC)}, by \cite[Proposition 5.4 and Theorem 5.5]{MR4088505},
\begin{equation}\label{eq:qm}
\form{E}{U}{}[v^{\langle p/2\rangle}] \geq
\form{E}{U}{}[P_U[u^{\langle p/2\rangle}]].
\end{equation}
By applying the Douglas identity for the set $U$, first with exponent $2$, and then with exponent $p$, and by \eqref{eq:apHp2} we get that the right-hand side of \eqref{eq:qm} is equal to
\begin{align*}
 \form{H}{U}{}[u^{\langle p/2\rangle}] \approx \form{H}{U}{p}[u] = \form{E}{U}{p}[P_U[u]] = \form{E}{U}{p}[u].
\end{align*}
In the last equality we use the identity $P_U[u]= u$, see \eqref{eq:always-jump}.
The proof is complete.
\end{proof}

To prove that Poisson integrals need not be minimizers,
we first extend the Hardy--Stein and Douglas identities to
functions that are not harmonic. The results are new even for $p=2$ and $\Delta^{\alpha/2}$.

\smallskip

Recall that $D$ is bounded, hence
$\mE^x\tau_D$ is bounded.
In what follows by $\lim\limits_{U\uparrow D}$ we denote the limit over an arbitrary ascending sequence of Lipschitz open sets $U_n \subset\subset D$ such that
$\bigcup_{n} U_n = D$.
Here is an extended version of the Hardy--Stein formula.
\begin{proposition}\label{prop:hsaug} Let $p> 1$ and assume that $\nu$ satisfies {\bf (A1)} and {\bf (A2)}. Let $u\colon \Rd\to \R$. If $u\in C^2(D)$ and $u$ and $Lu$ are bounded in $D$, then for every $x\in D$,
	\begin{align}
	\lim\limits_{U\uparrow D}\mathbb E^x |u(X_{\tau_U})|^p&= |u(x)|^p+ \int_D G_D(x,y)\int_{\mathbb R^d} F_p(u(y),u(z))\nu(y,z)\,{\rm d}z{\rm d}y\label{eq:new0}\\
	&+ p \int_D G_D(x,y)u(y)^{\langle p-1\rangle}Lu(y)\,{\rm d}y.\label{eq:new}
	\end{align}
If in addition $D^c$ satisfies {\rm (VDC)} and $|\partial D| = 0$, then
$\lim\limits_{U\uparrow D}\mathbb E^x |u(X_{\tau_U})|^p=\mathbb E^x |u(X_{\tau_D})|^p$.
\end{proposition}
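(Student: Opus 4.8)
The plan is to adapt the proof of Proposition~\ref{prop:BDL-0}, carrying the extra term produced by the fact that now $Lu\not\equiv 0$ in $D$, and then to let $U\uparrow D$. If $\int_{D^c}|u(z)|^p\nu(x,z)\,\dz=\infty$, then, as in Proposition~\ref{prop:BDL-0} (whose blow-up argument uses only \eqref{e.cFH} and the boundedness of $u$ on $D$, not harmonicity), $\mE^x|u(X_{\tau_U})|^p=\infty$ for every Lipschitz $U\subset\subset D$ containing $x$, and likewise $\int_D G_D(x,y)\pord F_p(u(y),u(z))\nu(y,z)\,\dz\dy=\infty$; since the remainder \eqref{eq:new} is finite (being bounded by $p(\sup_D|u|)^{p-1}(\sup_D|Lu|)\,\mE^x\tau_D<\infty$, as $D$ is bounded), both sides of \eqref{eq:new0}--\eqref{eq:new} are $+\infty$. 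Assume henceforth $\int_{D^c}|u(z)|^p\nu(x,z)\,\dz<\infty$; since $u$ is bounded on $D$ and $U\subset\subset D$, this is equivalent (via \eqref{eq:pjP} for $U$) to $\mE^x|u(X_{\tau_U})|^p<\infty$ for Lipschitz $U\subset\subset D$, and then the integrability hypothesis of Lemma~\ref{lem:dynk} is met (the point being that $\nu$ is bounded below on bounded subsets). For $p\ge2$ the function $\phi=|u|^p$ is $C^2(D)$, and expanding the principal value defining $L\phi(y)$ by adding and subtracting $p\,u(y)^{\langle p-1\rangle}(u(z)-u(y))$ gives the absolutely convergent identity $L|u|^p(y)=\pord F_p(u(y),u(z))\nu(y,z)\,\dz+p\,u(y)^{\langle p-1\rangle}Lu(y)$, $y\in D$. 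For $1<p<2$ one replaces $\phi$ by the regularization $\phi_\varepsilon=(u^2+\varepsilon^2)^{p/2}\in C^2(D)$ (cf.\ the analogue of \eqref{eq:subtr} with $Lu$ kept), applies Lemma~\ref{lem:dynk}, and lets $\varepsilon\to0^+$ using $0\le F_p^{(\varepsilon)}\le\tfrac1{p-1}F_p$ from \eqref{eq:ub}, the bound \eqref{eq:bb}, and $u(y)(u(y)^2+\varepsilon^2)^{(p-2)/2}\to u(y)^{\langle p-1\rangle}$ (bounded on $D$ by $(\sup_D|u|)^{p-1}$), exactly as in Proposition~\ref{prop:BDL-0}. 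Either way, Lemma~\ref{lem:dynk} on $U$ yields, for $x\in U$,
\begin{align*}
\mE^x|u(X_{\tau_U})|^p=|u(x)|^p&+\int_U G_U(x,y)\pord F_p(u(y),u(z))\,\nu(y,z)\,\dz\dy\\
&+p\int_U G_U(x,y)\,u(y)^{\langle p-1\rangle}Lu(y)\,\dy.
\end{align*}

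I would then let $U\uparrow D$ along an arbitrary ascending sequence of Lipschitz open sets exhausting $D$. By domain monotonicity of $G_U$ and $F_p\ge0$, the middle term increases with $U$, so by monotone convergence it tends to $\int_D G_D(x,y)\pord F_p(u(y),u(z))\nu(y,z)\,\dz\dy$ (possibly $+\infty$). For the last term, $|u(y)^{\langle p-1\rangle}Lu(y)|\le C:=(\sup_D|u|)^{p-1}\sup_D|Lu|<\infty$ on $D$, $G_U(x,\cdot)\uparrow G_D(x,\cdot)$, and $\int_D G_D(x,y)\,\dy=\mE^x\tau_D<\infty$; so by dominated convergence it tends to the finite quantity $p\int_D G_D(x,y)u(y)^{\langle p-1\rangle}Lu(y)\,\dy$. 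Hence $\mE^x|u(X_{\tau_U})|^p$ converges to the right-hand side of \eqref{eq:new0}--\eqref{eq:new}, a limit independent of the chosen sequence, which proves \eqref{eq:new0}--\eqref{eq:new} for every $x\in D$.

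Finally, assume in addition that $D^c$ satisfies (VDC) and $|\partial D|=0$, so \eqref{eq:always-jump} holds for $D$. If $\mE^x|u(X_{\tau_D})|^p=\infty$, then restricting the middle term of \eqref{eq:new0} to $z\in D^c$ with $|u(z)|\ge3\sup_D|u|$, where $F_p(u(y),u(z))\gtrsim|u(z)|^p$ for all $y\in D$ by \eqref{e.cFH}, and using $\int_D G_D(x,y)\nu(y,z)\,\dy=P_D(x,z)$ together with \eqref{eq:always-jump}, one finds this term $\ge\int_{\{|u|\ge3\sup_D|u|\}\cap D^c}|u(z)|^pP_D(x,z)\,\dz=\infty$, so by the previous paragraph $\lim_{U\uparrow D}\mE^x|u(X_{\tau_U})|^p=\infty=\mE^x|u(X_{\tau_D})|^p$. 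Otherwise put $h:=P_D[\,|u|^p\,]$ in the sense of \eqref{eq:exg} (exterior datum $z\mapsto|u(z)|^p$); then $h=|u|^p$ on $D^c$, $h(y)=\mE^y|u(X_{\tau_D})|^p$ for $y\in D$ (by \eqref{eq:always-jump}), $h$ is finite on $D$ by the Harnack inequality and regular $L$-harmonic there, so $\{h(X_{\tau_U})\}_{U\subset\subset D}$ is a martingale closed by $h(X_{\tau_D})=|u(X_{\tau_D})|^p$, hence uniformly integrable, with $\mE^x h(X_{\tau_U})=h(x)=\mE^x|u(X_{\tau_D})|^p$ for all $U$. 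As in the proof of Proposition~\ref{prop:hardy-stein}, quasi-left continuity of the filtration gives $\tau_U\uparrow\tau_D$ and $X_{\tau_U}\to X_{\tau_D}$ a.s.; since $D$ is bounded, $\tau_D<\infty$ a.s., and since $|\partial D|=0$, \eqref{eq:always-jump} forces $X_{\tau_D}\notin\partial D$ a.s., hence $X$ leaves $D$ by a jump and $\mpr^x(X_{\tau_U}\in D)\to0$. Writing
\begin{align*}
\mE^x|u(X_{\tau_U})|^p&=\mE^x\big[h(X_{\tau_U});\,X_{\tau_U}\in D^c\big]+\mE^x\big[|u(X_{\tau_U})|^p;\,X_{\tau_U}\in D\big]\\
&=\mE^x|u(X_{\tau_D})|^p-\mE^x\big[h(X_{\tau_U});\,X_{\tau_U}\in D\big]+\mE^x\big[|u(X_{\tau_U})|^p;\,X_{\tau_U}\in D\big],
\end{align*}
the two correction terms tend to $0$ --- the first by uniform integrability of $\{h(X_{\tau_U})\}$ and $\mpr^x(X_{\tau_U}\in D)\to0$, the second because $|u|^p\le(\sup_D|u|)^p$ on $D$ --- so $\lim_{U\uparrow D}\mE^x|u(X_{\tau_U})|^p=\mE^x|u(X_{\tau_D})|^p$. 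The step I expect to be the main obstacle is exactly this last one: rigorously justifying that the exit from $U$ eventually takes place outside $D$, i.e.\ that $X$ does not ``creep'' to $\partial D$ along the exhaustion --- this is where (VDC) and $|\partial D|=0$ genuinely enter, through \eqref{eq:always-jump} and quasi-left continuity. A lesser but real technical point is the integrability bookkeeping needed to apply Lemma~\ref{lem:dynk} in the non-harmonic setting, inherited from the dichotomy in Proposition~\ref{prop:BDL-0}.
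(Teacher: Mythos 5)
Your argument is correct, and for the first part of the statement it coincides with the paper's proof: the same dichotomy, the same application of Lemma~\ref{lem:dynk} to $|u|^p$ (resp.\ $(u^2+\varepsilon^2)^{p/2}$ for $p\in(1,2)$) producing the extra term $p\,u^{\langle p-1\rangle}Lu$, and the same passage $U\uparrow D$ via monotone convergence for the $F_p$ term and dominated convergence for the remainder, using $\int_D G_D(x,y)\,\dy=\mE^x\tau_D<\infty$.

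The genuine divergence is the final step, identifying $\lim_{U\uparrow D}\mE^x|u(X_{\tau_U})|^p$ with $\mE^x|u(X_{\tau_D})|^p$. The paper does this directly, without any auxiliary function: it writes $\mE^x|u(X_{\tau_U})|^p = \mE^x(|u(X_{\tau_U})|^p;\,\tau_U\neq\tau_D) + \mE^x(|u(X_{\tau_D})|^p;\,\tau_U=\tau_D)$, observes that the first term is bounded by $(\sup_D|u|)^p\,\mpr^x(\tau_U\neq\tau_D)\to 0$, and passes to the limit in the second by monotone convergence, since the events $\{\tau_U=\tau_D\}$ increase to a set of full measure as $U\uparrow D$ (here (VDC), $|\partial D|=0$ and \eqref{eq:always-jump} enter). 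That treatment covers the cases $\mE^x|u(X_{\tau_D})|^p<\infty$ and $=\infty$ simultaneously. You instead split off the infinite case by a blow-up argument, and in the finite case introduce the regular harmonic function $h=P_D[|u|^p]$, use the closed martingale $\{h(X_{\tau_U})\}$, and appeal to its uniform integrability to kill the correction terms. Both routes are valid; yours gives a slightly more structural picture (it works from the harmonic majorant $h$ of $|u|^p$), but it requires establishing finiteness and harmonicity of $P_D[|u|^p]$ and a separate case distinction, whereas the paper's argument needs only $\sup_D|u|<\infty$ and the convergence $\mpr^x(\tau_U\neq\tau_D)\to 0$ (for which it cites \cite{MR1438304} and \cite{MR4088505}), and is therefore leaner.
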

\begin{proof}Let $x\in D$.
Since $u$, $Lu$, and $\mE^x \tau_D$ are bounded on $D$, by \eqref{eq:gdtd} we get that the integral in \eqref{eq:new} is finite. Therefore, using the arguments from the proof of Proposition \ref{prop:hardy-stein}, in what follows we may and do assume that $\int_{\mR^d} |u(x)|^p (1\wedge \nu(x))\, \dx < \infty$, because otherwise both sides of \eqref{eq:new0} are infinite.
With this in mind we first consider open Lipschitz $U\subset\subset D$ so large that $x\in U.$
	
Let $p\geq 2$.	Since $u\in C^2(D),$
 we get that $L|u|^p(x)$ and $\mathbb E^x|u(X_{\tau_{U}})|^p$ are finite for $x\in U$,
 and \eqref{eq:h-s} holds. Furthermore, since $Lu$ is finite in $D$, the following manipulations are justified for $y\in D$:
	\begin{align}
 L|u|^p(y) =&\, L|u|^p(y)-pu(y)^{\langle p-1\rangle}Lu(y)
	+pu(y)^{\langle p-1\rangle}Lu(y)\label{eq:pluszero}\\
		=& \lim\limits_{\epsilon \to 0^+}\int_{|z-y|>\epsilon} \big(|u(z)|^p - |u(y)|^p -pu(y)^{\langle p-1\rangle}(u(z)-u(y))\big)\nu(z,y)\,\dz\nonumber\\
	&+
	pu(y)^{\langle p-1\rangle}Lu(y)\nonumber \\
=&\int_{\mR^d}F_p(u(y),u(z))\nu(y,z)\, \dz + pu(y)^{\langle p-1\rangle}Lu(y).\nonumber
	\end{align}
Consequently, \eqref{eq:h-s} takes on the form
	\begin{align}
	\label{eq:hsU} \mE^x |u(X_{\tau_U})|^p = |u(x)|^p &+ \int_U G_U(x,y)\int_{\mR^d} F_p(u(y),u(z))\nu(y,z)\, \dz\dy \\
	&+ \int_U G_U(x,y) u(y)^{\langle p-1\rangle}Lu(y)\, \dy.\label{eq:hsU2}
	\end{align}
For clarity we note that the left-hand side of \eqref{eq:hsU} is finite and the integral in \eqref{eq:hsU2} is absolutely convergent, so the integral in \eqref{eq:hsU} is finite as well.

	For $p\in (1,2)$ we proceed as in the proof of Proposition \ref{prop:BDL-0}, that is, instead of $|u(x)|^p$ we consider $\varepsilon > 0$ and the function $x\mapsto (u(x)^2 + \varepsilon^2)^{p/2}$.
We obtain (cf. \eqref{eq:subtr} and \eqref{eq:pluszero}),
	\begin{align}
	\mE^x(u(X_{\tau_U})^2 + \varepsilon^2)^{p/2} = (u(x)^2 + \varepsilon^2)^{p/2} &+ \int_U G_U(x,y)\int_\Rd
	 F_p^{(\varepsilon)}(u(y),u(z))\nu(y,z)\,\dz\dy\label{eq:hsUeps}\\
	&+ p\int_U G_U(x,y)u(y)(u(y)^2 + \varepsilon^2)^{(p-2)/2}Lu(y)\, \dy.\label{eq:hsU2eps}
	\end{align} As in the proof of Proposition \ref{prop:hardy-stein},
the left-hand side tends to $\mE^x |u(X_{\tau_U})|^p$ as $\varepsilon \to 0^+$. Furthermore, since $Lu$ and $u$ are bounded in $D$, the integral in \eqref{eq:hsU2eps} converges to that in \eqref{eq:hsU2}. Then we apply Fatou's lemma and the Dominated Convergence Theorem to the integral on the right-hand side of \eqref{eq:hsUeps} and we obtain \eqref{eq:hsU} for $p\in(1,2)$, too.
	
We let $U\uparrow D$ in \eqref{eq:hsU}. By the boundedness of $u$ and $Lu$ in $D$, the integral in \eqref{eq:hsU2} tends to the one in \eqref{eq:new}, which is absolutely convergent. The integral on the right-hand side of \eqref{eq:hsU} converges to the one on the right-hand side of \eqref{eq:new0} by the domain monotonicity and the Monotone Convergence Theorem.
Since the limit on the right-hand side of \eqref{eq:new0} exists, the limit on the left-hand side must exist as well. This proves \eqref{eq:new0}.

If $D^c$ satisfies {\rm (VDC)} and $|\partial D| = 0$, then \eqref{eq:always-jump} holds true. Furthermore, we have
	\begin{align*}
	\mE^x|u(X_{\tau_U})|^p = \mE^x (|u(X_{\tau_U})|^p;\,\tau_U \neq \tau_D) + \mE^x(|u(X_{\tau_D})|^p;\, \tau_U = \tau_D).
	\end{align*}
	The first term on the right converges to $0$ by the boundedness of $u$ on $D$ and the fact that $\mathbb P^x(\tau_U\neq~\tau_D)$ decreases to $0$ as $U\uparrow D$ (see the remark preceding \eqref{eq:always-jump}; see also the proof of Lemma 17 in Bogdan \cite{MR1438304} and the proof of Lemma A.1 in \cite{MR4088505}).
	The second term converges to $\mE^x |u(X_{\tau_D})|^p$ by the Monotone Convergence Theorem.
Thus the left-hand side of \eqref{eq:hsU} tends to $\mE^x |u(X_{\tau_D})|^p$.
\end{proof}

We next provide a Douglas-type identity for a class of nonharmonic functions:
\begin{theorem}\label{th:Douglas2}
Suppose that the assumptions of Theorem \ref{th:main} hold with the addition that $D$ is bounded.
Let $u\colon \Rd\to \R$ be bounded, $u\in C^2(D)$, and $Lu$ be bounded in $D$. Then
	\begin{equation}\label{eq:Dnh}
	\epd[P_D[u]] = \epd[u] + A_D(u),
	\end{equation}
	where
	\[A_D(u)=\int_D u(x)^{\langle p-1\rangle}Lu(x)\,{\rm d}x
	+\int_D\int_{D^c} u(w)^{\langle p-1\rangle}(u(x)-P_D[u](x))\,\nu(w,x)\,{\rm d}w{\rm d}x.\]
\end{theorem}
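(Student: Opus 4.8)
The plan is to closely follow the proof of Theorem~\ref{th:main}(i), but with the Hardy--Stein identity \emph{with remainder}, Proposition~\ref{prop:hsaug}, in the role of Proposition~\ref{prop:hardy-stein}; the two remainder terms of Proposition~\ref{prop:hsaug} will, after integration, assemble into $A_D(u)$. Write $v:=P_D[u]$. Since $u$ is bounded and the standing assumptions guarantee \eqref{eq:always-jump}, $v$ is bounded, $v\in C^2(D)$, $v$ is regular $L$-harmonic in $D$, $v=u$ on $D^c$, $v(X_{\tau_D})=u(X_{\tau_D})$ $\mpr^x$-a.s., and $\mE^x u(X_{\tau_D})=v(x)$ for $x\in D$. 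Two preliminary observations will be needed. First, applying Lemma~\ref{lem:dynk} to $u-v$ on Lipschitz $U\subset\subset D$ (where $L(u-v)=Lu$, since $v$ is harmonic in $D$) and letting $U\uparrow D$ gives the representation $u(x)-v(x)=-\int_D G_D(x,y)Lu(y)\,\dy$, $x\in D$, whence $|u(x)-v(x)|\le (\sup_D|Lu|)\,\mE^x\tau_D$. Second, with $\kappa_D(x):=\int_{D^c}\nu(x,w)\,\dw$, one has $\int_D G_D(x,y)\kappa_D(x)\,\dx=\int_{D^c}P_D(y,w)\,\dw=1$ for $y\in D$ (by \eqref{eq:Pk}, symmetry of $G_D$ and \eqref{eq:always-jump}), hence by Fubini--Tonelli $\int_D\kappa_D(x)\,\mE^x\tau_D\,\dx=|D|<\infty$. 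Combining the two with the boundedness of $u$ shows $A_D(u)$ is a finite real, so \eqref{eq:Dnh} is to be understood as an equality in $(-\infty,\infty]$.

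For the core step I would fix $x\in D$, $w\in D^c$ and apply, in order, Lemma~\ref{lem:aaa}(ii) with $a=u(w)$ and $X=u(X_{\tau_D})$ under $\mpr^x$ (note $\mE^x X=v(x)$), then Lemma~\ref{lem:aaa}(i), then Proposition~\ref{prop:hsaug} (whose hypotheses hold for $u$; its final assertion is available because $D^c$ satisfies (VDC) and $|\partial D|=0$). This produces
\begin{align*}
\int_{D^c}F_p(u(w),u(z))P_D(x,z)\,\dz&=F_p(u(w),v(x))+|u(x)|^p-|v(x)|^p\\
&\quad+\int_D G_D(x,y)\Big(\pord F_p(u(y),u(z))\nu(y,z)\,\dz+p\,u(y)^{\langle p-1\rangle}Lu(y)\Big)\dy.
\end{align*}
I would then multiply by $\nu(w,x)$ and integrate $\dw\,\dx$ over $D^c\times D$. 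On the left, $\int_D\nu(w,x)P_D(x,z)\,\dx=\gamma_D(w,z)$, so the left-hand side becomes $p\,\hpd[u]$, which equals $p\,\epd[v]$ by Theorem~\ref{th:main}(i) (its proof applies, in $[0,\infty]$, to any bounded datum $u|_{D^c}$). On the right: the term $F_p(u(w),v(x))$, using $v=u$ on $D^c$, integrates to the $D^c\times D$ part of $p\,\epd[v]$; the term $|u(x)|^p-|v(x)|^p$ integrates to the finite number $\int_D(|u(x)|^p-|v(x)|^p)\kappa_D(x)\,\dx$; and the Green-function term, by Fubini--Tonelli and $\int_D G_D(x,y)\kappa_D(x)\,\dx=1$, collapses to the $D\times\Rd$ part of $p\,\epd[u]$ plus $p\int_D u(y)^{\langle p-1\rangle}Lu(y)\,\dy$. (The nonnegative terms are handled by Tonelli; the remaining ones are absolutely integrable by the estimates of the first paragraph.)

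To finish, split both $p\,\epd[v]$ and $p\,\epd[u]$ into their $D\times\Rd$ and $D^c\times D$ parts, cancel the common $D^c\times D$ part of $p\,\epd[v]$ in the identity just obtained, and subtract. Using $v=u$ on $D^c$ and the elementary identity
\[F_p(u(w),v(x))-F_p(u(w),u(x))=|v(x)|^p-|u(x)|^p-p\,u(w)^{\langle p-1\rangle}(v(x)-u(x)),\]
the difference of the $D^c\times D$ parts of $p\,\epd[v]$ and $p\,\epd[u]$ equals $-\int_D(|u(x)|^p-|v(x)|^p)\kappa_D(x)\,\dx+p\int_D\int_{D^c}u(w)^{\langle p-1\rangle}(u(x)-v(x))\nu(w,x)\,\dw\,\dx$. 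The two $\int_D(|u|^p-|v|^p)\kappa_D$ contributions then cancel, leaving
\[p\,\epd[v]-p\,\epd[u]=p\int_D u(x)^{\langle p-1\rangle}Lu(x)\,\dx+p\int_D\int_{D^c}u(w)^{\langle p-1\rangle}(u(x)-v(x))\nu(w,x)\,\dw\,\dx=p\,A_D(u),\]
i.e. \eqref{eq:Dnh}. If one of $\epd[u],\epd[v]$ is infinite, then — since $A_D(u)$ and the $D^c\times D$-difference above are finite — so is the other, and \eqref{eq:Dnh} holds trivially.

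I expect the only real difficulty to be this boundary bookkeeping: $\kappa_D$, the $D^c\times D$ pieces of $\epd[u]$ and $\epd[v]$, and $\iint_{D^c\times D^c}\gamma_D$ may each be infinite for $D$ merely bounded, so the argument must carefully isolate the finite combinations — which is exactly what the bound $|u-v|\le(\sup_D|Lu|)\,\mE^{\cdot}\tau_D$, the mass identity $\int_D G_D(\cdot,y)\kappa_D\,\dx=1$, and the cancellation of the $\int_D(|u|^p-|v|^p)\kappa_D$ terms accomplish. Everything else is a reuse of Lemma~\ref{lem:aaa}, Proposition~\ref{prop:hsaug} and Theorem~\ref{th:main}(i), together with the bilinear manipulations already carried out in the proof of Theorem~\ref{th:main}.
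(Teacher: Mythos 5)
Your proposal is correct and follows essentially the same route as the paper's proof: the pointwise identity is obtained from Lemma~\ref{lem:aaa} and Proposition~\ref{prop:hsaug}, integrated against $\nu(w,x)\,\dw\dx$ over $D^c\times D$, and the remainder terms are shown finite via $v(x)-u(x)=\int_D G_D(x,y)Lu(y)\,\dy$ and the mass identity $\int_D G_D(x,y)\kappa_D(x)\,\dx=1$, exactly as in the paper (where these appear in the estimate of $A_3(u)$). The only cosmetic difference is that you apply Lemma~\ref{lem:aaa}(ii) followed by (i) with $a=u(w)$, while the paper applies (iii) with $a=u(w)$, $b=u(x)$; a short check shows these give algebraically identical pointwise decompositions, and your split-by-domain-and-cancel bookkeeping is the same arithmetic as the paper's $A_1,\dots,A_4$ accounting, carried out in a slightly different order.
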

\begin{proof}
Since $u$ is bounded on $\mR^d$, we have $\int_{\mR^d} |u(x)|(1\wedge \nu(x))\, \dx < \infty$.

	 Assume first that $\hpd[u]
	 < \infty$.
	From Theorem \ref{th:main} we have $$\epd[P_D[u]] = \hpd[u].$$
By \eqref{eq:rg2} and Fubini--Tonelli,
	$$p\hpd[u]= \int_D\int_{D^c}\int _{D^c}F_p(u(w),u(z))P_D(x,z)\nu(x,w)\,\dz {\rm d}w{\rm d}x.$$
	We apply Lemma \ref{lem:aaa} (iii) to $a=u(w)$, $b=u(x)$, with $w\in D^c$ and $x\in D$, $X=u(X_{\tau_D})$, and $\mathbb E=\mathbb E^x$. Note that $\mathbb EX=P_D[u](x)$. This yields:
	\begin{align*}
		&\int_{D^c}F_p(u(w),u(z))P_D(x,z)\,{\rm d}z \\= &\int_{D^c}F_p(u(x),u(z))P_D(x,z)\,{\rm d}z + F_p(u(w),u(x))
		 + (pu(w)^{\langle p-1\rangle}-pu(x)^{\langle p-1\rangle})(u(x)-P_D[u](x)).
	\end{align*}
	After integration, we obtain
	\begin{eqnarray*}
		p\hpd[u] &=& \int_D\int_{D^c}\int_{D^c}F_p(u(x),u(z))P_D(x,z)\nu(x,w)\,\dz{\rm d}w{\rm d}x \\
		&& + \int_D\int_{D^c} F_p(u(w),u(x))\nu(x,w)\,{\rm d}w{\rm d}x\\
		&& +\int_D\int_{D^c} (pu(w)^{\langle p-1\rangle}-pu(x)^{\langle p-1\rangle})(u(x)-P_D[u](x))\,\nu(x,w)\,{\rm d}w{\rm d}x\\
		&=:& A_1(u)+A_2(u)+A_3(u).
	\end{eqnarray*}
	Note that every term above is finite. Indeed, by the boundedness of $u$,
	\begin{align*}
	|A_3(u)| \lesssim \int_D\int_{D^c} |u(x) - P_D[u](x)|\nu(x,w)\, \dw\dx.
	\end{align*}
To prove that this is finite, let $v = u -P_D[u]$. We have $Lv = Lu = f\in L^\infty(D)$ and $v= 0$ on $D^c$.
Note that $v\in C^2(D)$ and $\int_{\mR^d} |v(x)|(1\wedge \nu(x))\, \dx < \infty$, cf. \cite[Lemma 3.6]{MR4088505}.
Let $U\subset\subset D$. By Lemma~\ref{lem:dynk},
	\begin{align*}
	\mE^x v(X_{\tau_U}) - v(x) = \int_U G_U(x,y) f(y)\, \dy, \quad x\in U.
	\end{align*}
	Since $u$ is bounded on $\mR^d$, we have $\mE^x u(X_{\tau_U}) \to \mE^x u(X_{\tau_D}) = P_D[u](x)$ as $U\uparrow D$, cf. the last part of the proof of Proposition \ref{prop:hsaug}. Hence, the boundedness of $f$, the domain monotonicity, and the Dominated Convergence Theorem yield
	\begin{equation*}
	v(x) = -\int_D G_D(x,y) f(y)\, \dy, \quad x\in D.
	\end{equation*}
	This allows us to further estimate $A_3$:
	\begin{align*}
	|A_3(u)| \lesssim \int_D\int_{D^c}\int_D G_D(x,y) \nu(w,x)\,\dy\dw\dx = \int_D\int_{D^c} P_D(y,w)\, \dw\dy = |D| <\infty.
	\end{align*}
	Since $A_1(u)$ and $A_2(u)$ are nonnegative, they must be finite as well, because $\hpd[u] < \infty$.
	We then have
	\begin{align*}
	\int_{D^c}F_p(u(x),u(z))P_D(x,z)\,{\rm d}z &= \mathbb E^x F_p(u(x),u(X_{\tau_D}))\\ &= \mathbb E^x |u(X_{\tau_D})|^p - |u(x)|^p - pu(x)^{\langle p-1\rangle}(P_D[u](x) - u(x)).
	\end{align*}
	Thus, by Proposition \ref{prop:hsaug} we obtain
	\begin{align}
	A_1(u) = A_4(u) &+ p\int_D\int_{D^c}\int_D G_D(x,y)u(y)^{\langle p-1\rangle}Lu(y)\nu(x,w)\,\dy\dw\dx \label{eq:augDoug}\\
	&-p\int_D\int_{D^c} u(x)^{\langle p-1\rangle}(P_D[u](x) - u(x))\nu(x,w)\, \dw\dx,\nonumber
	\end{align}
	where $A_4(u)$ is the integral in \eqref{eq:part-two}.
	Note that
$A_2(u) + A_4(u) = p\epd[u]$. Also,
all the expressions in \eqref{eq:augDoug} are finite, see the discussion of $A_3(u)$. To finish the proof of \eqref{eq:Dnh} in the case $\hpd[u] < \infty$, we simply note that $pA_D(u) = A_1(u) - A_4(u) + A_3(u)$.
	
The situation $\hpd[u] = \infty$ remains to be considered. Since $P_D[u]$ is bounded in $D$, by arguments similar to those in the estimates of $A_3(u)$ above, we prove that $A_D(u)$ is finite. Therefore by Theorem \ref{th:main} the identity \eqref{eq:Dnh} holds
with both sides infinite.
\end{proof}

Knowing the form of the remainder $A_D(u)$ in the Douglas identity \eqref{eq:Dnh}, we may provide an example which shows that Poisson integral need not be a minimizer of $\epd$ for $p\neq 2$; it is only a quasiminimizer by Proposition~\ref{prop:Q-min}.
\begin{example}[The Poisson extension need not be a minimizer for $p\neq 2$]
\rm Let $p>2$ and consider $0<R<R_1$ such that $D\subset\subset B_R$. Define $$g_n(z) = ((|z|-R)^{-1/(p-1)} \wedge n)\textbf{1}_{B_{R_1}\setminus B_R}(z).$$ Since each $g_n$ is bounded with support separated from $D$, we have $g_n\in \xdp\cap \spc{X}{D}{}$; see the discussion following Example 2.4 in \cite{MR4088505}. By \eqref{eq:pjP} there exists $c>0$ such that
\begin{equation}\label{eq:pdsmall}
P_D(x,z) \leq c,\quad x\in D,\, z\in B_{R_1}\setminus B_R.
\end{equation}
Furthermore, for every $U\subset\subset D$ there is $\epsilon >0$ such that
\begin{equation}\label{eq:pdbig}
P_D(x,z) \geq \epsilon,\quad x\in U,\, z\in B_{R_1}\setminus B_R.
\end{equation}
For $x\in D$ we let
$$u_n(x) = G_D[1](x) + P_D[g_n](x).$$
 Obviously $u_n$ are bounded on $\mR^d$. We will verify that $G_D[1] \in C^2(D)$. For this purpose we let $f$ be a smooth, compactly supported, nonnegative function equal to 1 on $D$.
 	By the Hunt's formula and Fubini--Tonelli we get
 \begin{equation}\label{eq:GHunt}
 G_D[f](x) = G_D[1](x) = \int_{\mR^d} G(x-y)f(y)\, \dy - \mE^x\int_{\mR^d}G(X_{\tau_D},y) f(y) \, \dy, \quad x\in \mR^d.
 \end{equation}
 Here $G$ is either the potential kernel or the compensated potential kernel of $(X_t)$; see Grzywny, Kassmann and Le\.{z}aj \cite[Appendix A]{2018arXiv180703676G} for details. In particular, by \cite[Corollary A.3]{2018arXiv180703676G} and \cite[Theorem 35.4]{MR1739520} $G$ is locally integrable, thus the first term in \eqref{eq:GHunt} is finite and smooth in $D$. Since the latter term in \eqref{eq:GHunt} is a harmonic function, we get that $G_D[1]\in C^2(D)$.
In particular, by \cite[Lemma 4.10]{MR4088505} and Dynkin \cite[Lemma 5.7]{MR0193671} we have $Lu_n = -1$ in $D$.
 We are now in a position to apply Theorem \ref{th:Douglas2}. Fix open $U\subset\subset D$. We get
\begin{align}
A_D(u_n) &= -\int_D u_n(x)^{p-1}\, \dx + \int_D\int_{D^c}u_n(w)^{p-1}G_D[1](x)\nu(x,w)\, \dw\dx\nonumber\\
&= \int_D (\mE^x u_n(X_{\tau_D})^{p-1} - (\mE^x u_n(X_{\tau_D})+ G_D[1](x))^{p-1})\,\dx = \int_U + \int_{D\setminus U}.\label{eq:ADz}
\end{align}
We claim that $A_D(u_n) > 0$ for large $n$. Indeed, recall that $G_D[1](x)=\mathbb E^x \tau_D$ is bounded.
Since the integrals $\int_{D^c} g_n(x)\, \dx$ are bounded, by \eqref{eq:pdsmall}
there is $M>0$ such that $\mE^x u_n(X_{\tau_D}) < M$ for every $x\in D$ and $n\in \mathbb{N}$. Therefore the integral $\int_{D\setminus U}$ in \eqref{eq:ADz} is bounded from below, independently of~$n$.
Note that $\int_{D^c}g_n(x)^{p-1}\dx\to \infty$ as $n\to \infty$.
Thus, by \eqref{eq:pdbig} we obtain that
$\int_U\to \infty$ in \eqref{eq:ADz} as $n\to \infty$. Hence, for sufficiently large $n$ we get that $A_D(u_n) > 0$, which proves that $\epd[P_D[u_n]] > \epd [u_n]$ for some $n$, as needed. The case $p\in (1,2)$ may be handled similarly, by using $g_n(z) = ((|z|-R)^{-1} \wedge n)\textbf{1}_{B_{R_1}\setminus B_R}(z)$ and $u_n = P_D[g_n] - G_D[1]$.
\end{example}
\ar{\section{Applications to Dirichlet-to-Neumann map}\label{sec:app}}
\ar{
	In this section we adopt the assumptions of Theorem~\ref{th:main}. In addition, we assume that the set $D$ is bounded and $p\in [2,\infty)$. We define the nonlocal normal derivative as an analogue of the fractional version of Dipierro, Ros-Oton, and Valdinoci  \cite[(1.2)]{MR3651008}, see also Vondra\v{c}ek \cite{MR4245573}:
	\begin{align}\label{e.dnd}	
	\mathcal{N}f(z) = \int_D (f(z) - f(x))\nu(x,z)\, \dx.
	\end{align}
Note that the increments of $f$ are integrated on $D$, but the integral is evaluated for $z\in D^c$, if convergent.
For instance, if $f\in L^1(\mR^d,1\wedge \nu)$, then $\mathcal{N}f\in L^1_{loc}(D^c)$.}

\ar{Assume that $g\in\xdp$. Then $u=P_D[g]$ solves the Dirichlet problem \eqref{eq:Dp}. By definition, the Dirichlet-to-Neumann operator $\DN$ maps the exterior condition $g$ to the nonlocal normal derivative $h := \mathcal{N}u$. So, $u$ solves the Neumann problem
	\begin{align*}
	\left\{
	\begin{array}{ll}
	Lu=0 & \mbox{ in } D,\\
	\mathcal{N}u=h & \mbox{ on } D^c,
	\end{array}\right.
	\end{align*}
and $\DN := \mathcal{N}\circ P_D$ on $\xdp$. In fact, for almost every $z\in D^c$,
	\begin{align}
	\DN g(z)= \mathcal{N} u(z) &= \int_{D} (u(z) - u(x))\nu(x,z)\, \dx\label{eq:dtndef}\\ &= \int_{D}\int_{D^c} (u(z) - u(w))P_D(x,w)\nu(x,z)\, \dw \dx\nonumber \\ &= \int_{D^c} (u(z) - u(w))\gamma_D(z,w)\, \dw\nonumber\\ &= \int_{D^c} (g(z) - g(w))\gamma_D(z,w)\, \dw,\label{eq:explicitdtn}
	\end{align}
	where we have used the definition of $\gamma_D$, the fact that $u=P_D[g]$, and the Fubini--Tonelli theorem (justified by the estimates in the proof of Proposition~\ref{prop:weightedlp}).
	For $z\in {\rm Int}(D^c)=\Rd\setminus \overline{D}$ we let 
\begin{align}\label{e.dm}
m(z) := \int_{D^c} \gamma_D(w,z)\, \dw = \int_D\nu(x,z)\, \dx<\infty.
\end{align}
 For example, for $L = \Delta^{\alpha/2}$ and (bounded) $D$ of class $C^{1,1}$, with $\delta_D(z) := d(z,D)$ we have
	\begin{align*}
	m(z) = c_{d,\alpha}\int_D |z-x|^{-d-\alpha}\, \dx \approx \begin{cases}
	\delta_D(z)^{-\alpha},\quad &\delta_D(z)\leq 1,\\
	\delta_D(z)^{-d-\alpha},\quad &\delta_D(z)>1.
	\end{cases}
	\end{align*}
	}
\ar{Back to general $L$, we note that sharp estimates of $\gamma_D$ are known for bounded $C^{1,1}$ domains and the half-space, see \cite[Theorem~2.6 and 6.1]{MR4088505}.
We next define the \emph{normalized} Dirichlet-to-Neumann operator, for $g\in \xdp$ and $a.e.$ $z\in D^c$,
	\begin{align}\label{e.nDNo}
	\widetilde{\DN}g(z) = \frac{\DN g(z)}{m(z)} = \int_{D^c}(g(z) - g(w))\frac{\gamma_D(z,w)}{m(z)}\, \dz.
	\end{align}}	
	
\ar{In what follows we give several results for the Dirichlet-to-Neumann operator on $L^p$. In particular, we show that $\DN$ is well-defined: $\xdp\to L^p(D^c,m^{1-p})$ and $\widetilde{\DN}$ is bounded on  $L^p(D^c,m)$. We also relate the form $\hpd$ to the operator $\DN$ in \eqref{e.roH}.
	\begin{proposition}\label{prop:weightedlp}
		Assume that $g\in \xdp$. Then $\DN g\in L^p(D^c,m^{1-p})$ and $\widetilde{\DN} g\in L^p(D^c,m)$. Furthermore, there exists a constant $C$, independent of $g$, such that
		\begin{align*}
		\|\DN g\|_{L^p(D^c,m^{1-p})}^p = \|\widetilde{\DN}g\|_{L^p(D^c,m)}^p \leq C\hpd[g].
		\end{align*}
	\end{proposition}
	\begin{proof}
		Using \eqref{eq:explicitdtn} and Jensen's inequality we get
		\begin{align*}
		\int_{D^c} |\DN g(z)|^p m(z)^{1-p}\, \dz =  \int_{D^c} \bigg(\frac{|\DN g(z)|}{m(z)}\bigg)^p m(z)\, \dz \leq \int_{D^c}\int_{D^c} |g(w) - g(z)|^p\gamma_D(z,w)\, \dz \dw.
		\end{align*}
		Since $p\geq 2$, we have $|a-b|^p \leq (a-b)^2(|a|^{p-2} + |b|^{p-2})$. So, by \eqref{e.cFH}, 
		\begin{align}\label{e.ogbH}
 \int_{D^c}\int_{D^c} |g(w) - g(z)|^p\gamma_D(z,w)\, \dz \dw	
		\lesssim \hpd[g]<\infty,
		\end{align}
which ends the proof.
	\end{proof}
	\begin{proposition}\label{prop:weightedlpxd}
		If $g\in L^p(D^c,m)$, then $g\in \xdp$ and there is $C>0$, independent of $g$, such that
		\begin{align*}
		\hpd[g] \leq C\|g\|_{L^p(D^c,m)}^p.
		\end{align*}
	\end{proposition}
	\begin{proof}
		Following \cite[Remark~2.37]{2022arXiv220406793F}, we let $\widetilde{g}$ be the function $g$ extended to $D$ by 0. Then,
		\begin{align}\nonumber
		\epd[\widetilde{g}] &= \tfrac 1p\iint\limits_{(D^c\times D^c)^c} F_p(\widetilde{g}(z),\widetilde{g}(w)) \nu(z,w)\, \dz\dw = \int_D\int_{D^c}|g(z)|^p \nu(z,w)\, \dw\dz\\ &= \int_{D^c} |g(z)|^p m(z)\, \dz < \infty.\label{eq:lpm}
		\end{align}
		In particular, $\widetilde{g}\in \vdp$. By Proposition~\ref{prop:Q-min} we get that there exists a constant $C$, independent of $g$, such that $\epd[P_D[g]] \leq C\epd[\widetilde{g}]$. Using this, Theorem~\ref{th:main}, and \eqref{eq:lpm}, we find that
		\begin{align*}
		\hpd[g] = \epd[P_D[g]] \leq C\epd[\widetilde{g}] = C\int_{D^c} |g(z)|^p m(z)\, \dz,
		\end{align*}
		which proves the result.
	\end{proof}
	\begin{corollary}\label{cor:dtn}
		The normalized Dirichlet-to-Neumann map $\widetilde{\DN}$ is bounded on $L^p(D^c,m)$.
	\end{corollary}
	The following is an analogue of the formula \eqref{e:fag} below.
	\begin{proposition}
		Let $f\in L^p(D^c,m)$ and $g\in \xdp$. Then $\int_{D^c} |\DN g(z)||f(z)|^{p-1}\, \dz < \infty$ and
		\begin{align}\label{e.rDN}
		\int_{D^c} \DN g(z) f(z)^{\langle p-1\rangle}\, \dz = \tfrac 12\int_{D^c}\int_{D^c} (g(z) - g(w))(f(z)^{\langle p-1\rangle} - f(w)^{\langle p-1\rangle})\gamma_D(z,w)\, \dz\dw.
		\end{align}
		Furthermore, if $g\in L^p(D^c,m)$, then
		\begin{align}\label{e.roH}
		\int_{D^c} \DN g(z) g(z)^{\langle p-1\rangle}\, \dz = \int_{D^c} \widetilde{\DN}g(z)g(z)^{\langle p-1\rangle} m(z)\, \dz = \hpd[g].
		\end{align}
	\end{proposition}
	\begin{proof}
		By H\"older's inequality with exponents $p$ and $p' = \frac{p}{p-1}$, and by  Proposition~\ref{prop:weightedlp},
		\begin{align*}
		&\int_{D^c} |\DN g(z)||f(z)|^{p-1}\, \dz = \int_{D^c} |\DN g(z)| m(z)^{\frac{1-p}{p}} m(z)^{\frac{p-1}{p}} |f(z)|^{p-1}\, \dz\\
		\leq &\bigg(\int_{D^c} |\DN g(z)|^p m(z)^{1-p}\, \dz\bigg)^{\frac 1p} \bigg(\int_{D^c} |f(z)|^p m(z)\, \dz\bigg)^{\frac {p-1}p}<\infty.
		\end{align*}
It suffices to prove \eqref{e.rDN}.
By the symmetry of $\gamma_D$, 
		\begin{align*}
		\int_{D^c} \DN g(z) f(z)^{\langle p-1\rangle}\, \dz &= \int_{D^c}\int_{D^c} (g(z) - g(w))f(z)^{\langle p-1\rangle}\gamma_D(z,w)\, \dw\dz\\
		&=\int_{D^c}\int_{D^c} (g(z) - g(w))f(z)^{\langle p-1\rangle}\gamma_D(z,w)\, \dz\dw\\
		&=\int_{D^c}\int_{D^c} (g(w) - g(z))f(w)^{\langle p-1\rangle}\gamma_D(z,w)\, \dw\dz.
		\end{align*}
The above application of the Fubini--Tonelli theorem is justified by using H\"older's inequality with exponents $p$ and $p/(p-1)$, and \eqref{e.ogbH}; see also \eqref{e.dm}.
The first and the last lines above yield \eqref{e.rDN}.
	\end{proof}}

	\ar{Let us discuss related results for $p=2$. In \cite[Proposition~3.2]{MR4245573}, Vondra\v{c}ek shows that the normalized Dirichlet-to-Neumann operator map is bounded on $L^2(D^c,m)$; our Corollary~\ref{cor:dtn} extends this result to $L^p$. As observed by Foghem and Kassmann \cite[Remark~2.37]{2022arXiv220406793F}, the space $L^2(D^c,m)$ can be smaller than the trace space $\spc{X}{D}{}$. In \cite[Section~4.4]{2022arXiv220406793F}, the authors investigate the Dirichlet-to-Neumann operator for the equation $Lu = \lambda u + f$, where $\lambda\in \mR$ is not a Dirichlet eigenvalue of $L$ in $D$. They prove the boundedness of the Dirichlet-to-Neumann operator from the trace space into its dual. If we let $\DN_{FK}$ be the Dirichlet-to-Neumann operator defined in \cite{2022arXiv220406793F} for $\lambda = 0$ and $f=0$, then using our Douglas identity and $\widetilde{v} = u_g = P_D[g]$ in \cite[Definition~4.18]{2022arXiv220406793F}, for $g\in\spc{X}{D}{}$ we get
\begin{align*}
\langle \DN_{FK} g, g\rangle = \form{E}{D}{} [P_D[g]] = \form{H}{D}{} [g] = \frac 12\int_{D^c}\int_{D^c} (g(z) - g(w))^2\gamma_D(z,w)\,\dz\dw.
\end{align*}
Here $\langle \cdot,\cdot \rangle$ is the pairing between $\spc{X}{D}{}$ and its dual, see \cite[Section~2.6]{2022arXiv220406793F}. Then, by polarization, 
\begin{align}\label{eq:polarized}
\langle \DN_{FK} g_1,g_2\rangle  = \form{H}{D}{} (g_1,g_2) = \frac 12 \int_{D^c}\int_{D^c} (g_1(z) - g_1(w))(g_2(z) - g_2(w))\gamma_D(z,w)\,\dz\dw,
\end{align}
for $g_1,g_2\in \spc{X}{D}{}$.
Both \eqref{eq:explicitdtn} and \eqref{eq:polarized} give explicit integral representations for the Dirichlet-to-Neumann operator, which are more direct than \eqref{eq:dtndef}. They were not stated in \cite{2022arXiv220406793F,MR4245573}, although similar formulas appear in \cite[Section~3 and (4.2)]{MR4245573} and the author of \cite{MR4245573} was probably aware of the explicit versions.}

\ar{On an informal level, \eqref{eq:explicitdtn} and \eqref{eq:polarized} mean that the Dirichlet-to-Neumann map is the negative of the L\'evy-type operator on $D^c$ with jump kernel $\gamma_D$, and $\spc{H}{D}{}$ is the corresponding Dirichlet form. 
Despite being smaller than $\spc{X}{D}{2}$, the space $L^2(D^c,m)$, used by Vondra\v{c}ek, is suitable for studying the (negative of the) normalized Dirichlet-to-Neumann operator $\widetilde{\DN}$ as a generator of a Markov process on $D^c$. In fact, 
$-\widetilde{\DN}$ is the generator of the so-called trace process, see \cite[(4.2)]{MR4245573}. In this connection, the reader may compare \eqref{e.roH}, $\DN$ and $\form{H}{D}{p}$ with \eqref{e:fag}, $-L$ and $\form{E}{D}{p}$;
see also \cite[Lemma 7]{MR4372148} for a detailed discussion of $\form{E}{\Rd}{p}$ for $L=\Delta^{\alpha/2}$.
}

\section{Further discussion}\label{sec:discussion}
As usual, $D$ is a nonempty open set in $\Rd$. We define
\begin{equation}\label{eq:sp-wu}
\wdp=\bigg\{u\colon\R^d\to\R\ \bigg|\ \iint\limits_{\R^d\times\R^d\setminus
D^c\times D^c}|u(x)-u(y)|^p\nu(x,y)\,{\rm d}x{\rm d}y<\infty\bigg\},
\end{equation}
and
\[\ydp=\bigg\{g\colon D^c\to \R\ \bigg|\ \iint\limits_{D^c\times D^c}
|g(w)-g(z)|^p\gamma_D(w,z)\,{\rm d}w{\rm d}z<\infty\bigg\}.\]
\begin{proposition}\label{th:second}
If $p\geq 2$ then \eqref{eq:th61} holds true
under the assumptions on $D$ and $\nu$ from Theorem \ref{th:main}, and the Poisson extension acts from $\ydp$ to $\wdp$.

\end{proposition}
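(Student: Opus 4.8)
The plan is to bound the left‑hand side of \eqref{eq:th61} by splitting the region of integration into the \emph{cross} part $(D\times D^c)\cup(D^c\times D)$ and the \emph{interior} part $D\times D$. First one checks that $u=P_D[g]$ is well‑defined and regular $L$‑harmonic in $D$ even when merely $g\in\ydp$: for a.e.\ $w\in D^c$ one has $\int_{D^c}|g(z)|^p\gamma_D(w,z)\,\dz<\infty$, hence, since $\gamma_D(w,z)\ge c_wP_D(x_0,z)$ for a fixed $x_0\in D$ (Harnack inequality for the $L$‑harmonic function $P_D(\cdot,z)$, cf.\ \cite{MR3729529}), one gets $\int_{D^c}|g(z)|P_D(x,z)\,\dz<\infty$ for every $x\in D$; then one argues as in the proof of Theorem~\ref{th:main}, and by \eqref{eq:always-jump} $P_D(x,\cdot)\,\dz$ is the law of $X_{\tau_D}$ under $\mpr^x$. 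The cross part is then easy: for $x\in D$ and $w\in D^c$, $u(x)-g(w)=\mE^x[g(X_{\tau_D})-g(w)]$, so Jensen's inequality gives $|u(x)-g(w)|^p\le\int_{D^c}|g(z)-g(w)|^p P_D(x,z)\,\dz$; integrating against $\nu(x,w)\,\dx\,\dw$ and using $\int_D\nu(w,x)P_D(x,z)\,\dx=\gamma_D(w,z)$ from \eqref{eq:rg2} (and Fubini--Tonelli) bounds the cross part by $2\iint_{D^c\times D^c}|g(w)-g(z)|^p\gamma_D(w,z)\,\dw\,\dz$, i.e.\ by twice the right‑hand side of \eqref{eq:th61}.

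For the interior part the crude bound $|u(x)-u(y)|^p\le\iint_{D^c\times D^c}|g(z_1)-g(z_2)|^p P_D(x,z_1)P_D(y,z_2)\,\dz_1\,\dz_2$ (Jensen for $\omega_D^x\otimes\omega_D^y$) is too lossy near the diagonal, where $\nu$ is non‑integrable and the bound is blind to the interior smoothness of $u$. Writing $r_x=\dist(x,D^c)$, I would split $D\times D$ into $N=\{(x,y):|x-y|<\tfrac14 r_x\wedge\tfrac14 r_y\}$ and $F=(D\times D)\setminus N$. On $F$, after relabeling $|x-y|\ge\tfrac14 r_x$, so the $y$‑integral of $\nu(x,y)$ stays away from its singularity and the crude bound reduces matters to $\iint_F P_D(x,z_1)P_D(y,z_2)\nu(x,y)\,\dx\,\dy\lesssim\gamma_D(z_1,z_2)$, which I expect to follow from \eqref{eq:aaaa}, \eqref{eq:pjP} and domain monotonicity. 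On $N$, where $r_x\approx r_y$ and $[x,y]\subset D$, harmonicity enters: from $\int_{D^c}\nabla_\xi P_D(\xi,z)\,\dz=0$ and an interior gradient estimate $|\nabla_\xi P_D(\xi,z)|\lesssim r_\xi^{-1}P_D(\xi,z)$ for $L$‑harmonic Poisson kernels (available under \textbf{(A1)}--\textbf{(A2)} together with the Harnack inequality \cite{MR3729529}), the mean value theorem gives $|u(x)-u(y)|\lesssim |x-y|\,r_x^{-1}\int_{D^c}|g(z)-u(x)|P_D(x,z)\,\dz$. Since $|h|^p\nu(h)$ is integrable near $0$ (as $p\ge2>\beta$, cf.\ \eqref{eq:Lmc}, \eqref{eq:nuSc}), one integrates the $\nu(x,y)$‑variable over $|x-y|<\tfrac14 r_x$ and, applying Jensen twice more, reduces the $N$‑contribution to a constant times $\iint_{D^c\times D^c}|g(z_1)-g(z_2)|^p\bigl(\int_D P_D(x,z_1)P_D(x,z_2)\,w(x)\,\dx\bigr)\,\dz_1\,\dz_2$, where $w(x)$ is an explicit weight built from $r_x$ and $\nu$; it then remains to verify $\int_D P_D(x,z_1)P_D(x,z_2)\,w(x)\,\dx\lesssim\gamma_D(z_1,z_2)$, again via \eqref{eq:pjP}, \eqref{eq:aaaa} and sharp estimates for $\mE^x\tau_D$.

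I expect the main obstacle to be precisely this near‑diagonal interior estimate --- obtaining the gradient bound for $P_D(\cdot,z)$ with the correct dependence on $r_x$ up to $\partial D$ under only \textbf{(A1)}--\textbf{(A2)}, and matching the weight $w$ against the Green‑function structure of $\gamma_D$ so that the comparison holds with a \emph{uniform} constant. It is essential that this step not be routed through $\hpd[g]$: although $|u(x)-u(y)|^p\lesssim F_p(u(x),u(y))$ by Lemma~\ref{lem:p-incr} and $\epd[u]=\hpd[g]$ by \eqref{eq:ident-fp}, one cannot in general dominate $\iint_{D^c\times D^c}F_p(g(w),g(z))\gamma_D(w,z)\,\dw\,\dz$ by a constant multiple of the right‑hand side of \eqref{eq:th61} --- $F_p$ is not invariant under adding a constant to $g$, whereas both the $\wdp$‑ and $\ydp$‑forms are --- so this naive shortcut fails and the direct, harmonic‑function argument above is unavoidable. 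Once \eqref{eq:th61} is established, the mapping property is immediate: $\ydp$ and $\wdp$ are exactly the finiteness classes of the two sides of \eqref{eq:th61}, so $g\in\ydp$ forces $P_D[g]\in\wdp$.
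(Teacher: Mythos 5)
Your analysis of the cross part $(D\times D^c)\cup(D^c\times D)$ is correct and clean: $u(x)-g(w)=\mE^x[g(X_{\tau_D})-g(w)]$, Jensen, then Fubini--Tonelli plus \eqref{eq:rg2} give the bound by $2\iint_{D^c\times D^c}|g(w)-g(z)|^p\gamma_D(w,z)\,\dw\dz$. Your observation that one cannot shortcut through $\hpd[g]$ because $F_p$ is not invariant under adding constants, while the $p$-increment forms are, is also exactly right. However, for the interior part $D\times D$ your argument is a sketch with a genuine gap: the gradient estimate $|\nabla_\xi P_D(\xi,z)|\lesssim r_\xi^{-1}P_D(\xi,z)$ is not established under only \textbf{(A1)}--\textbf{(A2)} for a general bounded $D$ with (VDC), the matching of the weight $w(x)$ against $\gamma_D$ is left unverified, and the claimed off-diagonal bound $\iint_F P_D(x,z_1)P_D(y,z_2)\nu(x,y)\,\dx\dy\lesssim\gamma_D(z_1,z_2)$ is also unproved. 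You yourself flag this as ``the main obstacle,'' which it is.

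The missing idea that makes the paper's proof work, and which also explains why your diagonal decomposition is unnecessary, is the translation trick inside the Hardy--Stein identity. Starting from the right-hand side and writing $\int_{D^c}|g(w)-g(z)|^pP_D(x,z)\,\dz=\mE^x|u(X_{\tau_D})-g(w)|^p$, apply Lemma~\ref{lem:elem-p} to get
\begin{equation*}
\mE^x|u(X_{\tau_D})-g(w)|^p \gtrsim \mE^x|u(X_{\tau_D})-u(x)|^p,
\end{equation*}
then apply Proposition~\ref{prop:hardy-stein} \emph{not to $u$ but to the shifted function} $\widetilde u(\cdot)=u(\cdot)-u(x)$, which is $L$-harmonic in $D$ and satisfies $\widetilde u(x)=0$. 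The Hardy--Stein formula then reads
\begin{equation*}
\mE^x|u(X_{\tau_D})-u(x)|^p=\int_D G_D(x,y)\int_{\mR^d}F_p(\widetilde u(y),\widetilde u(z))\,\nu(y,z)\,\dz\dy,
\end{equation*}
and since $F_p(a,b)\gtrsim|b-a|^p$ for $p\ge 2$ (Lemma~\ref{lem:p-incr}) and $|\widetilde u(y)-\widetilde u(z)|=|u(y)-u(z)|$, the right side is $\gtrsim\int_D G_D(x,y)\int_{\mR^d}|u(y)-u(z)|^p\nu(y,z)\,\dz\dy$. Integrating in $\nu(w,x)\,\dw\dx$ over $D^c\times D$ and using $\int_{D^c}\int_D G_D(x,y)\nu(w,x)\,\dx\dw=1$ produces $\int_D\int_{\mR^d}|u(y)-u(z)|^p\nu(y,z)\,\dz\dy$, which dominates half of the full left-hand side of \eqref{eq:th61}, cross and interior parts together. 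The subtraction of $u(x)$ is precisely how the translation-noninvariance of $F_p$ is circumvented: the identity is applied at the level where the relevant quantity is already $\mE^x|u(X_{\tau_D})-u(x)|^p$, and only the translation-invariant lower bound $F_p\gtrsim|\cdot|^p$ is used afterwards. Your near-diagonal machinery is thus not needed, and without supplying proofs for the gradient and weight estimates your argument does not close.
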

\begin{proof}
Assume that $g\in \ydp,$ i.e., the right-hand side of \eqref{eq:th61} is finite. By a simple modification of the proof of \cite[Lemma 4.6]{MR4088505} we get that $g\in L^p(D^c, P_D(x,z)\,\dz)$ for every $x\in D$, in particular the Poisson integral
$P_D[g](x)$ converges absolutely.
By
\eqref{eq:rg2}, the right-hand side
of \eqref{eq:th61} equals
\[\int_{D^c}\int_{D^c}\int_D |g(w)-g(z)|^p \nu(w,x)P_D(x,z)\,\dx\dw\dz.\]
We use Fubini--Tonelli and consider the integral
\[\int_{D^c}|g(w)-g(z)|^pP_D(x,z)\,\dz = \mathbb E^x\left|u(X_{\tau_D})- g(w)\right|^p.\]
By Lemma
\ref{lem:elem-p} we get that for $x\in D$ and $w\in D^c$,
\[\mathbb E^x\left|u(X_{\tau_D})-g(w)\right|^p \approx \mathbb E^x|u(X_{\tau_D})-u(x)|^p
+\left|u(x)-g(w)\right|^p \geq \mathbb E^x|u(X_{\tau_D})-u(x)|^p.\]
We apply Proposition \ref{prop:hardy-stein},
to
$\widetilde{u}(z):=u(z)-u(x).$
It is $L$-harmonic on $D$ and $\widetilde u(x)=0$, therefore
\[\mathbb E^x\left|u(X_{\tau_D})-u(x)\right|^p = \int_DG_D(x,y)
\int_{\mathbb R^d}F_p(\widetilde {u}(y), \widetilde{u}(z))\nu(z,y)\,\dz\dy.\]
For $p\neq 2$ it is not true that $F_p(a+t,b+t)$ is comparable with $F_p(a,b)$, but since $p\geq 2$, by Lemma \ref{lem:p-incr} we have $F_p(a+t,b+t)\geq c |a+t-b-t|^p = c|a-b|^p$.
It follows that
\[F_p(\widetilde u(y), \widetilde u(z))\gtrsim
 |u(y)-u(z)|^p,\]
and thus
\[\mathbb E^x\left|u(X_{\tau_D})-g(w)\right|^p \gtrsim \int_DG_D(x,y)
\int_{\mathbb R^d} |u(y)-u(z)|^p\nu(z,y)\,\dz\dy.\]
We integrate the inequality on $D^c\times D$
against $\nu(w,x)\,\dw\dx$ as in \eqref{eq:part-two}, and the right-hand side is
\begin{equation*}
\int_D\int_{\mR^d} |u(x)-u(y)|^p\nu(x,y) \,\dx \dy \geq \tfrac 12\!\!\! \iint\limits_{\mR^d\times \mR^d \setminus (D^c\times D^c)} |u(x) - u(y)|^p \nu(x,y)\,\dx\dy.
\end{equation*}
The result follows.
\end{proof}
We remark that in general
\eqref{eq:th61}
fails
for $p\in (1,2)$;
see Lemma~\ref{lem:pl2z} and Example \ref{ex:th61}.

In the remainder of this section we compare $\wdp$ and $\vdp$, see \eqref{eq:vdp}, by using $C_c^\infty(\mR^d)$.
\begin{lemma}\label{lem:sf}
For every $p>1$ we have $C_c^\infty(\mR^d)\subseteq \spc{V}{\mR^d}{p}\subseteq \vdp$.
\end{lemma}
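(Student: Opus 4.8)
The plan is to prove the two inclusions separately; the second is immediate, and the first I would reduce to the quadratic case $p=2$.

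For $\spc{V}{\mR^d}{p}\subseteq\vdp$: by \eqref{eq:Ep} and \eqref{eq:fsp}, both $\epd[u]$ and $\form{E}{\mR^d}{p}[u]$ are integrals of the same nonnegative integrand $\tfrac1p H_p(u(x),u(y))\nu(x,y)$ --- nonnegative because $F_p\ge0$, hence $H_p\ge0$, cf.\ \eqref{eq:wH} --- the former over $\R^d\times\R^d\setminus D^c\times D^c$ and the latter over the larger set $\R^d\times\R^d$; therefore $\epd[u]\le\form{E}{\mR^d}{p}[u]$ for every $u\colon\R^d\to\R$, and in particular $u\in\spc{V}{\mR^d}{p}$ implies $u\in\vdp$.

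For $C_c^\infty(\mR^d)\subseteq\spc{V}{\mR^d}{p}$: fix $u\in C_c^\infty(\mR^d)$ and set $v=u^{\langle p/2\rangle}$. By \eqref{eq:apEp2} with $D=\R^d$ we have $\form{E}{\mR^d}{p}[u]\approx\form{E}{\mR^d}{}[v]$, so it is enough to prove $v\in\spc{V}{\mR^d}{}$, and for this I would invoke the inclusion $W^{1,2}(\R^d)\subseteq\spc{V}{\mR^d}{}$. The latter follows from Tonelli's theorem and the substitution $y=x+z$, which give $2\form{E}{\mR^d}{}[w]=\int_{\R^d}\|w(\cdot+z)-w\|_{L^2(\R^d)}^2\,\nu(z)\,\dz$; since $\|w(\cdot+z)-w\|_{L^2}\le\min\{|z|\,\|\nabla w\|_{L^2},\,2\|w\|_{L^2}\}$, the right-hand side is finite by \eqref{eq:Lmc}. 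It remains to check $v=u^{\langle p/2\rangle}\in W^{1,2}(\R^d)$. Now $v$ is bounded with compact support, hence lies in $L^2(\R^d)$; the function $s\mapsto s^{\langle p/2\rangle}$ is continuous, increasing and locally absolutely continuous (its derivative $\tfrac p2|s|^{p/2-1}$ is locally integrable since $p>1$), so the chain rule gives the weak gradient $\nabla v=\tfrac p2|u|^{p/2-1}\nabla u$ a.e., and by the coarea formula applied on $\{u\ne0\}$ (the set $\{u=0\}$ contributing nothing, since $\nabla u=0$ a.e.\ there or the set is Lebesgue-null),
\[
\int_{\R^d}|\nabla v|^2\,\dx=\frac{p^2}{4}\int_{\R^d}|u|^{p-2}|\nabla u|^2\,\dx=\frac{p^2}{4}\int_{\R}|t|^{p-2}\mu(t)\,\dt,\qquad\mu(t):=\int_{u^{-1}(t)}|\nabla u|\,\mathrm{d}\mathcal{H}^{d-1}.
\]
For $u\in C_c^\infty(\mR^d)$ the divergence theorem gives $\mu(t)=\int_{\{u<t\}}\Delta u\,\dx$ for a.e.\ $t$, so $0\le\mu(t)\le\|\Delta u\|_{L^1(\R^d)}$, and $\mu$ is supported in the bounded interval $[\min u,\max u]$; since $p>1$ the weight $|t|^{p-2}$ is integrable near $0$, so the last integral is finite. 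Hence $v\in W^{1,2}(\R^d)\subseteq\spc{V}{\mR^d}{}$, and the proof is complete.

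The step I expect to be the main obstacle is the finiteness of $\int_{\R^d}|u|^{p-2}|\nabla u|^2\,\dx$ when $1<p<2$: it cannot be reached from the naive estimate $F_p(u(x),u(y))\lesssim|u(x)-u(y)|^p\le\|\nabla u\|_\infty^p|x-y|^p$, because $|z|^p\nu(z)$ need not be integrable near the origin for $p<2$ (for example when $\nu(z)=c|z|^{-d-\alpha}$ with $\alpha\ge p$). The coarea formula, together with the uniform bound $\mu\le\|\Delta u\|_{L^1}$ supplied by the divergence theorem, handles this cleanly and, as a bonus, needs nothing about the zero set of $u$ --- degenerate critical points and zero sets of positive measure are covered automatically. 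For $p\ge2$ the whole argument trivialises, since $|u|^{p/2-1}$ is then bounded, so $v=u^{\langle p/2\rangle}$ is Lipschitz with compact support and belongs to $W^{1,2}(\R^d)$ without further work.
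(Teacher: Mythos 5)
Your argument is correct in outline but takes a genuinely different route from the paper, and one step deserves more care.

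The paper's proof of $C_c^\infty(\mR^d)\subseteq\spc{V}{\mR^d}{p}$ stays entirely on the side of the form $\form{E}{\mR^d}{p}$: it bounds $L\phi$, uses Fubini and the symmetry of $\nu$ (together with the substitution $x\mapsto x+z$) to integrate by parts, and arrives at the exact identity $\form{E}{\mR^d}{p}[\phi]=-\int_{\mR^d}\phi(x)^{\langle p-1\rangle}L\phi(x)\,\dx$, whose right-hand side is manifestly finite because $\phi$ and $L\phi$ are bounded and $\phi$ has compact support. Your route is different: you pass to the quadratic form via the comparison \eqref{eq:apEp2}, reduce the claim to $u^{\langle p/2\rangle}\in W^{1,2}(\mR^d)\subseteq\spc{V}{\mR^d}{}$, and handle the only nontrivial integrability (the weight $|u|^{p-2}$ near the zero set when $1<p<2$) by the coarea formula combined with the divergence theorem, which gives the clean uniform bound $0\le\mu(t)\le\|\Delta u\|_{L^1}$. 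The coarea step is correct and elegant, and your observation that it needs no structure on the zero set of $u$ is a genuine advantage of the method. The trade-off is that you only obtain the inclusion up to a multiplicative constant (via $\approx$ in \eqref{eq:apEp2}), whereas the paper gets the sharp identity \eqref{e:fag}, which is cited elsewhere and is useful in its own right.

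One point you should not leave as an assertion: the claim that ``the chain rule gives the weak gradient $\nabla v=\tfrac p2|u|^{p/2-1}\nabla u$''. When $1<p<2$, the outer function $s\mapsto s^{\langle p/2\rangle}$ is \emph{not} Lipschitz near $0$, so the standard Stampacchia chain rule for Sobolev compositions does not apply directly; local absolute continuity of $F$ alone is not enough to conclude that $F\circ u$ is weakly differentiable with the naive derivative. For $u\in C_c^\infty$ this is easily repaired by approximation: take $F_\delta(s)=s\,(s^2+\delta^2)^{(p-2)/4}$, note $|F_\delta'(s)|\le C_p|s|^{p/2-1}$, observe that $v_\delta=F_\delta\circ u\in C_c^\infty$ converges to $v$ in $L^2$, and that $\nabla v_\delta=F_\delta'(u)\nabla u$ converges pointwise a.e.\ and, by dominated convergence with the majorant $C_p|u|^{p/2-1}|\nabla u|$ (which you have just shown to lie in $L^2$ via coarea), also in $L^2$. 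This closes the gap and makes the proof complete.
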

\begin{proof}
The inclusion $\spc{V}{\mR^d}{p}\subseteq \vdp$ follows from the definition. To prove that $C_c^\infty(\mR^d)\subseteq \spc{V}{\mR^d}{p}$, we let $\phi\in C_c^\infty(\mR^d)$.
We have
$$
|\phi(x+z) + \phi(x-z) - 2\phi(x)|\leq M(1\wedge |z|^2), \quad x, z\in \Rd.
$$
It follows that $L\phi$ is bounded on $\mR^d$, cf. \eqref{eq:L-def} and \eqref{eq:Lmc}.
Thus,
\begin{equation}\label{eq:scLfi}
\int_\Rd |\phi(x)|^{p-1}|L\phi(x)|\,\dx<\infty.
\end{equation}
Furthermore, by the Dominated Convergence Theorem and the symmetry of $\nu$,
	\begin{align*}
	\int_\Rd \phi(x)^{\langle p-1\rangle}L\phi(x)\,\dx &= \tfrac 12\int_{\mR^d} \phi(x)^{\langle p-1\rangle}\lim\limits_{\epsilon \to 0^+} \int_{|z|>\epsilon}(\phi(x+z) + \phi(x-z) - 2\phi(x))\nu(z)\,\dz\dx \\
	&=\lim\limits_{\epsilon\to 0^+} \int_{\mR^d}\int_{|z|>\epsilon}\phi(x)^{\langle p-1\rangle}(\phi(x+z) - \phi(x))\nu(z)\,\dz\dx.
	\end{align*}
	By Fubini's theorem, the substitutions $z\to -z$ and $x\to x+z$, and the symmetry of $\nu$,
	\begin{align*}
	&\int_{\mR^d}\int_{|z|>\epsilon}\phi(x)^{\langle p-1\rangle}(\phi(x+z) - \phi(x))\nu(z)\,\dz\dx\\
	= &\int_{\mR^d}\int_{|z|>\epsilon}\phi(x+z)^{\langle p-1\rangle}(\phi(x) - \phi(x+z))\nu(z)\,\dz\dx\\
	= &-\tfrac12 \int_{|z|>\epsilon}\int_{\mR^d}(\phi(x+z)^{\langle p-1\rangle}-\phi(x)^{\langle p-1\rangle})(\phi(x+z) - \phi(x))\,\dx\,\nu(z)\,\dz
	\end{align*}
for every $\epsilon > 0$.
By \eqref{eq:fsp}, the Monotone Convergence Theorem and the above,
	\begin{align}
\form{E}{\mR^d}{p}[\phi]&=
\tfrac12\int_{\Rd}\int_{\mR^d}(\phi(x+z)^{\langle p-1\rangle} - \phi(x)^{\langle p-1\rangle})(\phi(x+z) - \phi(x))\nu(z)\,\dx\dz \nonumber
\\
&=-\int_\Rd \phi(x)^{\langle p-1\rangle}L\phi(x)\,\dx.\label{e:fag}
\end{align}
The result follows from \eqref{eq:scLfi} and \eqref{eq:vdp}.	
\end{proof}
The inclusion $C_c^\infty(\mR^d)\subseteq \vdp$ indicates
that the Sobolev--Bregman spaces will be useful in variational problems posed in $L^p.$

The situation with the spaces $\wdp$ is more complicated. While for $p\geq 2$ we have a result similar to that of Lemma \ref{lem:sf}, for $p\in (1,2)$ it is not so.
More precisely, we have the following two lemmas:
\begin{lemma}\label{lem:pg2au}
For $p\geq 2$ we have $C_c^\infty(\mR^d) \subseteq \spc{W}{\mR^d}{p}\subseteq \wdp$.
\end{lemma}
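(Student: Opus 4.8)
The plan is to treat the two inclusions separately. The inclusion $\spc{W}{\mR^d}{p}\subseteq\wdp$ is immediate from the definitions \eqref{eq:sp-wu}, since shrinking the region of integration from $\mR^d\times\mR^d$ to $\mR^d\times\mR^d\setminus D^c\times D^c$ only decreases the integral. So all the work lies in proving $C_c^\infty(\mR^d)\subseteq\spc{W}{\mR^d}{p}$.

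For this, the most economical route is to reduce to Lemma~\ref{lem:sf}. By Lemma~\ref{lem:p-incr}, the assumption $p\geq 2$ yields the pointwise bound $|b-a|^p\lesssim F_p(a,b)$ for all $a,b\in\R$. Integrating this inequality against $\nu(x,y)\,\dx\dy$ over $\mR^d\times\mR^d$ gives, for every $u\colon\Rd\to\R$,
\[
\iint\limits_{\mR^d\times\mR^d}|u(x)-u(y)|^p\,\nu(x,y)\,\dx\dy \lesssim \iint\limits_{\mR^d\times\mR^d}F_p(u(x),u(y))\,\nu(x,y)\,\dx\dy = p\,\form{E}{\mR^d}{p}[u].
\]
Taking $u=\phi\in C_c^\infty(\mR^d)$ and invoking Lemma~\ref{lem:sf}, which guarantees $\form{E}{\mR^d}{p}[\phi]<\infty$, we conclude $\phi\in\spc{W}{\mR^d}{p}$.

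Alternatively one can argue directly, without Lemma~\ref{lem:sf}: after the substitution $y=x+z$ the double integral becomes $\int_{\mR^d}\int_{\mR^d}|\phi(x+z)-\phi(x)|^p\nu(z)\,\dz\dx$, which we split over $\{|z|\leq 1\}$ and $\{|z|>1\}$. On the near-diagonal piece we use $|\phi(x+z)-\phi(x)|\leq\|\nabla\phi\|_\infty|z|$ together with the fact that, for fixed $z$ with $|z|\leq 1$, the integrand vanishes unless $x$ lies in a fixed bounded neighborhood of the support of $\phi$; since $p\geq 2$ and $|z|\leq 1$ force $|z|^p\leq|z|^2$, this piece is controlled by a constant times $\int_{|z|\leq 1}|z|^2\nu(z)\,\dz<\infty$, by \eqref{eq:Lmc}. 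On the far piece we use $|\phi(x+z)-\phi(x)|^p\leq 2^{p-1}(|\phi(x+z)|^p+|\phi(x)|^p)$, integrate in $x$ first to produce $2^p\|\phi\|_{L^p}^p$, and then use $\int_{|z|>1}\nu(z)\,\dz<\infty$, again from \eqref{eq:Lmc}. The only delicate point --- and the place where $p\geq 2$ is genuinely needed --- is the summability of this near-diagonal term, equivalently the inequality $|b-a|^p\lesssim F_p(a,b)$ of Lemma~\ref{lem:p-incr}; for $1<p<2$ the quantity $\int_{|z|\leq 1}|z|^p\nu(z)\,\dz$ need not be finite under \eqref{eq:Lmc}, which is exactly why the analogous inclusion fails there and is handled separately in the next lemma. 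Accordingly there is no real obstacle, and the $F_p$-based argument is preferable since it sidesteps the support bookkeeping altogether.
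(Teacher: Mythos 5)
Both of your arguments are correct, and they are genuinely different from the paper's proof. The paper argues directly: for $\phi\in C_c^\infty(\mR^d)$ with $K=\operatorname{supp}\phi$, the integrand vanishes on $K^c\times K^c$, and on the complement of $K^c\times K^c$ one has $|\phi(x)-\phi(y)|^p\lesssim 1\wedge|x-y|^p\leq 1\wedge|x-y|^2$, which is integrable against $\nu(x,y)\,\dx\dy$ restricted to a set where at least one variable lies in the bounded set $K$, by \eqref{eq:Lmc}. This is the same in spirit as your "alternative" direct argument (with the split at $|z|=1$ made explicit), just more compressed.

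Your primary route, reducing to Lemma~\ref{lem:p-incr} and Lemma~\ref{lem:sf}, is different and quite clean: for $p\geq 2$ the pointwise bound $|b-a|^p\lesssim F_p(a,b)$ turns the $\spc{W}{\mR^d}{p}$-integral into a constant times the $\form{E}{\mR^d}{p}$-form, and Lemma~\ref{lem:sf} already gives finiteness of the latter for test functions. The trade-off is dependence: your argument leans on Lemma~\ref{lem:sf}, whose proof (the integration-by-parts identity \eqref{e:fag}) is considerably heavier than what is really needed here, whereas the paper's proof is self-contained and uses only the L\'evy condition \eqref{eq:Lmc} and the compactness of $\operatorname{supp}\phi$. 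On the other hand, your route makes transparent exactly where $p\geq 2$ enters, namely the one-sided comparison $F_p\gtrsim|\cdot-\cdot|^p$, and it pairs naturally with the subsequent Lemma~\ref{lem:pl2z}, so it is a legitimate and arguably more conceptual presentation. Both are valid.
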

\begin{proof}
For $\phi\in C_c^\infty(\mR^d)$ let $K = {\rm supp}\, \phi$. Then we have $|\phi(x) - \phi(y)| = 0$ on $K^c\times K^c$ and $$|\phi(x) - \phi(y)|^p \lesssim 1\wedge |x-y|^p \leq 1\wedge |x-y|^2,\quad x,y\in\mR^d\times\mR^d\setminus K^c\times K^c.$$
It follows that $\phi\in
\spc{W}{\mR^d}{p}$.
The inclusion $\spc{W}{\mR^d}{p}\subseteq \wdp$ is clear from the definition of the spaces.
\end{proof}

\begin{lemma}\label{lem:pl2z} Let $p\in (1,2)$ and assume that for some $r>0$ we have $\nu(y) \gtrsim |y|^{-d-p}$ for $|y|<r$.
 If $u\in \wdp$ has compact support in $\mR^d$ and vanishes on $D^c$, then $u\equiv 0$.
\end{lemma}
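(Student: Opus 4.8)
The plan is to reduce the statement to the classical rigidity property of the Gagliardo $W^{1,p}$ seminorm: if a measurable function $v$ on a ball $B$ satisfies $\iint_{B\times B}|v(x)-v(y)|^p|x-y|^{-d-p}\,\dx\dy<\infty$, then $v$ is a.e.\ equal to a constant on $B$ (see, e.g., Brezis, \emph{How to recognize constant functions}, or Bourgain--Brezis--Mironescu, \emph{Another look at Sobolev spaces}). I note first that finiteness of this double integral already forces $v\in L^p(B)$: since $|x-y|^{-d-p}\ge(\diam B)^{-d-p}$ for $x,y\in B$, we get $\iint_{B\times B}|v(x)-v(y)|^p\,\dx\dy<\infty$, and evaluating the inner integral at a point $y_0$ for which it is finite yields $v\in L^p(B)$; thus the cited rigidity results apply with no a priori integrability assumption.

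First I would fix an arbitrary open ball $B\subset\mR^d$ with $\diam B<r$ and check that $u$ restricted to $B$ has finite Gagliardo energy. Decompose $B\times B$ into $Q\cap(B\times B)$ and $(D^c\times D^c)\cap(B\times B)$, where $Q=\mR^d\times\mR^d\setminus D^c\times D^c$. On the second piece $u(x)=u(y)=0$ because $u$ vanishes on $D^c$, so the integrand vanishes there. On $Q\cap(B\times B)$ we have $|x-y|<r$, hence $|x-y|^{-d-p}\le C\nu(x,y)$ by the assumed lower bound on $\nu$, and therefore
\[\iint\limits_{B\times B}\frac{|u(x)-u(y)|^p}{|x-y|^{d+p}}\,\dx\dy=\iint\limits_{Q\cap(B\times B)}\frac{|u(x)-u(y)|^p}{|x-y|^{d+p}}\,\dx\dy\le C\!\!\iint\limits_{\mR^d\times\mR^d\setminus D^c\times D^c}\!\!|u(x)-u(y)|^p\nu(x,y)\,\dx\dy<\infty,\]
the last integral being finite because $u\in\wdp$. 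By the rigidity property, $u$ is a.e.\ constant on $B$.

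Since $B$ was an arbitrary ball of diameter less than $r$ and $\mR^d$ is connected, a standard chaining argument over a covering of $\mR^d$ by such overlapping balls (two overlapping balls share a set of positive measure, hence the two a.e.-constants agree) shows that $u$ is a.e.\ equal to a single constant on all of $\mR^d$. Finally, $u$ has compact support, so this constant is $0$, i.e.\ $u\equiv0$.

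The only nontrivial ingredient is the cited $W^{1,p}$ rigidity theorem; everything else is bookkeeping. The one point deserving care is that the definition of $\wdp$ integrates only over $Q$ and not over all of $\mR^d\times\mR^d$, but the omitted region $D^c\times D^c$ contributes nothing precisely because $u$ vanishes there — this is exactly where the hypothesis on $u$ is used. I expect the main (essentially only) obstacle to be quoting the rigidity statement in the right form; I also note it genuinely needs $p<2$ for consistency with $\nu$ being a L\'evy density, since for $p\ge2$ the bound $\nu(y)\gtrsim|y|^{-d-p}$ near $0$ would contradict $\int(|z|^2\wedge1)\nu(z)\,\dz<\infty$, in line with the hypothesis $p\in(1,2)$.
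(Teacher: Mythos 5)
Your proof is correct, but it follows a different route from the one the paper presents. The paper's own proof is Fourier-analytic: after reducing to bounded $u$ (so that $u\in L^p\cap L^2$), it writes the energy as an integral over increments $u(\cdot)-u(\cdot+y)$, applies the Hausdorff--Young inequality to get a lower bound of the form $\int|\widehat u(\xi)|^{p'}\int|1-e^{2\pi i\xi y}|^p\nu(y)\,\dy\,d\xi$, and then observes that the inner integral diverges for every $\xi\neq 0$ because $\nu(y)\gtrsim|y|^{-d-p}$ near the origin and $|1-e^{2\pi i\xi y}|\geq|\sin 2\pi\xi y|$; this forces $\widehat u\equiv 0$. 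Your argument instead quotes the rigidity theorem of Brezis (and De Marco--Mariconda--Solimini) for the Gagliardo seminorm of order $1$, applied on small balls, and then chains the local constants. Interestingly, the paper explicitly acknowledges your route in the remark following the lemma, saying that the result ``follows by taking $\Omega=\mR^d$ in the aforementioned results, but we present a different proof.'' Your ball-by-ball version is a sensible way to implement that reduction: it uses the hypothesis $\nu\gtrsim|\cdot|^{-d-p}$ only where it is available ($|x-y|<r$) and sidesteps the need to control the off-diagonal part $|x-y|\geq r$ that would arise if one worked directly on $\Omega=\mR^d$, whereas the paper's Fourier proof is self-contained and does not rely on the external rigidity theorem. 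Both are valid; the paper's is more elementary in the sense of not invoking a black-box Sobolev-space result, while yours is shorter given that result. One small polish: when you extract $v\in L^p(B)$ from the finite double integral, it is worth saying explicitly that $|v(y_0)|<\infty$ for a.e.\ $y_0$ (since $v$ is real-valued) so that the inner integral at $y_0$ controls $\int_B|v|^p$; you gesture at this but it is the only point where a reader might pause.
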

Results of this type are well-known for the spaces with integration over $D\times D$, where $D$ is connected.
Brezis \cite[Proposition 2]{MR1942116} shows that any measurable function must be constant in this case; a simpler proof of this fact was given by De Marco, Mariconda and Solimini \cite[Theorem~4.1]{MR2426913}. Lemma \ref{lem:pl2z} follows by taking $\Omega = \mR^d$ in the aforementioned results, but we present a different proof. Such facts also hold true in the context of metric spaces, see, e.g., Pietruska-Pa\l{}uba \cite{MR2102059}.
We will see in the proof of Lemma~\ref{lem:pl2z} that the result reduces to that with $D=\Rd$.
\begin{proof}[Proof of Lemma \ref{lem:pl2z}]
We may assume that $u$ is bounded, because the $p$-increments of $(0\vee u)\wedge 1$ do not exceed those of $u$. Thus, since $u$ is compactly supported, we get that
$u\in L^p(\mR^d)\cap L^2(\mR^d)$. Let
	\begin{equation*}
	\widehat{u}(\xi) = \int_{\mR^d} u(x) e^{-2\pi i \xi x}\, \dx, \quad \xi\in\mR^d.
	\end{equation*}
The Hausdorff--Young inequality asserts that for
$u\in L^p(\mR^d)$ we have
	\begin{equation}\|u\|_{p} \geq \|\widehat{u}\|_{p'},\label{eq:H-Y}
	\end{equation} where $p'=\frac p{p-1}$,
see, e.g., Grafakos \cite[Proposition 2.2.16]{MR2445437}.
	We
	estimate the left-hand side of \eqref{eq:th61} by using \eqref{eq:H-Y}:
	\begin{align*}
	\iint\limits_{\mR^d\times\mR^d\setminus D^c\times D^c} |u(x) - u(y)|^p\nu(x,y)\,\dx\dy
	&=\int_{\mR^d}\int_{\mR^d} |u(x) - u(x+y)|^p\nu(y)\,\dx\dy\\[-10pt]
	&\geq \int_{\mR^d}\bigg(\int_{\mR^d} |(u(\cdot) - u(\cdot + y))^{\wedge}(\xi)|^{p'}\, {\rm d}\xi\bigg)^{\frac p{p'}}\nu(y)\, \dy\\
	&=\int_{\mR^d}\bigg(\int_{\mR^d} |1 - e^{-2\pi i \xi y}|^{p'} |\widehat{u}(\xi)|^{p'}\, {\rm d}\xi\bigg)^{\frac p{p'}}\nu(y)\,\dy.
	\end{align*}
	By \eqref{eq:H-Y}, $|\widehat{u}(\xi)|^{p'} \, {\rm d}\xi$ is a finite measure on $\mR^d$. As we have $p/p' < 1$, by Jensen and Fubini--Tonelli,
	\begin{align*}
	\int_{\mR^d}\bigg(\int_{\mR^d} |1 - e^{-2\pi i \xi y}|^{p'} |\widehat{u}(\xi)|^{p'}\, {\rm d}\xi\bigg)^{\frac p{p'}}\nu(y)\,\dy &\gtrsim \int_{\mR^d} \nu(y)\int_{\mR^d}|1- e^{-2\pi i \xi y}|^p |\widehat{u}(\xi)|^{p'}\, {\rm d}\xi \dy\\
	&=\int_{\mR^d}|\widehat{u}(\xi)|^{p'}\int_{\mR^d}|1 - e^{2\pi i \xi y}|^p\nu(y)\, \dy {\rm d}\xi.
	\end{align*}
	Since $|1 - e^{2\pi i\xi y}| \geq |\sin 2\pi \xi y|$ and $\nu(y) \gtrsim |y|^{-d-p}$ for small $|y|$, the integral is infinite, unless $u= 0$ a.e. in $\mR^d$.
\end{proof}
As a comment
to Lemmas~\ref{lem:sf} and \ref{lem:pl2z}
we recall that $\vdp$ is defined in terms of $F_p$. When $a$ is close to $b$ then, regardless of $p>1$,
the Bregman divergence $F_p(a,b)$ is of order $(b-a)^2$ rather than $|b-a|^p$. Thus $\vdp$ agrees with the L\'evy measure condition \eqref{eq:Lmc} better than $\wdp$ does.

The following example indicates that the scale of linear spaces $\wdp$ may not be suitable for analysis of harmonic functions when $p\leq 2$:
\begin{example}\label{ex:th61}
{\rm
Let $\nu$ and $p$ be as in Lemma \ref{lem:pl2z}. Let $B=B(0,1)$ and assume that $D$ is bounded and $\dist (D,B) > 0$.
Then there is
$g\in \ydp$ such that $u:=P_D[g]\notin \wdp$, i.e.,
\begin{equation}\label{eq:notinw}
\iint\limits_{\mR^d\times \mR^d \setminus D^c\times D^c} |u(x) - u(y)|^p \nu(x,y)\,\dx\dy = \infty.
\end{equation}

Let $g(z)=\textbf{1}_{B}(z)$ for $z\in D^c$. Then $g\in \ydp$, cf. the arguments following \cite[Example 2.4]{MR4088505}. Clearly, $u$ is bounded in $D$.
	By the positivity of $P_D$ \cite[Lemma 2.2]{MR3729529}, $u(x) > 0$ for every $x\in D$.
Of course, $B$, $D^c\setminus B = B^c\setminus D$ and $D$ form a partition of $\Rd$. Therefore their Cartesian products partition $\Rd\times \Rd$; in fact also $B^c\times B^c$ and $\mR^d\times \mR^d \setminus D^c\times D^c$ (see below).
	Since $u$ vanishes on $D^c\setminus B$, $u(x)-u(y)$ vanishes on $(D^c\setminus B)\times (D^c\setminus B)$. It follows that
	\begin{equation}\label{eq:Bc}\int_{B^c} \int_{B^c} |u(x) - u(y)|^p \nu(x,y)\,\dx\dy \leq \iint\limits_{\mR^d\times \mR^d \setminus D^c\times D^c} |u(x) - u(y)|^p \nu(x,y)\,\dx\dy.
	\end{equation}
Define $\widetilde{u}=u$ on $B^c$ and $\widetilde{u}=0$ on $B$. Then, $\widetilde u=u$ on $D$ and $\widetilde u = 0$ on $D^c$, and
	\begin{align*}
	&\iint\limits_{\mR^d\times \mR^d\setminus D^c\times D^c}|\widetilde{u}(x) - \widetilde{u}(y)|^p \nu(x,y)\, \dx\dy = \int_D\int_D +\ \int_D\int_{D^c\setminus B}+\ \int_{D^c\setminus B}\int_D +\ \int_B\int_D+\ \int_D\int_B\\
	&=\int_{B^c}\int_{B^c} |u(x) - u(y)|^p \nu(x,y)\, \dx\dy+ 2\int_D|u(y)|^p\int_B\nu(x,y)\, \dx\dy.
	\end{align*}
	By the boundedness of $u$, the boundedness of $D$ and the separation of $D$ and $B$, the last integral is finite. Furthermore, since $\widetilde{u}$ is not constant and vanishes on $D^c$, the left-hand side is infinite by Lemma \ref{lem:pl2z}. Therefore the left-hand side of \eqref{eq:Bc} is infinite, which yields \eqref{eq:notinw}.}
\end{example}

\end{document}